\newtheorem{theorem}{Theorem}[section]
\newtheorem{lemma}[theorem]{Lemma}
\newtheorem{proposition}[theorem]{Proposition}
\newtheorem{corollary}[theorem]{Corollary}
\newtheorem{definition}[theorem]{Definition}
\newtheorem{example}[theorem]{Example}
\newtheorem{problem}[theorem]{Problem}
\newtheorem{remark}[theorem]{Remark}
\newtheorem{assumption}[theorem]{Assumption}
\newcommand{\Z}{{\mathbb Z}}
\newcommand{\Q}{{\mathbb Q}}
\newcommand{\Zp}{{\mathbb Z}_p}
\newcommand{\Qp}{{\mathbb Q}_p}
\newcommand{\va}{{\bf a}}
\newcommand{\vb }{{\bf b}}
\newcommand{\vc}{{\bf c}}
\newcommand{\ve}{{\bf e}}
\newcommand{\vx}{{\bf x}}
\newcommand{\vy}{{\bf y}}
\newcommand{\vzero}{{\bf 0}}
\newcommand{\diag}{{\rm diag}}
\newcommand{\bs}{\backslash}
\newcommand{\Aut}{{\rm Aut}}
\newcommand{\End}{{\rm End}}
\newcommand{\abgr}{{\mathcal M}}
\newcommand{\mo}{\Delta}
\newcommand{\tmo}{\tilde{\Delta}}
\newcommand{\gr}{\Gamma}
\newcommand{\grsupa}{\gr_A} 
\newcommand{\grsuba}{\gr^A}
\newcommand{\tgr}{\tilde{\Gamma}}
\newcommand{\tR}{\tilde{R}}
\newcommand{\tP}{\tilde{P}}
\newcommand{\lp}{L_p}
\newcommand{\Hn}{\mathcal{H}^{(n)}}
\newcommand{\Ho}{\mathcal{H}^{(1)}}
\newcommand{\rlp}{R_{L_p}}
\newcommand{\plp}{P_{L_p}}
\newcommand{\rHnp}{R_{\mathcal{H}^{(n)}_p}}
\newcommand{\pHnp}{P_{\mathcal{H}^{(n)}_p}}
\newcommand{\pHop}{P_{\mathcal{H}^{(1)}_p}}
\newcommand{\pHtp}{P_{\mathcal{H}^{(2)}_p}}
\newcommand{\glp}{G_{L_p}}
\newcommand{\dlp}{\Delta_{L_p}}
\newcommand{\gamlp}{\Gamma_{L_p}}
\newcommand{\tlp}{T_{L_p}}
\newcounter{assumpcnt}
\begin{document}

\markboth{F.~Hyodo}{A noncommutative Hecke ring and the Hecke series for $GSp_4$}

%

%

\title{A formal power series over a noncommutative Hecke ring and the rationality of the Hecke series for $GSp_4$
}

\author{Fumitake Hyodo
}


%

\date{}

\maketitle


\begin{abstract}
The present paper studies Hecke rings derived by the automorphism groups of certain algebras $\lp$ over the ring of $p$-adic integers.
Our previous work considered the case where $\lp$ is the Heisenberg Lie algebra (of dimension 3) over the ring of $p$-adic integers. Although this Hecke ring is noncommutative, we showed that a formal power series with coefficients in this Hecke ring satisfies an identity similar to the rationality of the Hecke series for $GL_2$ due to E.~Hecke.
In the present paper, we establish an analogous result in the case of the Heisenberg Lie algebra of dimension 5 over the ring of $p$-adic integers. In this case, our identity is similar to the rationality of the Hecke series for $GSp_4$, due to G.~Shimura.
\end{abstract}



%

\section{Introduction}	
{
The present paper is a continuation of our previous works \cite{Hy}, \cite{Hy2} and \cite{Hy4}.
We discuss identities for formal power series
with coefficients in certain noncommutative Hecke rings.
Our study is related to the work due to \cite[Hecke]{He}, \cite[Shimura]{S1} and \cite[Andorianov]{A1}. 
Let $p$ be a fixed prime number. They showed that, for each positive integer $n$, the Hecke series $P_n(X)$ for the general symplectic group $GSp_{2n}(\Qp)$ of genus $n$ over the $p$-adic field $\Qp$ has rationality as follows:

\begin{theorem}[Rationality Theorem, \cite{A1}, \cite{He}, \cite{S1}]
\label{thm:Rationality Theorem}
Denote by $R_n$ the Hecke ring associated with $GSp_{2n}(\Qp)$.
Then there exist elements $q^{(n)}_1,...,q^{(n)}_{2^n}$ of $R_n$, and a polynomial $g^{(n)}(X) \in R_n[X]$ such that
\begin{equation}\label{HAeq}
\sum_{i=0}^{2^n} q^{(n)}_i X^i P_n(X)  = g^{(n)}(X),
\end{equation}
where $q^{(n)}_{0} = 1$.
\end{theorem}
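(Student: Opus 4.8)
The plan is to reduce the assertion to an identity in a commutative ring of symmetric functions by means of the Satake isomorphism, and there to recognize the degree-$2^n$ denominator as the characteristic polynomial of the spin representation of the dual group. Concretely, after tensoring with $\mathbb{C}$ the Satake homomorphism furnishes an isomorphism
$$
\mathcal{S}\colon R_n \otimes \mathbb{C} \;\xrightarrow{\ \sim\ }\; \mathbb{C}[x_0^{\pm 1}, x_1^{\pm 1},\dots,x_n^{\pm 1}]^{W},
$$
where $W = \{\pm 1\}^n \rtimes S_n$ is the Weyl group of $GSp_{2n}$, acting by permuting $x_1,\dots,x_n$ together with the sign changes $s_i\colon x_0 \mapsto x_0 x_i,\ x_i \mapsto x_i^{-1}$, and where $x_0$ records the similitude factor. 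I would first pass through $\mathcal{S}$ and work entirely on the invariant side, where the coefficients $q^{(n)}_i$ are produced as explicit symmetric functions and then pulled back.

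The heart of the argument is a closed-form evaluation of the image of the Hecke series $P_n(X)=\sum_{k\ge 0}T(p^k)X^k$, the generating series of the double-coset operators of similitude $p^k$. Writing the $2^n$ spin monomials
$$
\mu_{\varepsilon} \;=\; x_0 \, x_1^{\varepsilon_1}\cdots x_n^{\varepsilon_n}, \qquad \varepsilon = (\varepsilon_1,\dots,\varepsilon_n)\in\{0,1\}^n,
$$
one checks directly from the formulas for $s_i$ that $W$ permutes $\{\mu_\varepsilon\}$ transitively (each $s_i$ flips the $i$-th coordinate of $\varepsilon$), so this is a single Weyl orbit of size $2^n$. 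I would then prove that
$$
\mathcal{S}\!\left(P_n(X)\right) \;=\; \frac{N(X)}{\prod_{\varepsilon\in\{0,1\}^n}\bigl(1 - \mu_\varepsilon X\bigr)}
$$
for a suitable polynomial $N(X)$ with $W$-invariant coefficients, by determining the Satake transform of each $T(p^k)$ — via the explicit coset decomposition of the similitude-$p^k$ double cosets of $GSp_{2n}(\Qp)$ — and summing the resulting series. Because $\{\mu_\varepsilon\}$ is a Weyl orbit, the denominator is $W$-invariant, and expanding it gives
$$
\prod_{\varepsilon\in\{0,1\}^n}\bigl(1 - \mu_\varepsilon X\bigr) \;=\; \sum_{i=0}^{2^n} (-1)^i\, e_i\bigl(\{\mu_\varepsilon\}\bigr)\, X^i,
$$
with each elementary symmetric function $e_i(\{\mu_\varepsilon\})$ lying in the ring of $W$-invariants.

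With this key identity in hand the theorem follows formally. Since $\mathcal{S}$ is an isomorphism onto the invariants, I set $q^{(n)}_i := (-1)^i\,\mathcal{S}^{-1}\bigl(e_i(\{\mu_\varepsilon\})\bigr)$, so that $q^{(n)}_0 = 1$; multiplying the displayed rational expression through by its polynomial denominator and applying $\mathcal{S}^{-1}$ to both sides yields
$$
\sum_{i=0}^{2^n} q^{(n)}_i X^i\, P_n(X) \;=\; g^{(n)}(X) := \mathcal{S}^{-1}\bigl(N(X)\bigr),
$$
which is the asserted identity in $R_n[X]$.

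The two points demanding real work are the following. The main obstacle is the explicit spherical-function computation that produces the closed form above: this is the content of the classical coset manipulations of Andrianov and Shimura, and it can equivalently be extracted from Macdonald's formula for the Satake transform of the spherical functions on $GSp_{2n}(\Qp)$. The second point is integrality, namely that the invariants $e_i(\{\mu_\varepsilon\})$ and the numerator $N(X)$ lie in the image of the \emph{integral} Hecke ring $R_n$ rather than merely of $R_n\otimes\mathbb{C}$; I would verify this by exhibiting the relevant operators as $\mathbb{Z}$-linear combinations of double cosets, so that the $q^{(n)}_i$ and $g^{(n)}(X)$ indeed have coefficients in $R_n$ as claimed.
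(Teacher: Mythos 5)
First, a point of reference: the paper does not prove this statement at all --- it is quoted verbatim as a classical result, with the proof residing in the cited works (Hecke for $n=1$, Shimura for $n=2$, Andrianov for general $n$), all of which proceed by explicit double-coset decompositions of the similitude-$p^k$ cosets and direct summation of the resulting generating series inside (the polynomial-ring image of) the Hecke ring. Your Satake-theoretic framing is a legitimate modern reformulation, and the formal skeleton is sound: the monomials $\mu_\varepsilon = x_0x_1^{\varepsilon_1}\cdots x_n^{\varepsilon_n}$ do form a single $W$-orbit of size $2^n$ under the sign changes $x_0\mapsto x_0x_i$, $x_i\mapsto x_i^{-1}$, so the product $\prod_\varepsilon(1-\mu_\varepsilon X)$ has $W$-invariant coefficients, and pulling back the elementary symmetric functions would produce the $q_i^{(n)}$ with $q_0^{(n)}=1$.

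However, as a proof your proposal has a genuine gap, and you name it yourself: the assertion that $\mathcal{S}(P_n(X))$ equals $N(X)/\prod_\varepsilon(1-\mu_\varepsilon X)$ with \emph{polynomial} numerator is precisely the content of the theorem, transported to the invariant ring. Deferring it to ``the classical coset manipulations of Andrianov and Shimura'' or to Macdonald's formula is circular as written --- those computations \emph{are} the proof being asked for, and nothing in your outline supplies the summation of the Satake transforms of the $T(p^k)$ that makes the denominator clear the series. The same holds for the second deferred point, integrality: the Satake isomorphism you invoke is over $\mathbb{C}$ (and its standard normalization involves half-integral powers of $p$), so your construction a priori yields $q_i^{(n)} \in R_n\otimes\mathbb{C}$ only; descending to the integral Hecke ring requires either Andrianov's integrally-defined spherical map or an explicit exhibition of the $q_i^{(n)}$ as $\mathbb{Z}$-combinations of double cosets (compare the explicit formulas for $n=1,2$ in Theorem \ref{theorem:rationality} of the paper, e.g.\ $q_2 = pT(\diag(1,p,p^2,p)) + p(p^2+1)\langle p\rangle$ for $n=2$), which you promise but do not carry out. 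In short: correct strategy and correct identification of the denominator as the spin $L$-factor, but the two steps you flag as ``demanding real work'' are exactly the theorem, so the attempt is a plan rather than a proof.
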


The above theorem is established by \cite{He} for the case $n=1$,
by \cite{S1} for the case $n=2$, and by \cite{A1} for any $n\geq 3$.
Furthermore, it is known that $R_n$ is isomorphic to the (commutative) polynomial ring of $(n+1)$ variables. Thus $\{q_i^{(n)}\}_i$ and $g^{(n)}(X)$ are unique.

Throughout the present paper, algebra implies an abelian group with a bi-additive product (e.g., an associative algebra, a Lie algebra). Let $L$ be an algebra that is free of finite rank as an abelian group, and let $\lp = L \otimes \Z_p$. In \cite{Hy2}, the Hecke rings $\rlp$ derived by the automorphism groups of $\lp$ were introduced, and in \cite{Hy4}, the formal power series $\plp(X)$ with coefficients in the Hecke rings were defined.
We call $\plp(X)$ and $\rlp$ the local Hecke series and the local Hecke ring associated with $L$, respectively.
%

In the present paper, we focus on the case where $L$ is the Heisenberg Lie algebra $\Hn$ over $\Z$ of dimension $2n+1$. Here, the ring homomorphisms $s_n:R_n \to \rHnp$, $\phi_n: \rHnp \to R_n$ and $\theta_n: \rHnp \to  \rHnp$ satisfying the following property are constructed. They are presented in Sections \ref{section:BasicTheory} and \ref{section:Heisenberg Lie algebras}.

\begin{proposition}[Proposition \ref{prop:Relation:Heisenberg}, 
Corollary \ref{corollary:phiDtD}]
\label{prop:mainprop}
The ring homomorphisms $\phi_n$, $s_n$ and $\theta_n$ satisfy the following properties:
\begin{enumerate}
\setlength{\itemsep}{5pt}
\item
$\phi_n \circ s_n = id_{R_n}$. 
\item
$\phi_n \circ \theta_n = \phi_n$.
\item
$\pHnp^{\phi_n}(X) = P_n(p^{n}X^{n+1})$.
\end{enumerate}
\end{proposition}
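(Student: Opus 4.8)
The plan is to read off all three identities from the block description of Heisenberg endomorphisms. Fix a splitting $\lp = V \oplus Z$ with $V = \Hn\otimes\Z_p / Z$ the abelianization, a symplectic $\Z_p$-module of rank $2n$, and $Z$ the rank-one center. Because the bracket realizes the symplectic form with values in $Z$, any endomorphism of $\lp$ preserves $Z$ and acts on $V$ as a symplectic similitude; concretely it has the form $g = \bigl(\begin{smallmatrix} A & 0\\ c & p^\delta\end{smallmatrix}\bigr)$ with $A\in GSp_{2n}$ of similitude $p^\delta$, center scaling $p^\delta$, and coupling $c\in \mathrm{Hom}(V,Z)\cong\Z_p^{2n}$. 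Then $\phi_n$ is the ring homomorphism induced by the projection $\pi\colon g\mapsto A$ on the level of right cosets, and $s_n$ is the block-diagonal lift $A\mapsto \bigl(\begin{smallmatrix}A&0\\0&p^\delta\end{smallmatrix}\bigr)$.

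For (1), the key computation is that the right-coset degree of the Heisenberg double coset $K\,s_n(A)\,K$ equals that of $K'AK'$, where $K=\Aut(\lp)\cong GSp_{2n}(\Z_p)\ltimes\Z_p^{2n}$ and $K'=GSp_{2n}(\Z_p)$: in $g\,k\,g^{-1}$ with $g=s_n(A)$ the coupling condition reads $p^\delta d A^{-1}\in\Z_p^{2n}$, which is vacuous since $A^{-1}=p^{-\delta}A^{*}$ with $A^{*}$ integral. Hence the right cosets of $K\,s_n(A)\,K$ surject onto, and by the equality of degrees biject with, those of $K'AK'$, so $\phi_n(s_n([K'AK']))=[K'AK']$ and $\phi_n\circ s_n=\mathrm{id}_{R_n}$ after checking on the double-coset generators of the polynomial ring $R_n$. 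For (2), $\theta_n$ moves a Heisenberg double coset to another one lying over the same $GSp_{2n}$ double coset, altering only the coupling $c$ and the center, which are the data invisible to $\pi$; so the $\pi$-images of its right cosets are unchanged and $\phi_n\circ\theta_n=\phi_n$, again verified on generators.

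Part (3) is the substance. Since $\det_{\lp}g=\det_V(A)\cdot p^\delta=p^{n\delta}\cdot p^\delta=p^{(n+1)\delta}$ independently of $c$, every double coset of $V$-similitude $p^\delta$ sits in degree $(n+1)\delta$ of $\pHnp(X)$; writing $\pHnp(X)=\sum_{\delta\ge 0}c_\delta X^{(n+1)\delta}$ with $c_\delta$ the sum of all Heisenberg double cosets of similitude $p^\delta$ already accounts for the substitution $X\mapsto X^{n+1}$. To evaluate $\phi_n(c_\delta)$ I pass to right cosets: over a fixed $GSp_{2n}$ right coset $K'A_j$ the Heisenberg right cosets $K\bigl(\begin{smallmatrix}A_j&0\\c&p^\delta\end{smallmatrix}\bigr)$ are classified by $c$ modulo $\Z_p^{2n}A_j$, and their number is $[\Z_p^{2n}:\Z_p^{2n}A_j]=p^{v_p(\det A_j)}=p^{n\delta}$. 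Thus each $GSp_{2n}$ right coset is the image of exactly $p^{n\delta}$ Heisenberg right cosets, whence $\phi_n(c_\delta)=p^{n\delta}\,\mathbf{T}(p^\delta)$, where $\mathbf{T}(p^\delta)$ is the coefficient of $X^\delta$ in $P_n(X)=\sum_\delta \mathbf{T}(p^\delta)X^\delta$. Summing yields $\pHnp^{\phi_n}(X)=\sum_\delta p^{n\delta}\mathbf{T}(p^\delta)X^{(n+1)\delta}=P_n(p^{n}X^{n+1})$; properties (1) and (2) furnish the same conclusion structurally, by rewriting $c_\delta$ in terms of the block-diagonal lifts $s_n(\mathbf{T}(p^\delta))$ and discarding the $\theta_n$-twist on the coupling.

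The main obstacle is this coset count, together with the left/right asymmetry it conceals: the same double coset $K\,s_n(A)\,K$ has right-coset degree $\deg_{GSp}(K'AK')$ but left-coset degree $p^{n\delta}\deg_{GSp}(K'AK')$, so one must fix the convention under which $\phi_n$ and $\pHnp(X)$ are defined and then verify that the count $p^{n\delta}=\det_V(A_j)$ is genuinely uniform across all double cosets of a given similitude. I would secure this by reducing $A_j$ to its symplectic elementary divisors and checking independence of the representative, and I would cross-check the final formula against the established case $n=1$, which returns $P_1(pX^2)$ of \cite{Hy2}, and against the $GSp_4$ normalization for $n=2$.
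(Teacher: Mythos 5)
Your proposal gets the numerology right but rests on a definition of $\phi_n$ that the paper does not use, and the gap this creates is genuine. In the paper, $\phi_n$ is not defined as a pushforward along the projection $\pi\colon(A,\va)\mapsto A$ on cosets; it is defined as $s^{-1}\circ\theta^{\infty}$, where $\theta$ is the ring endomorphism determined by $\tilde{T}(A,\va)\tilde{T}(pE,\vzero)=\tilde{T}(pE,\vzero)\tilde{T}(A,\va)^{\theta}$ (legitimate because $\tilde{T}(pE,\vzero)$ is a left non-zero-divisor), and $\theta^{\infty}$ is the stable limit of its iterates, which exists because $p^{m}\va\in A\abgr$ for $m$ large. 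With that construction, properties (1) and (2) are immediate and multiplicativity of $\phi_n$ is inherited from $\theta$. Your pushforward map does agree with the paper's $\phi_n$ on basis elements --- both send $\tilde{T}(A,\va)$ to $\left|\gr^A\ast(\va\bmod A\abgr)\right|\,T(A)$ --- but you never show that your map is a ring homomorphism (coset pushforwards are not multiplicative in general), nor that the block-diagonal lift $s_n$ is one (the paper needs Lemmas \ref{mult1to11} and \ref{mult1to12} for exactly this), so your part (1) is an assertion rather than a proof.

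More seriously, your argument for (2) fails as stated. The endomorphism $\theta_n$ is not the bare substitution $\tilde{T}(A,\va)\mapsto\tilde{T}(A,p\va)$: by Corollary \ref{cor:FormulaTheta} it carries the normalizing scalar $\left|\gr^A\ast(\va\bmod A\abgr)\right|/\left|\gr^A\ast(p\va\bmod A\abgr)\right|$. Under your own fiber-counting definition of $\phi_n$, the double cosets $\tgr(A,\va)\tgr$ and $\tgr(A,p\va)\tgr$ lie over each $GSp_{2n}$ coset with multiplicities equal to the two orbit sizes, which in general differ ($\va$ primitive versus $p\va$ imprimitive; the nontrivial scalars in Lemma \ref{lemma:Thetheta} witness this), so your claim that ``the $\pi$-images of its right cosets are unchanged'' is false; the identity $\phi_n\circ\theta_n=\phi_n$ holds precisely because the scalar in $\theta_n$ compensates the change in fiber cardinality, an orbit count entirely absent from your proposal. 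Part (3), by contrast, is essentially sound once $\phi_n$ is in place: your count of $p^{n\delta}$ cosets in the fiber (on the correct side --- your left/right caveat is real, since the other side gives $p^{2n\delta}$ and would break everything) is exactly the paper's $\tilde{T}(A)^{\phi}=\left|\abgr/A\abgr\right|T(A)=p^{n\,v_p(\mu(A))}T(A)$, obtained there from Proposition \ref{Thetheta:tT(A)theta} applied to $\theta_{C_0^m}$ together with $(\det A)^2=\mu(A)^{2n}$, and your degree bookkeeping $\det=p^{(n+1)\delta}$ is the paper's $\pHnp(X)=\tP_n(X^{n+1})$, giving $\pHnp^{\phi_n}(X)=P_n(p^{n}X^{n+1})$ as required.
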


We shall drop the subscript $n$ from $\phi_n$, $s_n$ and $\theta_n$ when no confusion can occur. 

Our previous work \cite{Hy} shows the following identity as well as the noncommutativity of the coefficients of the local Hecke series associated with the Heisenberg Lie algebra $\Ho$ of dimension $3$.

\begin{theorem}[{\cite[Theorem 7.8]{Hy}}]
We set $P(X) = \pHop(X)$, $g(X)=g^{(1)}(X)$ and $q_i = q^{(1)}_i$ for $i=0,1,2$.
Then we have the following identity.
\begin{equation}\label{eq:previous}
P^{\theta^2}(X) + q^s_1 Y P^{\theta}(X) + q^s_2 Y^2 P(X) = g^s(Y),
\end{equation}
where $Y = pX^{2}$. 
\end{theorem}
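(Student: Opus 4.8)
The plan is to push the desired identity through the ring homomorphism $\phi = \phi_1 : \rHop \to R_1$, where it collapses onto the $GL_2$ Rationality Theorem, and then to lift the result back along the section $s = s_1$. The homomorphism $\phi$ discards the noncommutative information but retains exactly the $GL_2$-content recorded in Proposition \ref{prop:mainprop}(3), while $s$ furnishes a one-sided inverse by Proposition \ref{prop:mainprop}(1). Because $\phi$ is far from injective, applying it alone cannot establish an equality in $\rHop[[X]]$; the genuine work is to show that \emph{both} sides of \eqref{eq:previous} already lie in the subring $s(R_1)[[X]]$, for on that subring $s\circ\phi$ is the identity, and then the two observations combine to give the claim.

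First I would compute $\phi$ of both sides coefficientwise. Writing $P(X)=\sum_k c_k X^k$ with $c_k\in\rHop$ and using Proposition \ref{prop:mainprop}(2) repeatedly, one has $\phi(\theta^2(c_k))=\phi(\theta(c_k))=\phi(c_k)$, so $\phi$ sends each of $P^{\theta^2}(X)$, $P^{\theta}(X)$ and $P(X)$ to $P^{\phi}(X)=P_1(Y)$ by Proposition \ref{prop:mainprop}(3), with $Y=pX^2$. Using Proposition \ref{prop:mainprop}(1) in the form $\phi(q_i^s)=q_i$, the left-hand side maps to $\sum_{i=0}^{2} q_i Y^i P_1(Y)$, while the right-hand side $g^s(Y)$ maps to $g(Y)$. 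By Theorem \ref{thm:Rationality Theorem} in the case $n=1$, applied with $Y$ in place of the formal variable, these two images coincide; hence $\phi(\mathrm{LHS})=\phi(\mathrm{RHS})$.

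The crux is the membership claim. The right-hand side $g^s(Y)=\sum_j s(g_j)\,p^j X^{2j}$ visibly has all coefficients in $s(R_1)$. For the left-hand side I must show that, for every $m$, the combination
\[
\theta^2(c_m) + q_1^s\, p\, \theta(c_{m-2}) + q_2^s\, p^2\, c_{m-4}
\]
lies in $s(R_1)$, even though no individual $c_k$ need do so. This is where the explicit structure of $\rHop$ and of the endomorphism $\theta$ must enter: one has to realize the $c_k$ as concrete double-coset sums, track how $\theta$ and left multiplication by the lifted operators $q_1^s,q_2^s$ act on them, and exhibit the three-term combination as the $s$-image of the corresponding $GL_2$ expression. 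I expect this to be the main obstacle, since it amounts to an exact cancellation of the part outside $s(R_1)$ and effectively reconstructs the $GL_2$ three-term Hecke recursion inside the noncommutative ring.

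Finally, granting the membership claim, the proof closes formally. Both sides lie in $s(R_1)[[X]]$, on which $s\circ\phi$ acts as the identity: indeed $\phi\circ s = id_{R_1}$ forces $s(\phi(s(a)))=s(a)$ for every $a\in R_1$, hence $s\circ\phi$ fixes each coefficient in $s(R_1)$. Applying $s$ to the equality $\phi(\mathrm{LHS})=\phi(\mathrm{RHS})$ obtained in the second step therefore yields $\mathrm{LHS}=s(\phi(\mathrm{LHS}))=s(\phi(\mathrm{RHS}))=\mathrm{RHS}$, which is precisely \eqref{eq:previous}.
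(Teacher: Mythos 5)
Your reduction is formally valid as a skeleton, but it contains a genuine gap: the ``membership claim'' you defer is not a fillable technicality — beyond the first few coefficients it is \emph{equivalent} to the identity being proved, so the $\phi$--$s$ scaffolding yields no actual progress on the substance. Concretely, writing $P(X)=\tilde{P}_1(X^2)$, the coefficient of $X^{2k}$ on the left of (\ref{eq:previous}) is
\[
a_k=\tilde{T}(p^k)^{\theta^2}+p\,q_1^s\,\tilde{T}(p^{k-1})^{\theta}+p^2\,q_2^s\,\tilde{T}(p^{k-2})
\]
(terms with negative index read as $0$), while on the right it is $1$ for $k=0$ and $0$ for $k\ge1$, since $g(X)=1$ when $n=1$. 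For small $k$ your argument does work, and for exactly the reason the present paper proves Equality (\ref{equation:general:main}) modulo $X^{2^n}$: one has $\tilde{T}(p^l)^{\theta^m}\in\mathrm{Im}(s)$ whenever $m\ge l$, because $v_p(\mu(A))=l$ forces $\abgr A=p^l\abgr\subset A\abgr$, so $\theta^l$ already annihilates the vector part. But for $k\ge3$ no individual term of $a_k$ lies in $\mathrm{Im}(s)$; the target coefficient is $0$, whose membership in $\mathrm{Im}(s)$ is vacuous; and $\phi(a_k)=0$ is automatic while $\Ker\phi$ is enormous. Hence for these coefficients the assertion $a_k\in\mathrm{Im}(s)$, given the easy computation $\phi(a_k)=0$, is logically equivalent to $a_k=0$, which is the theorem. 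Your plan therefore reduces the hard part of the theorem to itself.

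What actually closes this gap — in \cite{Hy} for the statement at hand, and in Sections \ref{section:Heisenberg Lie algebras}--\ref{section:MainResult} of the present paper for the $n=2$ analogue — is the explicit double-coset computation that you name but do not carry out: one computes products such as $\tilde{T}(A,\vzero)\,\tilde{T}(p^{k+1})^{\theta}$ via Lemma \ref{HeisenbergLiealgebras:lemma:tTAzerotTB}, using explicit systems of representatives of $\gr\backslash\gr A\gr$ and orbit decompositions of the vector module under stabilizer groups such as $\gr^A$ (cf.\ Lemma \ref{Caluculation:lemma:Decomposition}, Propositions \ref{prop:TATpnTheta} and \ref{prop:TBTpn}, together with the $\theta$-formulas of Lemma \ref{lemma:Thetheta}), and assembles the resulting recursions into the exact vanishing
\[
p^{2}q_2^s\,\tilde{T}(p^{k})+p\,q_1^s\,\tilde{T}(p^{k+1})^{\theta}+\tilde{T}(p^{k+2})^{\theta^2}=0\qquad(k\ge0),
\]
the $n=1$ counterpart of Corollary \ref{cor:maincor}. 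Until you produce that computation, your write-up establishes only the easy half — the equality $\phi(\mathrm{LHS})=\phi(\mathrm{RHS})$ and the initial coefficients — not the identity in $R_{\mathcal{H}^{(1)}_p}[[X]]$.
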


Our main purpose of the present paper is to extend this result. Precisely,
we establish the noncommutativity of the coefficients of $\pHtp(X)$
, moreover, the following identity.

\begin{theorem}[Theorem \ref{thm:main}]
We set  $P(X) = \pHtp(X)$, $g(X)=g^{(2)}(X)$, and $q_i = q^{(2)}_i$ for $0\leq i\leq4$.
Then we have the following identity.
\begin{equation}\label{eq:present}
\sum^{4}_{i=0} q^s_i Y^i P^{\theta^{4-i}}(X) = g^s(Y),
\end{equation}
where $Y = p^2 X^{3}$. 
\end{theorem}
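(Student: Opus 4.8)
The plan is to transport the Rationality Theorem for $GSp_4$ from the commutative ring $R_2$ up to the noncommutative ring $\rHtp$ by means of the section $s$, and then to account for the discrepancy introduced by the $\theta$-twists using the other two properties of Proposition \ref{prop:mainprop}. Throughout, write $\pHtp(X)=\sum_{k\ge 0} T_k X^k$ with $T_k\in\rHtp$, and recall that a superscript denotes applying a ring homomorphism to all coefficients.

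First I would apply $s$ to the $n=2$ instance of the Rationality Theorem. Since $s$ is a ring homomorphism and the $q^{(2)}_i$ together with the coefficients of $P_2$ all lie in the commutative ring $R_2$, applying $s$ coefficientwise to $\sum_{i=0}^{4} q^{(2)}_i Y^i P_2(Y)=g^{(2)}(Y)$ yields the identity
\begin{equation*}
\sum_{i=0}^{4} q^s_i\, Y^i\, P_2^s(Y)=g^s(Y)
\end{equation*}
in $\rHtp[[Y]]$. Property (3) gives $P^{\phi}(X)=P_2(p^2X^3)=P_2(Y)$, so applying $s$ coefficientwise shows $P_2^s(Y)=P^{s\circ\phi}(X)$. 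Since this transported identity and the desired one share the right-hand side $g^s(Y)$, subtracting them shows that the theorem is \emph{equivalent} to the vanishing
\begin{equation*}
\sum_{i=0}^{4} q^s_i\, Y^i\, \bigl(P^{\theta^{4-i}}(X)-P^{s\circ\phi}(X)\bigr)=0 .
\end{equation*}

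The useful feature of this reduction is that every error term already lies in $\Ker\phi$: property (2) gives $\phi\circ\theta^{j}=\phi$ for all $j$, and property (1) gives $\phi\circ s\circ\phi=\phi$, whence $\phi\bigl((\theta^{4-i}-s\circ\phi)(T_k)\bigr)=\phi(T_k)-\phi(T_k)=0$. Equivalently, applying $\phi$ coefficientwise to the whole claimed identity recovers exactly the Rationality Theorem — a consistency check, but one that proves nothing on its own because $\phi$ is far from injective. Expanding in powers of $X$ via $Y^i=p^{2i}X^{3i}$ and comparing coefficients of $X^M$, the theorem reduces to the family of identities
\begin{equation*}
\sum_{0\le i\le M/3} p^{2i}\, q^s_i\,\bigl(\theta^{4-i}-s\circ\phi\bigr)(T_{M-3i})=0\qquad(M\ge 0),
\end{equation*}
each taking place among elements of $\Ker\phi\subset\rHtp$.

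The main obstacle is precisely this last family, and here the explicit arithmetic of the five-dimensional Heisenberg Hecke ring must enter. I would prove these identities by induction on $M$, exploiting the layered structure I expect the $T_k$ to carry: the $X$-degree records the scaling of an automorphism of $\Ht$ on the one-dimensional center, while $\phi$ records the induced $GSp_4$-similitude on the four-dimensional quotient. The key input will be an explicit description of how $\theta$ acts on each layer — a recursion expressing $\theta^{j}(T_k)$ and $s(\phi(T_k))$ through lower-degree coefficients and the image of $s$ — so that the weighted combination above telescopes to zero. This step is substantially harder than in the dimension-three case of \cite{Hy}, where the symplectic quotient is merely $GL_2$: for $GSp_4$ the genus-two double-coset combinatorics generates many more terms, and since all of them lie in $\Ker\phi$, where $\phi$ provides no information, controlling their exact cancellation is the central difficulty. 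Once these coefficientwise identities are established, resumming against $X^M$ gives the displayed vanishing, and hence the theorem.
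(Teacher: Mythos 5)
Your reduction is correct, and it is essentially the paper's own reduction in different clothing: applying $s$ to Theorem \ref{theorem:rationality} and comparing coefficients of $X^M$ against the claimed identity is equivalent to the paper's reduction of Problem \ref{myprob} to the single family of coefficientwise identities $\sum_{i=0}^{4} p^{2(4-i)}\, q^s_{4-i}\, \tilde{T}\left(p^{k+i}\right)^{\theta^i}=0$ for $k \geq 0$. Indeed, your error terms $(\theta^{4-i}-s\circ\phi)(T_{M-3i})$ vanish \emph{individually} whenever $M \leq 12$, because $\tilde{T}(p^{k'})^{\theta^m}=\tilde{T}(p^{k'})^{\theta^\infty}=s\bigl(\phi(\tilde{T}(p^{k'}))\bigr)$ as soon as $m \geq k'$ (the elementary divisors of $A$ with $v_p(\mu(A))=k'$ divide $p^{k'}$, so $p^{k'}\abgr \subset A\abgr$); this recovers the paper's proposition that the identity holds modulo $X^{2^n}$, which the paper instead proves via the injectivity of $s$. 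So the bookkeeping is sound, and your observation that $\phi$ kills all error terms correctly explains why $\phi$ is useless beyond a consistency check.

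The genuine gap is that your proof stops exactly where the paper's proof begins. For $M>12$ the cancellation is the whole theorem, and you replace it with a declaration of intent (``I would prove these identities by induction on $M$ \dots so that the weighted combination above telescopes to zero''). No such recursion expressing $\theta^{j}(T_k)$ through lower-degree coefficients exists in the paper, and in fact no induction on $M$ occurs at all: for each fixed $k$ the identity is established outright by computing the two products $\tilde{T}(A,\vzero)\,\tilde{T}(p^{k+1})^{\theta}$ and $\tilde{T}(B,\vzero)\,\tilde{T}(p^{k+2})^{\theta^2}$ for $A=\diag(1,1,p,p)$ and $B=\diag(1,p,p^2,p)$ (Propositions \ref{prop:TATpnTheta} and \ref{prop:TBTpn}). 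This rests on concrete inputs your proposal never supplies: explicit systems of representatives of $\gr\bs\gr A\gr$ and $\gr\bs\gr B\gr$ (Propositions \ref{Calculation:prop:ggAg} and \ref{Calculation:prop:ggBg}) and their restrictions to the sets ${\mathcal X}(A,C)$, ${\mathcal X}(B,C)$; the orbit decompositions of $\Zp^4$ under $\gr^A$ and $\gr^{P_1}$ (Lemma \ref{Caluculation:lemma:Decomposition} and Corollary \ref{Caluculation:cor:Decomposition}), which reduce the coset counts $\left|\gr\bs{\mathcal Y}(\cdot,C,\va)\right|$ of Lemma \ref{HeisenbergLiealgebras:lemma:tTAzerotTB} to the finitely many vectors $\ve_1,\ve_2,\ve_3,\vzero$ and their $p$-multiples; and the evaluation of $\theta$ on the resulting layered sums via Proposition \ref{Thetheta:tT(A)theta} (Lemma \ref{lemma:Thetheta}). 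Only after all of this do the terms combine into Corollary \ref{cor:maincor}, which is precisely the needed identity. You correctly name this cancellation as ``the central difficulty,'' but naming it is not resolving it: as written, your argument proves the statement only in degrees $M \leq 12$, and the genus-two double-coset computation that carries the theorem is absent.
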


It should be noted that
Equalities (\ref{eq:previous}) and (\ref{eq:present}) recover the rationality theorems of Hecke and Shimura 
via the morphisms $\phi_1$ and $\phi_2$, respectively. This is an immediate consequence of Proposition \ref{prop:mainprop}. 

For the case $n \geq 3$, we show the noncommutativity of the coefficients of $\pHnp(X)$ (cf. Theorem \ref{thm:noncommutativity}), and raise the following unsolved problem.%

\begin{problem}[Problem \ref{myprob}]
We set $P(X)=\pHnp(X)$, $g(X)=g^{(n)}(X)$, and $q_i = q^{(n)}_i$ for $0\leq i \leq 2^n$.
Does $P(X)$ satisfy the following identity ?
\begin{equation}\label{mainprobeq}
\sum^{2^n}_{i=0} q^s_i Y^i P^{\theta^{N-i}}(X) = g^s(Y).
\end{equation}
Where $Y = p^nX^{n+1}$ and $N=2^n$. 
\end{problem}

The outline of the present paper is as follows.
In Section \ref{section:BasicTheory}, we recall the definition of Hecke rings and study the morphisms $s$, $\phi$ and $\theta$.
Section \ref{section:RingsOfHeckeAlgebras} describes the local Hecke rings and the local Hecke series associated with algebras.
In Section \ref{section:Heisenberg Lie algebras}, we focus on the case of the Heisenberg Lie algebras, and then show the noncommutativity of the coefficients of $\pHnp(X)$ for all $n$. 
Finally, our main theorem is proved in Section \ref{section:MainResult}.
}

\section{Abstract Hecke rings and some morphisms}\label{section:BasicTheory}
{
\newcommand{\tT}{\tilde{T}}
\newcommand{\set}[1]{\left\{ #1 \right\}}
\newcommand{\card}[1]
{
	\left| #1 \right|
}

\newcommand{\thetaCzero}{C_0}
%
In this section, we recall the definition of Hecke rings and define the morphisms $s$, $\theta$, and $\phi$ which are used in section \ref{section:Heisenberg Lie algebras}.

First, we recall the definition of Hecke rings.
For more details, refer to \cite[Shimura, Chapter 3]{S2}.
Let $G$ be a group, $\mo$ be a submonoid of $G$, and $\gr$ be a subgroup of $\mo$. We assume that 
the pair $(\gr, \mo)$ is a double finite pair, i.e., for all $A \in \mo$, 
$\gr \bs \gr A \gr$ and $\gr A \gr / \gr$ are finite sets.
Then, one can define the Hecke ring $R = R(\gr, \mo)$ associated with the pair $(\gr, \mo)$ as follows:
\begin{itemize}
\item The underlying abelian group is the free abelian group on the set $\gr \bs \mo /\gr$.
\item The product of $(\gr A \gr)$ and $(\gr B \gr)$ is defined by
	\[\sum_{\gr C \gr \in \gr \bs \mo /\gr} m_C (\gr C \gr), \]
where 
\begin{eqnarray*}
m_C &=& 
\card{ 
\set{
\gr \beta \in \gr \bs \gr B \gr \ |\ C\beta^{-1} \in \gr A\gr 
}
}\\
 &=& 
\card{ 
\set{
\alpha \gr \in \gr A \gr / \gr \ |\ \alpha^{-1}C \in \gr B\gr 
}
}
.
\end{eqnarray*}
\end{itemize}
Note that $m_C \not =  0$ if and only if $C \in \gr A \gr B \gr$.

Define the element $T_{\gr, \mo}(A)$ of $R(\gr, \mo)$ by $\gr A \gr$ for every $A \in \mo$. 
We also define $\deg_{\gr, \mo}(\gr A\gr)$ by $\left| \gr \bs \gr A \gr\right|$ for every $A \in \mo$.
The map $\deg_{\gr, \mo}$ extends by linearity to a homomorphism from $R(\gr, \mo)$ to $\Z$, and
forms a ring homomorphism.
We will write simply ``$T(A)$", ``$\deg$" and ``$\deg T(A)$", for
$T_{\gr, \mo}(A)$, $\deg_{\gr, \mo}$ and $\deg_{\gr, \mo} ( T_{\gr, \mo}(A) )$
respectively when no confusion can arise.
\smallskip

Fix a double finite pair $(\gr, \mo)$ and a both sides $\mo$-module $\abgr$. 
Next, we construct a Hecke ring $\tR$ using $\mo$, $\gr$ and $\abgr$.
We define the monoid $\tmo= \tmo({\mo,\abgr})$ as follows.
\begin{itemize}
\item The underlying set is $\mo \times \abgr$.
\item The operation is defined by 
	$(A, \va) \cdot (B, \vb) = (AB, A \vb + \va B)$ for $A,B \in \mo$,  $\va,\ \vb \in \abgr$.
\end{itemize}
We also define $\tgr = \tgr({\gr,\abgr})$ by the subgroup of $\tmo$ whose underlying set is $\gr \times \abgr$.
Notice that the identity element of $\tmo$ is $(E,\vzero)$, 
where $E$ and $\vzero$ are the identity elements of $\mo$ and $\abgr$ respectively.
It is easy to see $(X, \vx)^{-1} = (X^{-1}, - X^{-1}\vx X^{-1})$ for each $(X,\vx) \in \tgr$.

From now on, we assume the following:
\begin{assumption}\ \label{assumption:deg}
\begin{enumerate}
\item
$\abgr /(A\abgr \cap \abgr A) $ is a finite set for every $A \in \mo$.
\item \setcounter{assumpcnt}{\arabic{enumi}}
$\abgr$ is a sub-$\mo$-module of a both sides $G$-module $\abgr'$. 
\end{enumerate}
\end{assumption}
\medskip

Then, the monoid $\tmo$ is naturally a submonoid of the group $\tmo(G,\abgr')$.
Let us calculate the degree of each $(A,\va) \in \tmo$ and prove the double finiteness of the pair $(\tgr,\tmo)$.
We put $\grsupa = A^{-1} \gr A \cap \gr$ and $\grsuba = \gr \cap A \gr A^{-1}$.
Note that there are natural bijections of
$\grsupa \bs \gr$ onto $\gr \bs \gr A \gr$ and $\gr / \grsuba$ onto $\gr A \gr / \gr$.


\begin{proposition}\label{BasicTheory:DoubleCoset}
For any two elements $(A,\va)$ and $(A,\vb)$ of $\tmo$, $\tgr (A,\va) \tgr = \tgr (A,\vb) \tgr$ 
if and only if there exists $X \in \grsuba$ such that 
\[
	 X  \ast \va  \equiv \vb \mod A  \abgr +\abgr A  .
\]
Where $X  \ast \va = X \va A  ^{-1}X^{-1}  A $.

\end{proposition}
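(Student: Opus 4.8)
The plan is to reduce the equality of double cosets to a single membership condition and then to compute a general element of the double coset explicitly. First I would invoke the standard fact that $\tgr(A,\va)\tgr = \tgr(A,\vb)\tgr$ holds if and only if $(A,\vb)\in\tgr(A,\va)\tgr$, so that the whole statement becomes an analysis of when $(A,\vb)$ lies in the double coset of $(A,\va)$.

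Next I would compute a general element of $\tgr(A,\va)\tgr$. Applying the multiplication rule of $\tmo$ twice, for $(X,\vx),(Y,\vy)\in\tgr$ one obtains
\[
(X,\vx)(A,\va)(Y,\vy) = \bigl(XAY,\ XA\vy + X\va Y + \vx A Y\bigr).
\]
Setting this equal to $(A,\vb)$, the first coordinate forces $XAY=A$; solving in the ambient group $G$ gives $Y=A^{-1}X^{-1}A$, and since $Y\in\gr$ this happens exactly when $X\in\gr\cap A\gr A^{-1}=\grsuba$. Writing $X=A\gamma A^{-1}$ with $\gamma\in\gr$ then yields $Y=\gamma^{-1}$, and by definition $X\va Y = X\va A^{-1}X^{-1}A = X\ast\va$, which lies in $\abgr$ because $X,Y\in\gr\subseteq\mo$.

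For the forward implication I would substitute $Y=A^{-1}X^{-1}A$ into the second coordinate to get $\vb = X\ast\va + XA\vy + \vx AY$ and then identify the two remaining terms as error terms modulo $A\abgr+\abgr A$: using $XA=A\gamma$ one sees $XA\vy = A(\gamma\vy)\in A\abgr$, while $\vx AY = \vx X^{-1}A\in\abgr A$. Hence $\vb - X\ast\va\in A\abgr+\abgr A$, which is the asserted congruence together with $X\in\grsuba$. For the converse, given $X\in\grsuba$ with $\vb - X\ast\va = A\vc + \ve A$ for some $\vc,\ve\in\abgr$, I would reverse-engineer the free parameters by setting $\vy=\gamma^{-1}\vc$ and $\vx=\ve X$, so that $XA\vy=A\vc$ and $\vx AY=\ve A$; then $(X,\vx)(A,\va)(Y,\vy)=(A,\vb)$, establishing membership.

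The routine parts are the two multiplications in $\tmo$ and the bookkeeping of the left and right module actions. The point requiring care is the passage to inverses in the ambient group $\tmo(G,\abgr')$ (which is where Assumption \ref{assumption:deg}(2) is used), together with verifying that all resulting expressions — in particular $X\ast\va$ and the chosen $\vx,\vy$ — land back inside $\abgr$ rather than merely in $\abgr'$. The rewriting $X=A\gamma A^{-1}$, equivalently $Y=\gamma^{-1}\in\gr$, is the mechanism that makes both error terms $A(\gamma\vy)$ and $\vx X^{-1}A$ visibly lie in $A\abgr+\abgr A$ and keeps the inverse construction in the converse inside $\abgr$.
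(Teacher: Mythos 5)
Your argument is correct and is essentially the paper's proof: the paper writes the membership condition in the balanced form $(X,\vx)(A,\va)=(A,\vb)(Y,\vy)$, which forces $Y=A^{-1}XA$ and $X\ast\va-\vb=A\vy Y^{-1}-\vx X^{-1}A$ --- the same computation as yours after replacing your $(Y,\vy)$ by its inverse. Your explicit witnesses $\vy=\gamma^{-1}\vc$, $\vx=\ve X$ in the converse merely spell out the step, left implicit in the paper, that $A\vy Y^{-1}$ and $\vx X^{-1}A$ range over all of $A\abgr$ and $\abgr A$ as $\vy,\vx$ range over $\abgr$.
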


\begin{proof}
$\tgr (A,\va) \tgr = \tgr (A,\vb) \tgr$ if and only if there exist $(X ,\vx ), (Y ,\vy ) \in \tgr$ such that
$(X ,\vx )(A ,\va) = (A ,\vb )(Y ,\vy )$, which is equivalent to
\[Y   = A  ^{-1}X  A  , \quad X \ast \va - \vb  = A  \vy  Y  ^{-1} - \vx  X ^{-1}A.\]
This completes the proof.
\end{proof}

%
\begin{proposition}
For $(A,\va) \in \tmo$, we have
\[\card{\tgr \bs \tgr (A,\va) \tgr} = [ \gr : \gr_A][ A\abgr :A \abgr \cap \abgr A] 
\left|  \gr^A* (\va \mod  A \abgr + \abgr A)\right|.\]
 where $ \grsuba  \ast (\va  \mod A\abgr )$ means the orbit of 
 $(\va  \mod A\abgr + \abgr A)$ in $\abgr / (A \abgr + \abgr A)$ under $ \grsuba $ acting by $\ast$.
Particularly, $\card{\tgr \bs \tgr (A,\va) \tgr}$ is finite.
\end{proposition}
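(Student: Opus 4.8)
The plan is to analyze the natural projection onto the first coordinate. Sending $\tgr(B,\vb) \mapsto \gr B$ defines a well-defined surjection $\pi \colon \tgr\bs\tgr(A,\va)\tgr \to \gr\bs\gr A\gr$, whose base has exactly $[\gr:\gr_A]$ elements by the bijection $\grsupa\bs\gr \cong \gr\bs\gr A\gr$ recorded just before Proposition \ref{BasicTheory:DoubleCoset}. First I would show that every fibre of $\pi$ has the same cardinality. The right-multiplication action of $\tgr$ is transitive on $\tgr\bs\tgr(A,\va)\tgr$, since $\tgr(X,\vx)(A,\va)(Y,\vy) = \tgr(A,\va)(Y,\vy)$ shows every left coset is $\tgr(A,\va)$ translated on the right; the analogous $\gr$-action is transitive on $\gr\bs\gr A\gr$; and $\pi$ is equivariant, so the fibres are permuted transitively and hence are equinumerous. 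Thus it suffices to count the fibre over $\gr A$ and multiply by $[\gr:\gr_A]$.

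Next I would compute the fibre over $\gr A$. Using $(Z,\vzero)(A,\vc)=(ZA,Z\vc)$ for $Z\in\gr$, every left coset lying over $\gr A$ has a representative of the shape $(A,\vb)$, and $(A,\vb)$, $(A,\vb')$ lie in the same left coset exactly when $\vb'\equiv \vb \pmod{\abgr A}$ (from $(E,\vx)(A,\vb)=(A,\vb+\vx A)$, forcing the first coordinate to be $E$). Hence the fibre is identified with the image in $\abgr/\abgr A$ of the set of $\vb$ with $(A,\vb)\in\tgr(A,\va)\tgr$. Writing $(A,\vb)=(X,\vx)(A,\va)(Y,\vy)$ forces $XAY=A$, i.e. $X\in\grsuba$ and $Y=A^{-1}X^{-1}A\in\grsupa$, and then $\vb = XA\vy + X\va Y + \vx AY$. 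Now $\vx AY=\vx X^{-1}A\in\abgr A$, and $XA\vy = A(Y^{-1}\vy)$ ranges over $A\abgr$ as $\vy$ ranges over $\abgr$, while $X\va Y = X\va A^{-1}X^{-1}A = X\ast\va$. Therefore the fibre is the subset
\[
\{\, X\ast\va + A\abgr \ \bmod \ \abgr A \ :\ X\in\grsuba \,\} \subseteq \abgr/\abgr A .
\]

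Finally I would recognize this subset as a preimage. Under the quotient map $q\colon \abgr/\abgr A \to \abgr/(A\abgr+\abgr A)$ its image is precisely the orbit $\grsuba\ast(\va \bmod A\abgr+\abgr A)$, and the displayed set equals $q^{-1}$ of that orbit, since any $\vb\equiv X\ast\va$ modulo $A\abgr+\abgr A$ can be adjusted by an element of $\abgr A$ into the form $X\ast\va+A\abgr$. Every fibre of $q$ has the common size $\left|(A\abgr+\abgr A)/\abgr A\right|$, and by the second isomorphism theorem $(A\abgr+\abgr A)/\abgr A \cong A\abgr/(A\abgr\cap\abgr A)$, contributing the factor $[A\abgr:A\abgr\cap\abgr A]$. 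Multiplying the three contributions yields the asserted formula, and finiteness of each factor follows from the double finiteness of $(\gr,\mo)$ and from Assumption \ref{assumption:deg}(1), the orbit being a subset of the finite set $\abgr/(A\abgr+\abgr A)$.

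The main obstacle I anticipate is the bookkeeping with $A$, $X$, $Y$, which are invertible only in the ambient group $G$, not in $\mo$: I would carry out every computation inside the $G$-module $\abgr'$ granted by Assumption \ref{assumption:deg}(2), checking at each step that the result returns to $\abgr$ (for instance $\vx X^{-1}\in\abgr$ and $Y^{-1}\vy\in\abgr$ because $X^{-1},Y^{-1}\in\gr$, and $A\abgr\subseteq\abgr$), together with the identities $XA=AY^{-1}$ and $AY=X^{-1}A$. The conceptual heart, which is where the exact index emerges, is the clean isomorphism $(A\abgr+\abgr A)/\abgr A\cong A\abgr/(A\abgr\cap\abgr A)$ converting the fibre count of $q$ into the stated factor $[A\abgr:A\abgr\cap\abgr A]$.
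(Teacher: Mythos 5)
Your proposal is correct, and it takes a genuinely different route from the paper's. The paper counts $\left|\tgr \bs \tgr(A,\va)\tgr\right|$ as the index $[\tgr : \tgr_{(A,\va)}]$ of the subgroup $\tgr_{(A,\va)} = \tgr \cap (A,\va)^{-1}\tgr(A,\va)$ and factors that index through the extension $1 \to \abgr \to \tgr \to \gr \to 1$ via a commutative diagram of short exact sequences: the factor $[A\abgr : A\abgr\cap\abgr A]$ arises as $[\abgr : {\cal N}]$ with ${\cal N} = \{\vx \in \abgr \mid A\vx \in A\abgr\cap\abgr A\}$ (the isomorphism being $\vx \mapsto A\vx$), and the remaining factor $[\grsupa : \gr_1]$ is converted into the orbit size by orbit--stabilizer, since $A\gr_1 A^{-1}$ is exactly the stabilizer of $(\va \bmod A\abgr + \abgr A)$ in $\grsuba$. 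You instead fibre the coset space itself over $\gr\bs\gr A\gr$ by the first coordinate, equalize the fibres using equivariance under the transitive right $\tgr$-action (which is valid: $\tgr \to \gr$ is surjective and the right $\gr$-action downstairs is transitive), and then compute the single fibre over $\gr A$ explicitly as the full preimage, under $q:\abgr/\abgr A \to \abgr/(A\abgr+\abgr A)$, of the $\grsuba$-orbit of $\va$, with $[A\abgr : A\abgr\cap\abgr A]$ appearing as $|\ker q|$ via the second isomorphism theorem. I checked your bookkeeping and it is sound: $XAY = A$ forces $X \in \grsuba$ and $Y = A^{-1}X^{-1}A \in \grsupa$; the identities $XA = AY^{-1}$ and $AY = X^{-1}A$ give $XA\vy = A(Y^{-1}\vy)$ ranging over $A\abgr$ and $\vx AY = (\vx X^{-1})A \in \abgr A$; and all excursions through inverses stay inside $\abgr$ exactly as Assumption \ref{assumption:deg}(2) permits. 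What the paper's argument buys is economy and structure --- index multiplicativity plus orbit--stabilizer, with no explicit coset representatives --- and it isolates the stabilizer $\gr_1$, which the paper's displayed conditions (\ref{gr1_1})--(\ref{gr1_2}) describe concretely; what yours buys is that each of the three factors becomes a literal geometric count (size of the base, size of the fibres of $q$, size of the orbit), and your explicit description of the cosets over $\gr A$ as the classes $X \ast \va + A\abgr + \abgr A$ effectively reproves Proposition \ref{BasicTheory:DoubleCoset} in passing, making the finiteness assertion and the role of Assumption \ref{assumption:deg}(1) transparent.
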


\begin{proof}
We put \[\tgr_{(A,\va)} = 
\set{\left. (X,\vx) \in \tgr \ \right| \ (A,\va) (X,\vx) = (Y, \vy) (A,\va)
 \ \text{for some $(Y,\vy) \in \tgr$}}.\]
Then $\card{\tgr \bs \tgr (A,\va) \tgr} = \card{\tgr_{(A,\va)} \bs \tgr}$.
$(X,\vx) \in \tgr_{(A,\va)}$ if and only if $(X,\vx)$ has the following three conditions:
\begin{eqnarray}
& & X \in  \gr_A, \label{gr1_1}\\
& & (AXA^{-1}) \ast \va   - \va   \in A \abgr  +  \abgr A, \label{gr1_2}\\
& & A\vx X^{-1} \in  (AXA^{-1}) \ast \va  - \va  + \abgr A.
\end{eqnarray}
Let $\gr_1$ be the image of $\tgr_{(A,\va)}$ by the canonical projection $\tgr \to \gr$, 
and put ${\cal N} = \set{\vx \in \abgr \ | \ A\vx \in A \abgr \cap \abgr A}$. 
Then we have a commutative diagram as follows.
\[
	\begin{CD}
	1 @> >> \abgr @> >> \tgr @> >> \gr @> >> 1 \\
	  @.      @AA A   @AA A    @AA A   \\
	1 @> >> {\cal N} @> >> \tgr_{(A,\va)} @> >> \gr_1@> >> 1 \\
	\\
	\end{CD}
\]
where $\abgr \to \tgr$ is the canonical embedding, and each of the three vertical morphisms is the inclusion. 
Since the two horizontal sequences are exact, we have  

\[
[\tgr:\tgr_{(A,\va)}]
=
[\gr : \grsupa][\grsupa:\gr_1][\abgr : {\cal N}].
\]
Clearly, $\abgr / {\cal N}$ is naturally isomorphic to $A\abgr / A \abgr  \cap  \abgr A$. Since $\gr_1$ coinsides with the subgroup of $\gr$ consisting of elements satisfying (\ref{gr1_1}) and (\ref{gr1_2}), 
$A \gr _1 A^{-1}$ is the stabilizer subgroup of 
$\left(\va \mod  A \abgr + \abgr A\right)$
in $\gr^A$.
Thus we have the desired identity.
\end{proof}
A similar argument shows that $\card{\tgr (A,\va) \tgr / \tgr}$ is also finite. Thus we have:

\begin{corollary}\label{cor:DoubleFiniteness}
The pair $(\tgr,\tmo)$ is double finite. 
\end{corollary}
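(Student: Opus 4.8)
The plan is to leverage the preceding proposition, which already shows that $\card{\tgr \bs \tgr (A,\va) \tgr}$ is finite for every $(A,\va) \in \tmo$, and to supply the missing half --- finiteness of the right quotient $\tgr (A,\va) \tgr / \tgr$ --- by a mirror-image argument. Since double finiteness of $(\tgr, \tmo)$ means exactly that both one-sided quotients are finite for all $(A,\va)$, only the right quotient remains, and this is the ``similar argument'' alluded to just before the corollary.

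First I would introduce the dual stabilizer
\[
\tgr^{(A,\va)} = \set{\left. (X,\vx) \in \tgr \ \right|\ (X,\vx)(A,\va) = (A,\va)(Y,\vy) \text{ for some } (Y,\vy) \in \tgr},
\]
which is the right-handed counterpart of the subgroup $\tgr_{(A,\va)}$ appearing in the preceding proof, and record the identity $\card{\tgr (A,\va) \tgr / \tgr} = \card{\tgr / \tgr^{(A,\va)}}$. Unwinding the defining relation via the product rule $(X,\vx)(A,\va) = (XA,\, X\va + \vx A)$ decomposes membership in $\tgr^{(A,\va)}$ into three conditions mirroring (\ref{gr1_1}), (\ref{gr1_2}) and the third displayed condition: a group condition forcing $Y = A^{-1}XA$ and hence $X \in \gr \cap A\gr A^{-1} = \grsuba$; a compatibility condition on $(\va \bmod A\abgr + \abgr A)$ under a suitably twisted action; and a condition determining $\vx A$ modulo $A\abgr$.

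Next I would reproduce the commutative diagram with exact rows from the preceding proof, with $\tgr^{(A,\va)}$ in place of $\tgr_{(A,\va)}$ and with the submodule $\abgr A$ replaced by its mirror $A\abgr$ (and the kernel ${\cal N}$ replaced by its right-multiplication analogue). This factors the index $[\tgr : \tgr^{(A,\va)}]$ as a product of three terms: $[\gr : \grsuba]$, a module index of the form $[\abgr A : A\abgr \cap \abgr A]$, and the size of the orbit of $(\va \bmod A\abgr + \abgr A)$ under the twisted action. Each factor is finite: $[\gr : \grsuba]$ equals $\card{\gr A\gr / \gr}$ under the natural bijection $\gr / \grsuba \cong \gr A\gr / \gr$ noted above, and so is finite by double finiteness of $(\gr, \mo)$; the module index is a subgroup of, and the orbit a subset of a quotient of, the finite group $\abgr / (A\abgr \cap \abgr A)$ provided by Assumption \ref{assumption:deg}(1). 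Hence $\card{\tgr (A,\va) \tgr / \tgr} < \infty$, and together with the left-quotient finiteness of the preceding proposition this proves the corollary.

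I expect the only real obstacle to be bookkeeping rather than substance: because the monoid operation on $\tmo$ is genuinely noncommutative, passing from the left quotient to the right quotient swaps the roles of left and right multiplication by $A$, so one must re-derive the correct form of the twisted action $\ast$ and be careful that the submodule appearing in the module condition is $A\abgr$ rather than $\abgr A$, and that the relevant group index is $[\gr : \grsuba]$ rather than $[\gr : \grsupa]$. Once this mirror symmetry is set up correctly, every finiteness input is already available --- from the double finiteness of $(\gr, \mo)$ and from Assumption \ref{assumption:deg}(1) --- so no new hypotheses are needed.
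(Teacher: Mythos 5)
Your proposal is correct and is exactly the argument the paper intends: its proof of the corollary consists of the preceding proposition (finiteness of $\card{\tgr \bs \tgr(A,\va)\tgr}$) plus the remark that ``a similar argument'' handles $\tgr(A,\va)\tgr/\tgr$, and your mirror-image construction --- the dual stabilizer $\tgr^{(A,\va)}$, the exact-row diagram with kernel $\{\vx \in \abgr \mid \vx A \in A\abgr \cap \abgr A\}$, and the factorization of the index through $[\gr:\grsuba]$, the module index, and the orbit size --- is precisely that similar argument, with all finiteness inputs correctly traced to the double finiteness of $(\gr,\mo)$ and Assumption \ref{assumption:deg}(1). Your bookkeeping caveats (using $\grsuba$ rather than $\grsupa$, and $A\abgr$ in the module condition) are also resolved the right way.
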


We write $\tR = R(\tgr,\tmo)$ and $\tT = T_{\tgr,\tmo}$ .
The above proposition is equivalent to the following statement: 

\begin{corollary}\label{cor:BasicTheory:Degree:Degree}
For $(A,\va) \in \tmo$, we have
\[\deg\tT(A,\va) =[ \gr : \gr_{A}][ A\abgr :A \abgr \cap \abgr A] 
\left|  \gr^A* (\va \mod  A \abgr + \abgr A)\right|.\]
\end{corollary}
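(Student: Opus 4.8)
The plan is to observe that this corollary is simply a reformulation of the immediately preceding proposition, obtained by unwinding the definition of the degree map; there is no new computation to perform. First I would recall that, for any double finite pair, the degree was defined at the start of the section by $\deg_{\gr,\mo}(\gr A\gr) = \card{\gr \bs \gr A \gr}$, and that the basis element $T_{\gr,\mo}(A)$ is by definition the double coset $\gr A \gr$. Applying these definitions to the pair $(\tgr,\tmo)$ — which is double finite by Corollary \ref{cor:DoubleFiniteness} — gives $\tT(A,\va) = \tgr(A,\va)\tgr$ and hence
\[
\deg\tT(A,\va) = \card{\tgr \bs \tgr(A,\va)\tgr}.
\]

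Next I would invoke the preceding proposition, which computes exactly this left-coset cardinality as
\[
\card{\tgr \bs \tgr(A,\va)\tgr} = [\gr : \gr_A]\,[A\abgr : A\abgr \cap \abgr A]\,\left| \gr^A * (\va \bmod A\abgr + \abgr A)\right|.
\]
Substituting the definitional equality from the first step into this formula yields precisely the asserted expression for $\deg\tT(A,\va)$, which completes the argument.

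There is essentially no obstacle here. The only points requiring attention are already secured earlier: the finiteness (and hence well-definedness) of $\deg$ on $\tR$ follows from the double finiteness of $(\tgr,\tmo)$ established in Corollary \ref{cor:DoubleFiniteness}, while the finiteness of each factor on the right-hand side is guaranteed by Assumption \ref{assumption:deg}. Thus the corollary is an immediate consequence of the proposition together with the identification $\deg\tT(A,\va) = \card{\tgr \bs \tgr(A,\va)\tgr}$, exactly as signalled by the remark that the two statements are equivalent.
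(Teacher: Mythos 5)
Your proposal is correct and matches the paper exactly: the paper offers no separate argument, stating only that the preceding proposition ``is equivalent to'' the corollary, which is precisely your observation that $\deg \tilde{T}(A,\va) = \left| \tgr \backslash \tgr (A,\va) \tgr \right|$ by the definition of the degree map applied to the pair $(\tgr,\tmo)$, whose double finiteness was already secured. Nothing further is needed.
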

\smallskip

The following three subsections define the morphisms $s$, $\theta$ and $\phi$, respectively.
%
From now on, we assume the following:
\begin{assumption}\label{assumption:s}
$A \abgr \supset \abgr A$ for every $A \in \mo$.
\end{assumption}
\subsection{The injection $s$}
%
We define $\psi:  \tmo \to \mo $ and $s_0 :  \mo \to \tmo $ by ($(C,\vc) \mapsto C)$ and $(C \mapsto (C,\vzero))$, respectively.

\begin{lemma}\label{mult1to11}
	For $A,B \in\mo$, Put 
	\[\tilde{\mathcal X}= \tgr \backslash \tgr (A,\vzero)\tgr (B,\vzero)  \tgr/ \tgr
	,\text{and}\ 
	{\mathcal X} = \gr \backslash \gr A\gr B  \gr/ \gr.\]
Let $\bar{\psi}:\tilde{\mathcal X}
\to 
{\mathcal X}$
and
$\bar{s_0}: {\mathcal X}
\to \tilde{\mathcal X}$
be the maps defined by $\psi$ and $s_0$, respectively. Then
$\bar{\psi}$ and $\bar{s_0}$ are the inverse to each other.
\end{lemma}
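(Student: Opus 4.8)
The plan is to verify that $\bar\psi$ and $\bar{s_0}$ are well defined on double cosets and then to show that the two composites $\bar\psi\circ\bar{s_0}$ and $\bar{s_0}\circ\bar\psi$ are the identity maps. Since $\psi$ and $s_0$ are both monoid homomorphisms and $\psi\circ s_0=\mathrm{id}_\mo$, one of the two composites will come for free, and essentially all the work will go into the other.

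First I would check well-definedness. As $\psi$ is a homomorphism with $\psi(\tgr)=\gr$ and $\psi(A,\vzero)=A$, it maps $\tgr(A,\vzero)\tgr(B,\vzero)\tgr$ into $\gr A\gr B\gr$ and sends $\tgr(C,\vc)\tgr$ to $\gr C\gr$, which is constant on $\tgr$-double cosets; hence $\bar\psi\colon\tilde{\mathcal X}\to{\mathcal X}$ is well defined. Similarly $s_0$ is a homomorphism with $s_0(\gr)\subseteq\tgr$, so $s_0(\gr A\gr B\gr)\subseteq\tgr(A,\vzero)\tgr(B,\vzero)\tgr$ and $\bar{s_0}(\gr C\gr)=\tgr(C,\vzero)\tgr$ is a well-defined element of $\tilde{\mathcal X}$. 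Then $\bar\psi\circ\bar{s_0}=\mathrm{id}_{\mathcal X}$ is immediate, since $\bar\psi(\tgr(C,\vzero)\tgr)=\gr C\gr$.

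The substance is $\bar{s_0}\circ\bar\psi=\mathrm{id}_{\tilde{\mathcal X}}$, which reduces to the claim that $\tgr(C,\vc)\tgr=\tgr(C,\vzero)\tgr$ for every $(C,\vc)\in\tgr(A,\vzero)\tgr(B,\vzero)\tgr$. Because Assumption \ref{assumption:s} gives $\abgr C\subseteq C\abgr$, it suffices to prove $\vc\in C\abgr$: writing $\vc=C\vy$ with $\vy\in\abgr$ yields $(C,\vc)=(C,\vzero)(E,\vy)\in\tgr(C,\vzero)\tgr$ directly (this is also Proposition \ref{BasicTheory:DoubleCoset} with $X=E$). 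Expanding a general element $(\gamma_1,\vx_1)(A,\vzero)(\gamma_2,\vx_2)(B,\vzero)(\gamma_3,\vx_3)$ of the product, I find $C=\gamma_1 A\gamma_2 B\gamma_3$ and $\vc$ equal to a sum of three terms, each obtained from $C$ by replacing exactly one of $\gamma_1,\gamma_2,\gamma_3$ by the corresponding $\vx_i$.

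It therefore remains to place each of these three terms in $C\abgr$, using only that $\gamma\abgr=\abgr=\abgr\gamma$ for $\gamma\in\gr$ (as $\gr$ is a subgroup of $\mo$, so $\gamma$ and $\gamma^{-1}$ both preserve $\abgr$) together with $\abgr D\subseteq D\abgr$ for $D\in\mo$ from Assumption \ref{assumption:s}. The two outer terms are easy: $\gamma_1 A\gamma_2 B\vx_3=C(\gamma_3^{-1}\vx_3)$ and $\vx_1 A\gamma_2 B\gamma_3=(\vx_1\gamma_1^{-1})C\in\abgr C\subseteq C\abgr$. I expect the cross term $\gamma_1 A\vx_2 B\gamma_3$ to be the main obstacle, since its module factor is trapped between the two monoid elements $A$ and $B$; the idea is to first absorb $\gamma_2$ by writing $\vx_2=\gamma_2(\gamma_2^{-1}\vx_2)$, then push the module factor past $B$ via $\abgr B\subseteq B\abgr$, and finally absorb $\gamma_3$, which lands the term in $C\abgr$. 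With all three terms in $C\abgr$ we get $\vc\in C\abgr$, the claim follows, and having identified both composites with identities we conclude that $\bar\psi$ and $\bar{s_0}$ are mutually inverse.
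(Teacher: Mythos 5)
Your proof is correct and takes essentially the same approach as the paper: both reduce the nontrivial composite $\bar{s_0}\circ\bar{\psi}$ to showing that the module component of any element of $\tgr(A,\vzero)\tgr(B,\vzero)\tgr$ lies in $C\abgr$, using Assumption \ref{assumption:s} to push module factors past $B$ and then concluding via Proposition \ref{BasicTheory:DoubleCoset} (equivalently, your direct factorization $(C,C\vy)=(C,\vzero)(E,\vy)$). The only difference is bookkeeping: the paper first normalizes a double-coset representative to the form $(A,\vzero)(X,\vx)(B,\vzero)$, so that only your middle term $\gamma_1 A\vx_2 B\gamma_3$ arises, whereas you expand the full triple product and absorb the two outer terms by hand.
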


\begin{proof}
It is clear that $\bar{\psi} \circ \bar{s_0}$ is the identity.
We shall show $\bar{s_0}\circ \bar{\psi} $ is also the identity.
For $(X,\vx) \in \tgr$,  $(A,\vzero) (X,\vx)  (B,\vzero) =(AXB, A\vx B)$.
Since $A\abgr B  = AX\abgr B \subset AXB\cdot \abgr$,
by Proposition \ref{BasicTheory:DoubleCoset}, we have
\[\tgr  (A,\vzero) (X,\vx)  (B,\vzero) \tgr = \tgr(AXB, \vzero)\tgr.\]
This completes the proof.
\end{proof}

\begin{lemma}\label{mult1to12}
	For $A,B,C \in\mo$, let $\alpha=(A,\vzero),\beta=(B,\vzero), \gamma=(C,\vzero)$. We put 
 \[\tilde{\mathcal X}=\{\tgr\delta \in \tgr \backslash \tgr  \beta \tgr;\gamma\delta^{-1} \in  \tgr \alpha \tgr \} \]
 and
 \[{\mathcal X}=
 \{\gr D \in \gr \backslash \gr  B  \gr;
			CD^{-1} \in  \gr A \gr \}
 .\]
Let $\bar{\psi}:\tilde{\mathcal X}
\to 
{\mathcal X}$
and
$\bar{s_0}: {\mathcal X}
\to \tilde{\mathcal X}$
be the maps defined by $\psi$ and $s_0$, respectively. Then
$\bar{\psi}$ and $\bar{s_0}$ are the inverse to each other, especially,
$|\tilde{\mathcal X}| = |{\mathcal X}|$.
\end{lemma}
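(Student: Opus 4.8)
The plan is to prove that $\bar\psi$ and $\bar{s_0}$ are well-defined maps between $\tilde{\mathcal X}$ and ${\mathcal X}$ and that they compose to the identity in both orders; the equality $|\tilde{\mathcal X}|=|{\mathcal X}|$ then follows. I would first record the structural facts I intend to use: $\psi$ extends to a group homomorphism $\tmo(G,\abgr')\to G$, $(C,\vc)\mapsto C$ (the formula $(X,\vx)^{-1}=(X^{-1},-X^{-1}\vx X^{-1})$ shows it respects inverses), it satisfies $\psi(\tgr)=\gr$ and $\psi\circ s_0=\mathrm{id}_{\mo}$, and $s_0$ is a monoid homomorphism. Well-definedness of $\bar\psi$ is then routine: applying $\psi$ to the two conditions $\delta\in\tgr\beta\tgr$ and $\gamma\delta^{-1}\in\tgr\alpha\tgr$ defining $\tgr\delta\in\tilde{\mathcal X}$ gives $D\in\gr B\gr$ and $CD^{-1}\in\gr A\gr$, so $\gr D\in{\mathcal X}$, and the value depends only on the coset. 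For $\bar{s_0}$, the computation already made in Lemma \ref{mult1to11} (e.g. $(X_1,\vzero)(B,\vzero)(X_2,\vzero)=(X_1BX_2,\vzero)$) shows $s_0(\gr B\gr)\subset\tgr\beta\tgr$ and, likewise, $\gamma(D,\vzero)^{-1}=(CD^{-1},\vzero)\in s_0(\gr A\gr)\subset\tgr\alpha\tgr$, so $\tgr(D,\vzero)\in\tilde{\mathcal X}$, again independently of the representative $D$. The identity $\bar\psi\circ\bar{s_0}=\mathrm{id}_{\mathcal X}$ is immediate from $\psi\circ s_0=\mathrm{id}$.

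The substance is $\bar{s_0}\circ\bar\psi=\mathrm{id}_{\tilde{\mathcal X}}$. Writing $\delta=(D,\vc)$, this is the claim $\tgr(D,\vzero)=\tgr(D,\vc)$, which via $(D,\vc)(D,\vzero)^{-1}=(E,\vc D^{-1})$ is equivalent to showing $\vc D^{-1}\in\abgr$. This is where I expect the main obstacle. The naive route, expanding $\delta=(X_1,\vx_1)(B,\vzero)(X_2,\vx_2)\in\tgr\beta\tgr$, produces $\vc D^{-1}$ as a sum whose troublesome term has the form $X_1(B\vy B^{-1})X_1^{-1}$ with $\vy\in\abgr$; to conclude one would need $B\abgr B^{-1}\subset\abgr$, i.e. $B\abgr\subset\abgr B$, which is the \emph{reverse} of Assumption \ref{assumption:s} and is unavailable. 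The fix is to use the second defining condition instead: write $\gamma\delta^{-1}=(Y_1,\vy_1)(A,\vzero)(Y_2,\vy_2)$ with $Y_i\in\gr$ and $\vy_i\in\abgr$, and match this against $\gamma\delta^{-1}=(CD^{-1},-CD^{-1}\vc D^{-1})$. Equating first components gives $CD^{-1}=Y_1AY_2$, and solving the second-component equation for $\vc D^{-1}$ yields, after cancellation, $\vc D^{-1}=-Y_2^{-1}\vy_2-Y_2^{-1}\bigl(A^{-1}(Y_1^{-1}\vy_1)A\bigr)Y_2$. Here $A^{-1}(Y_1^{-1}\vy_1)A\in\abgr$ precisely because $Y_1^{-1}\vy_1\in\abgr$ and Assumption \ref{assumption:s} is exactly what gives $A^{-1}\abgr A\subset\abgr$; since $Y_2,Y_2^{-1}\in\gr\subset\mo$ act on $\abgr$, the whole expression lies in $\abgr$, as required.

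Thus the delicate point, and the one I would emphasize, is that of the two membership conditions defining $\tilde{\mathcal X}$ only one interacts correctly with Assumption \ref{assumption:s}: the conjugation $A^{-1}(\cdot)A$ arising from $\gamma\delta^{-1}\in\tgr\alpha\tgr$ is controlled, whereas the conjugation $B(\cdot)B^{-1}$ arising from $\delta\in\tgr\beta\tgr$ is not. Once $\vc D^{-1}\in\abgr$ is established, both composites are identities, $\bar\psi$ and $\bar{s_0}$ are mutually inverse, and in particular $|\tilde{\mathcal X}|=|{\mathcal X}|$.
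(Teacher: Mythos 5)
Your proposal is correct and takes essentially the same route as the paper: both treat $\bar{s_0}\circ\bar{\psi}=\mathrm{id}$ as the only substantive point, and both extract the crucial containment of the vector part of $\delta$ from the condition $\gamma\delta^{-1}\in\tgr\alpha\tgr$ (never from $\delta\in\tgr\beta\tgr$), which is exactly where Assumption \ref{assumption:s} enters. The only cosmetic difference is that you expand $\gamma\delta^{-1}=(Y_1,\vy_1)(A,\vzero)(Y_2,\vy_2)$ by hand to get $\vc D^{-1}\in\abgr$, whereas the paper obtains the same membership (in the form $\vx\in B^{-1}\abgr BX$) by citing Proposition \ref{BasicTheory:DoubleCoset}, whose proof is that very computation.
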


\begin{proof}
It is essential to prove $\bar{s_0}\circ \bar{\psi}$ is the identity.
For $(X,\vx) \in \tgr$, we put
$\delta =\beta (X,\vx)=(BX,B\vx)$,
and assume 
$\gamma\delta^{-1} \in  \tgr \alpha \tgr$.
It suffices to prove \[\tgr\delta =\tgr (BX,\vzero).\]
Since
$\gamma\delta^{-1} = (CX^{-1}B^{-1},-CX^{-1}\vx(BX)^{-1})$,
we have 
$CX^{-1}B^{-1} \in \gr A \gr$.
Hence $(CX^{-1}B^{-1},\vzero) \in \tgr \alpha \tgr$.
Thus
\[(CX^{-1}B^{-1},-CX^{-1}\vx(BX)^{-1}) \in \tgr(CX^{-1}B^{-1},\vzero)\tgr.\]
By Proposition \ref{BasicTheory:DoubleCoset},
$-CX^{-1}\vx(BX)^{-1} \in CX^{-1}B^{-1}\abgr$,
namely, $\vx \in B^{-1}\abgr BX$.
Put $\vx = B^{-1}\vy BX$, then
\[\delta = (BX,\vy BX) = (E,\vy)(BX,\vzero).\]
This completes the proof.
\end{proof}

By the above two lemmas, we have:

\begin{proposition}\label{section}
	The natural injection  $ s: R \to \tR $ defined by
	\[T(A) \mapsto \tT(A,\vzero), \quad for \ A \in \mo, \]	
	is a ring homomorphism .
\end{proposition}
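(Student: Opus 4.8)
The plan is to prove that $s$ is a ring homomorphism by checking three things: that it respects products, that it sends the identity to the identity, and that it is injective. Since $s$ is prescribed on the free $\Z$-basis $\{T(A)\}$ of $R$ indexed by $\gr\bs\mo/\gr$ and then extended by linearity, well-definedness is automatic and the entire content lies in multiplicativity. The two preceding lemmas are tailored precisely for this: Lemma~\ref{mult1to11} will match the \emph{supports} of the two products, while Lemma~\ref{mult1to12} will match their \emph{structure constants}.

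First I would fix $A,B \in \mo$ and expand the product in $\tR$,
\[
s(T(A))\,s(T(B)) = \tT(A,\vzero)\,\tT(B,\vzero) = \sum_{\tgr(C',\vc)\tgr} \tilde m_{(C',\vc)}\,\tT(C',\vc),
\]
the sum running over $\tilde{\mathcal X} = \tgr\bs\tgr(A,\vzero)\tgr(B,\vzero)\tgr/\tgr$, since a structure constant vanishes outside this triple double coset. Lemma~\ref{mult1to11} supplies mutually inverse bijections $\bar\psi$ and $\bar{s_0}$ between $\tilde{\mathcal X}$ and $\mathcal X = \gr\bs\gr A\gr B\gr/\gr$, sending $\gr C\gr \mapsto \tgr(C,\vzero)\tgr$. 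Consequently every double coset occurring in the product is of the shape $\tgr(C,\vzero)\tgr$ with $\gr C\gr$ ranging over the support of $T(A)T(B)$, and each such $\tgr(C,\vzero)\tgr$ equals $s(T(C))$.

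Next I would match coefficients. By the first formula defining the Hecke product, $\tilde m_{(C,\vzero)}$ is the cardinality of $\{\tgr\delta \in \tgr\bs\tgr(B,\vzero)\tgr : (C,\vzero)\delta^{-1} \in \tgr(A,\vzero)\tgr\}$, and $m_C = |\{\gr D \in \gr\bs\gr B\gr : CD^{-1}\in\gr A\gr\}|$. These are exactly the sets $\tilde{\mathcal X}$ and $\mathcal X$ of Lemma~\ref{mult1to12} taken with $\alpha=(A,\vzero)$, $\beta=(B,\vzero)$, $\gamma=(C,\vzero)$, so that lemma yields $\tilde m_{(C,\vzero)} = m_C$. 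Combining the two lemmas term by term gives
\[
s(T(A))\,s(T(B)) = \sum_{\gr C\gr} m_C\,\tT(C,\vzero) = s\Big(\sum_{\gr C\gr} m_C\,T(C)\Big) = s\big(T(A)T(B)\big).
\]
It then remains only to observe that $s(T(E)) = \tT(E,\vzero)$ is the unit of $\tR$, and that $s$ is injective because $\psi$ returns $\tgr(A,\vzero)\tgr$ to $\gr A\gr$, so distinct basis elements of $R$ have distinct images.

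The main obstacle is organizational rather than computational: one must ensure that the two lemmas jointly account for \emph{both} the index set and the multiplicities of the product, and in particular that the double coset $\tgr(C,\vzero)\tgr$ whose coefficient Lemma~\ref{mult1to12} computes is the very one picked out by the bijection $\bar{s_0}$ of Lemma~\ref{mult1to11}. Both lemmas rely on Assumption~\ref{assumption:s} ($\abgr A \subset A\abgr$) to force the $\abgr$-component of the relevant double cosets down to $\vzero$; since I take the lemmas as established, no further delicate point arises beyond this bookkeeping.
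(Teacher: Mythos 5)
Your proof is correct and follows the paper's own route exactly: the paper derives Proposition \ref{section} directly from Lemmas \ref{mult1to11} and \ref{mult1to12} (``By the above two lemmas, we have''), and your write-up just makes explicit the bookkeeping — Lemma \ref{mult1to11} matching supports, Lemma \ref{mult1to12} matching structure constants, plus the routine unit and injectivity checks.
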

\bigskip

\subsection{The endomorphism $\theta$}
\label{section:TheEndomorphismTheta}
\newcommand{\ecent}
{
C_0
}
%
Let $\ecent$ be an element of the center of $\mo$.
First, we calculate the products $\tilde{T}(\ecent,\vzero)\tilde{T}( A ,\va )$ and $\tilde{T}( A ,\va )\tilde{T}(\ecent,\vzero)$ for each $(A,\va) \in \tmo$.

\begin{lemma}\label{BasicTheory:RightAction}
For all $(A,\va) \in \tmo$,
	\[\tilde{T}(\ecent,\vzero)  \tilde{T}( A ,\va )  = \tilde{T}(\ecent  A , \ecent \va ).\]
\end{lemma}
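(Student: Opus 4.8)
The plan is to evaluate $\tT(\ecent,\vzero)\tT(A,\va)=\sum_{D}m_{D}\,\tT(D)$ directly from the definition of the product in $\tR$, where $D$ runs over $\tgr\bs\tmo/\tgr$. Since $m_{D}\neq 0$ precisely when $D\in\tgr(\ecent,\vzero)\tgr(A,\va)\tgr$, I would first show that this triple product is the single double coset $\tgr(\ecent A,\ecent\va)\tgr$, and then verify that the one surviving structure constant equals $1$.

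For the first step I fix $(Y,\vy)\in\tgr$ and compute $(\ecent,\vzero)(Y,\vy)(A,\va)=(\ecent YA,\ \ecent Y\va+\ecent\vy A)$. Left multiplying by $(Y^{-1},\vzero)\in\tgr$ and using that $\ecent$ is central in $\mo$ (so that $Y^{-1}\ecent Y=\ecent$, both in $\mo$ and as an operator on $\abgr'$) brings the first coordinate to $\ecent A$ and the second to $\ecent\va+Y^{-1}\ecent\vy A$. Here $Y^{-1}\ecent\vy=\ecent(Y^{-1}\vy)\in\ecent\abgr$ because $Y^{-1}\in\gr\subseteq\mo$, and Assumption \ref{assumption:s} gives $\abgr A\subseteq A\abgr$, whence $Y^{-1}\ecent\vy A\in\ecent\abgr A\subseteq\ecent A\abgr\subseteq\ecent A\abgr+\abgr\ecent A$. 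Proposition \ref{BasicTheory:DoubleCoset} (with $X=E$) then places $(\ecent,\vzero)(Y,\vy)(A,\va)$ in $\tgr(\ecent A,\ecent\va)\tgr$. As $(Y,\vy)$ is arbitrary and the choice $(Y,\vy)=(E,\vzero)$ already returns $(\ecent A,\ecent\va)$, the triple product is exactly $\tgr(\ecent A,\ecent\va)\tgr$.

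It then remains to compute the structure constant at $(\ecent A,\ecent\va)$, which by the definition of the product equals $m=\bigl|\{\,\tgr\beta\in\tgr\bs\tgr(A,\va)\tgr \mid (\ecent A,\ecent\va)\beta^{-1}\in\tgr(\ecent,\vzero)\tgr\,\}\bigr|$. Writing $\beta=(B,\vb)$, the first coordinate of $(\ecent A,\ecent\va)\beta^{-1}$ is $\ecent AB^{-1}$, which can lie in $\gr\ecent\gr=\ecent\gr$ only when $B\in\gr A$; so I restrict to left cosets lying over the single coset $\gr A$ and, after a left translation by $\tgr$, take the representative $\beta=(A,\vb)$. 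A short computation gives $(\ecent A,\ecent\va)(A,\vb)^{-1}=(\ecent,\ \ecent(\va-\vb)A^{-1})$, and by Proposition \ref{BasicTheory:DoubleCoset}, together with $g\abgr g^{-1}=\abgr$ for $g\in\gr$ (which follows from Assumption \ref{assumption:s} applied to $g$ and $g^{-1}$), membership in $\tgr(\ecent,\vzero)\tgr$ reduces to $\ecent(\va-\vb)A^{-1}\in\ecent\abgr$, that is, $\vb\equiv\va\bmod\abgr A$. Since left cosets over $\gr A$ with representative $(A,\vb)$ are distinguished exactly by $\vb\bmod\abgr A$, the unique one meeting the condition is $\tgr(A,\va)$, so $m=1$ and the lemma follows.

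I expect the real content to sit in this last count rather than in the coset bookkeeping. Because the bimodule action of $\mo$ on $\abgr$ need not commute with $\ecent$, the double coset $\tgr(\ecent,\vzero)\tgr$ generally has degree $[\ecent\abgr:\abgr\ecent]>1$ by Corollary \ref{cor:BasicTheory:Degree:Degree}, so $(\ecent,\vzero)$ is not ``group-like'' and the formula is not a formal identity. The delicate point is that among the $[A\abgr:\abgr A]\,\bigl|\gr^A*(\va\bmod A\abgr+\abgr A)\bigr|$ left cosets of $\tgr(A,\va)\tgr$ lying over $\gr A$, precisely one satisfies the defining condition; this rests on showing that the orbit under $\gr^{\ecent}=\gr$ collapses, i.e. that $g\abgr g^{-1}=\abgr$, so that membership in $\tgr(\ecent,\vzero)\tgr$ is controlled by the plain congruence $\vb\equiv\va\bmod\abgr A$. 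The centrality of $\ecent$ and Assumption \ref{assumption:s} are exactly what force this collapse.
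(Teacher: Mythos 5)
Your proof is correct, but it parts ways with the paper's at the decisive step. The first half agrees in substance: the paper obtains $\tgr(C_0,\vzero)\tgr(A,\va)\tgr=\tgr(C_0A,C_0\va)\tgr$ in one line from the identity $(C_0,\vzero)(X,\vx)(A,\va)=(X,\vzero)(C_0A,C_0\va)(E,A^{-1}X^{-1}\vx A)$, whereas you normalize by a left translation and quote Proposition \ref{BasicTheory:DoubleCoset}; this is equivalent bookkeeping. For the surviving constant, however, the paper never counts cosets: having $\tilde{T}(C_0,\vzero)\tilde{T}(A,\va)=c\,\tilde{T}(C_0A,C_0\va)$, it applies the ring homomorphism $\deg$ to get $c=\deg\tilde{T}(C_0,\vzero)\deg\tilde{T}(A,\va)/\deg\tilde{T}(C_0A,C_0\va)$ and evaluates the three degrees by Corollary \ref{cor:BasicTheory:Degree:Degree}, the factors cancelling because $\gr^{C_0A}=\gr^{A}$, $X\ast(C_0\va)=C_0(X\ast\va)$, and multiplication by $C_0$ and $A$ is injective on $\abgr'$. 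You instead compute the multiplicity from its definition, parametrizing the left cosets of $\tgr(A,\va)\tgr$ lying over $\gr A$ by $\vb \bmod \abgr A$ and showing that exactly the class of $\va$ meets the condition; this is more elementary and self-contained (it bypasses the degree formula entirely), while the paper's degree trick is shorter and is reused verbatim in the very next lemma for $\tilde{T}(A,\va)\tilde{T}(C_0,\vzero)$, where the constant is a genuine ratio of orbit sizes and a direct count like yours would have to confront the nontrivial $\gr^{A}$-orbit structure. Two cosmetic points: when you apply Proposition \ref{BasicTheory:DoubleCoset} to $(C_0,\,C_0(\va-\vb)A^{-1})$, this element may a priori lie outside $\tmo$, but then it is trivially not in $\tgr(C_0,\vzero)\tgr\subset\tmo$, so your criterion $\va-\vb\in\abgr A$ stands; and $g\abgr g^{-1}=\abgr$ for $g\in\gr$ already follows from $\abgr$ being a two-sided $\mo$-module with $\gr$ a group inside $\mo$, without Assumption \ref{assumption:s} --- indeed the ``orbit collapse'' you worry about is automatic, since the $\ast$-action of $\gr^{C_0}=\gr$ fixes $\vzero$, so membership in $\tgr(C_0,\vzero)\tgr$ reduces at once to $\vc\in C_0\abgr+\abgr C_0$, and Assumption \ref{assumption:s} enters only to collapse this sum to $C_0\abgr$.
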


\begin{proof}
{
\newcommand{\tmp}
{
(\ecent  ,\vzero)	
}
\newcommand{\tmpb}
{
(X,\vx)
}
\newcommand{\tmpc}
{
(A,\va) 
}
\newcommand{\tmpd}
{
(\ecent   X   A , \ecent  X  \va  + \ecent \vx   A )
}
\newcommand{\tmpe}
{
(\ecent   A , \ecent \va  + \ecent  X ^{-1} \vx   A )
}
\newcommand{\tmpf}
{
(\ecent  A  ,\ecent \va )
}
For $(X,\vx) \in \tgr$,
\[(\ecent, \vzero)(X,\vx)(A,\va)
= (X,\vzero)(\ecent A,\ecent \va)(E,A^{-1}X^{-1}\vx A). \]
Hence
$\tgr (\ecent, \vzero)\tgr(A,\va)\tgr = \tgr(\ecent A,\ecent \va)\tgr$.
Thus
there exists a positive integer $c$ such that 
\[\tilde{T}(\ecent,\vzero)  \tilde{T}( A ,\va )  = c\tilde{T}(\ecent  A , \ecent \va ).\]
Hence we have
\[c = \frac{\deg \tilde{T}(\ecent,\vzero)  \deg \tilde{T}( A ,\va )  }{\deg \tilde{T}(\ecent  A , \ecent \va )} .\]
By Corollary \ref{cor:BasicTheory:Degree:Degree}, we have $c=1$, which completes the proof.
} 
 {
\newcommand{\tmp}
{
	 \grsuba \ast (\ecent \va  \mod \ecent A \abgr )
}
\newcommand{\tmpb}
{
	 \grsuba \ast (\va  \mod   A \abgr )
}
%
}
\end{proof}

\begin{lemma}
{
\newcommand{\tmp}
{
	\left|  \grsuba* (\va   \mod A  \abgr )\right|
}
\newcommand{\tmpb}
{
	\left|  \grsuba*
		(\ecent^{-1} \va  \ecent \mod A   \abgr )
	\right|
}
For all $(A,\va) \in \tmo$,
\[\tilde{T}( A ,\va ) \tilde{T}(\ecent,\vzero) =  \frac{\tmp}{\tmpb}
\tT( \ecent A, \va \ecent).
\] 
}	
\end{lemma}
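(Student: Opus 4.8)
The plan is to follow the template of Lemma~\ref{BasicTheory:RightAction}: first I would show that the double coset product $\tgr(A,\va)\tgr(\ecent,\vzero)\tgr$ is concentrated on the single double coset $\tgr(\ecent A,\va\ecent)\tgr$, so that $\tT(A,\va)\tT(\ecent,\vzero)=c\,\tT(\ecent A,\va\ecent)$ for some positive integer $c$, and then read off $c$ by comparing degrees. The new feature, absent in Lemma~\ref{BasicTheory:RightAction}, is that $c$ is no longer $1$; the entire content of the statement is that the resulting degree ratio collapses to the displayed quotient of two orbit cardinalities.

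For the collapse, I would take an arbitrary $(X,\vx)\in\tgr$ and compute $(A,\va)(X,\vx)(\ecent,\vzero)=(\ecent AX,(A\vx+\va X)\ecent)$, using that $\ecent$ is central so $AX\ecent=\ecent AX$. Right-multiplying by $(X^{-1},\vzero)\in\tgr$ restores the first coordinate to $\ecent A$ and produces the representative $(\ecent A,\ \va\ecent+A\vx X^{-1}\ecent)$. Since $\abgr$ is a both-sides $\mo$-module and $X^{-1}\in\gr\subset\mo$, we have $\vx X^{-1}\in\abgr$, and centrality gives $A\vx X^{-1}\ecent=\ecent A(\vx X^{-1})\in\ecent A\abgr$. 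Hence Proposition~\ref{BasicTheory:DoubleCoset}, applied with the identity element, identifies this double coset with $\tgr(\ecent A,\va\ecent)\tgr$, which is exactly the collapse; counting then yields $c=\deg\tT(A,\va)\,\deg\tT(\ecent,\vzero)/\deg\tT(\ecent A,\va\ecent)$.

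Next I would expand the three degrees with Corollary~\ref{cor:BasicTheory:Degree:Degree}. Under Assumption~\ref{assumption:s} the terms $A\abgr\cap\abgr A$ and $A\abgr+\abgr A$ simplify to $\abgr A$ and $A\abgr$, and the centrality of $\ecent$ in $\mo$ gives $\gr_\ecent=\gr$, $\gr_{\ecent A}=\gr_A$ and $\gr^{\ecent A}=\gr^A$; in particular the factors $[\gr:\gr_A]$ cancel and $\deg\tT(\ecent,\vzero)=[\ecent\abgr:\abgr\ecent]$. What remains is the cancellation of the module-index factors against one another and the correct matching of the surviving orbit factors.

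The heart of the argument --- and what I expect to be the main obstacle --- is these last two reductions, where one must keep the left and right $\mo$-actions carefully apart. For the indices, the chain $\ecent A\abgr\supset\ecent\abgr A\supset\abgr\ecent A$ together with the bijections supplied by left-multiplication by $\ecent^{-1}$ and right-multiplication by $A^{-1}$, and multiplicativity of indices, yields
\[
[\ecent A\abgr:\abgr\ecent A]=[A\abgr:\abgr A]\,[\ecent\abgr:\abgr\ecent],
\]
which cancels the index contributions of the numerator against that of the denominator. For the orbits, I would check that left-multiplication by $\ecent^{-1}$ is a $\gr^A$-equivariant bijection $\abgr'/\ecent A\abgr\to\abgr'/A\abgr$ carrying $\va\ecent$ to $\ecent^{-1}\va\ecent$; writing $\ast_A$ and $\ast_{\ecent A}$ for the $\ast$-actions attached to $A$ and $\ecent A$, the required equivariance $\ecent^{-1}(Y\ast_{\ecent A}\vx)=Y\ast_A(\ecent^{-1}\vx)$ for $Y\in\gr^A$ follows directly from the centrality of $\ecent$, so that $|\gr^{\ecent A}\ast_{\ecent A}(\va\ecent \mod \ecent A\abgr)|=|\gr^A\ast_A(\ecent^{-1}\va\ecent \mod A\abgr)|$. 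Substituting both reductions leaves precisely $|\gr^A\ast(\va \mod A\abgr)|/|\gr^A\ast(\ecent^{-1}\va\ecent \mod A\abgr)|$, which is the claimed coefficient.
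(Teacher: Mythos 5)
Your route is exactly the paper's: show the product is supported on the single double coset $\tgr(C_0A,\va C_0)\tgr$, deduce $\tilde{T}(A,\va)\,\tilde{T}(C_0,\vzero)=c\,\tilde{T}(C_0A,\va C_0)$ for a positive integer $c$, and evaluate $c$ as the degree ratio via Corollary~\ref{cor:BasicTheory:Degree:Degree}. The paper states the resulting value of $c$ without carrying out the cancellations; your reductions are correct fill-ins of precisely that step: under Assumption~\ref{assumption:s} the degree formula simplifies as you say, centrality gives $\gr_{C_0A}=\gr_A$ and $\gr^{C_0A}=\grsuba$, the chain $C_0A\abgr\supset C_0\abgr A\supset\abgr C_0A$ yields $[C_0A\abgr:\abgr C_0A]=[A\abgr:\abgr A][C_0\abgr:\abgr C_0]$, and left multiplication by $C_0^{-1}$ is a $\grsuba$-equivariant bijection matching the orbit of $\va C_0\bmod C_0A\abgr$ with that of $C_0^{-1}\va C_0\bmod A\abgr$ (your equivariance check correctly uses only $C_0^{-1}Y^{-1}C_0=Y^{-1}$ for $Y\in\gr$).

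However, one step in your collapse argument is wrong as written. You justify $A\vx X^{-1}C_0\in C_0A\abgr$ by the equality $A\vx X^{-1}C_0=C_0A(\vx X^{-1})$, ``by centrality.'' Centrality of $C_0$ in $\mo$ does not allow trading the \emph{right} action of $C_0$ on $\abgr$ for its \emph{left} action: these are different operations, and in the paper's main application they genuinely differ --- for $C_0=pE$ one has $\va\cdot C_0=\mu(pE)\va=p^2\va$ while $C_0\cdot\va=p\va$. The membership you need is nevertheless true, and follows from Assumption~\ref{assumption:s} rather than from any such equality: since $\vx X^{-1}\in\abgr$, one has $(\vx X^{-1})C_0\in\abgr C_0\subset C_0\abgr$, hence $A\vx X^{-1}C_0\in AC_0\abgr=C_0A\abgr$, where centrality is now invoked only inside $\mo$, which is legitimate; then Proposition~\ref{BasicTheory:DoubleCoset} with $X=E$ gives the collapse. (The paper's own version is slicker still: it pushes $(C_0,\vzero)$ past the middle factor via $(X,\vx)(C_0,\vzero)=(C_0,\vzero)(X,C_0^{-1}\vx C_0)$, with $C_0^{-1}\vx C_0\in\abgr$ again by Assumption~\ref{assumption:s}.) With that one line repaired, your proof is complete and in fact more detailed than the paper's.
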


\begin{proof}
{
\newcommand{\tmpc}
{
(\ecent  ,\vzero)	
}
\newcommand{\tmpb}
{
( X ,\vx )
}
\newcommand{\tmp}
{
( A ,\va ) 
}
\newcommand{\tmpd}
{
(\ecent   A   X  , \va   X  \ecent +   A  \vx   \ecent)
}
\newcommand{\tmpe}
{
(\ecent   A , \va  \ecent +    A  \vx   X ^{-1} \ecent)
}
\newcommand{\tmpf}
{
(\ecent  A  ,\va \ecent )
}
For $\tmpb  \in \tgr$,
 \begin{eqnarray*}
(X,\vx)(\ecent,\vzero) = (\ecent,\vzero)(X,\ecent^{-1} \vx \ecent).
 \end{eqnarray*}
Hence
$\tgr \tmp \tgr \tmpc \tgr 
		=\tgr \tmpf \tgr$.
Thus there exists a positive integer $c$ such that 
\[ \tilde{T}( A ,\va )  \tilde{T}(\ecent,\vzero) = c\tilde{T}(\ecent  A , \va \ecent ).\]
By Corollary \ref{cor:BasicTheory:Degree:Degree}, 
we have
%
%
}
{
\newcommand{\tmp}
{
	\left|  \grsuba *(\va   \mod A  \abgr )\right|
}
\newcommand{\tmpb}
{
	\left|  \grsuba *
		(\ecent^{-1} \va  \ecent \mod  A   \abgr )
	\right|
}
\[c =  \frac{\tmp}{\tmpb},\] which completes the proof.
}
\end{proof}

By the lemma \ref{BasicTheory:RightAction}, the left multiplication by $\tT(C_0,\vzero)$ gives a injective map from the basis $\{\tT(\xi)\}_{\xi \in \tgr \backslash \tmo / \tgr}$ of $\tR$ to the same set.
This implies that $\tT(C_0,\vzero)$ is a left nonzero divisor.
Thus we can define the following element.

\begin{definition}
The linear endomorphism $\theta = \theta_{C_0}$ of $\tR$ is defined by the requirement that 
for each $(A,\va) \in \tmo$
\[\tT{(A,\va)} \tT{(C_0,\vzero)} = \tT{(C_0,\vzero)}\tT{(A,\va)}^{\theta}.\]
\end{definition}

It is clear that $\theta$ is a ring homomorphism.
$\tT{(A,\va)}^{\theta}$ has a following formula.

\begin{corollary}\label{cor:FormulaTheta}
For all $(A,\va) \in \tmo$, 
\[\tilde{T}( A ,\va )^{\theta} =
\frac{
\left|  \grsuba* (\va   \mod A  \abgr )\right|
}
{
\left|  \grsuba* 
	(\ecent^{-1} \va  \ecent \mod A   \abgr )
\right|
}
\tT(A,C_0^{-1} \va C_0).
\] 
Especially, $\xi^{\theta} = \xi$ for each $\xi \in {\rm Im}(s)$.
\end{corollary}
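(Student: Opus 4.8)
The plan is to read the formula for $\theta$ straight off the two product computations already established, using nothing more than the defining relation $\tT(A,\va)\tT(C_0,\vzero)=\tT(C_0,\vzero)\tT(A,\va)^{\theta}$ together with the fact (just verified via Lemma \ref{BasicTheory:RightAction}) that $\tT(C_0,\vzero)$ is a left nonzero divisor. So the argument is essentially formal once the pieces are lined up correctly.

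First I would rewrite the left-hand side of the defining relation by the immediately preceding lemma, obtaining
\[
\tT(A,\va)\tT(C_0,\vzero)
=\frac{\left|\grsuba*(\va\bmod A\abgr)\right|}{\left|\grsuba*(C_0^{-1}\va C_0\bmod A\abgr)\right|}\,\tT(C_0 A,\va C_0).
\]
The next step is to recognize $\tT(C_0 A,\va C_0)$ as $\tT(C_0,\vzero)$ times a single basis element. Before doing so I must check that $(A,C_0^{-1}\va C_0)$ genuinely lies in $\tmo$, i.e.\ that $C_0^{-1}\va C_0\in\abgr$; this is precisely where Assumption \ref{assumption:s} is used. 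Indeed $\va C_0\in\abgr C_0\subset C_0\abgr$, so there is a unique $\vb\in\abgr$ (unique because $C_0$ is invertible in the ambient $G$-module $\abgr'$) with $C_0\vb=\va C_0$, namely $\vb=C_0^{-1}\va C_0$. Applying Lemma \ref{BasicTheory:RightAction} to $(A,\vb)$ and using the centrality of $C_0$ to compute $C_0\vb=C_0(C_0^{-1}\va C_0)=\va C_0$, I get
\[
\tT(C_0,\vzero)\tT(A,C_0^{-1}\va C_0)=\tT(C_0 A,\va C_0).
\]

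Substituting this back, the left-hand side of the defining relation becomes $\tT(C_0,\vzero)$ times the asserted right-hand side; comparing with $\tT(C_0,\vzero)\tT(A,\va)^{\theta}$ and cancelling the left nonzero divisor $\tT(C_0,\vzero)$ yields the formula. The final clause then follows by setting $\va=\vzero$: the ratio collapses to $1$ and $\tT(A,C_0^{-1}\vzero C_0)=\tT(A,\vzero)=s(T(A))$, so $\theta$ fixes $\mathrm{Im}(s)$ pointwise. I expect the only real subtlety to be the membership check $C_0^{-1}\va C_0\in\abgr$, which guarantees that the target generator is well defined and which rests squarely on Assumption \ref{assumption:s}; everything else is bookkeeping with the two product formulas and the cancellation property of $\tT(C_0,\vzero)$.
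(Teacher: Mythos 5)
Your proof is correct and is essentially the paper's own (implicit) argument: the corollary is stated without proof precisely because it follows, as you show, by combining Lemma \ref{BasicTheory:RightAction} with the preceding product lemma $\tilde{T}(A,\va)\tilde{T}(C_0,\vzero)=c\,\tilde{T}(C_0A,\va C_0)$ and cancelling the left nonzero divisor $\tilde{T}(C_0,\vzero)$, then setting $\va=\vzero$ and using linearity for the statement about ${\rm Im}(s)$. Your explicit verification that $C_0^{-1}\va C_0\in\abgr$ via Assumption \ref{assumption:s}, so that the target generator $\tilde{T}(A,C_0^{-1}\va C_0)$ is well defined, is exactly the one detail the paper leaves tacit, and you place it correctly.
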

\bigskip

%
%
%
%
{
\renewcommand{\tT}[1]{\tilde{T}\left( #1\right)}
	Let ${\cal X}$ be a subset of $\tmo$ such that $\tgr \bs \tgr{\cal X}\tgr / \tgr$ is a finite set. 
	Then we define  
	\[\tT{{\cal X}}=\sum_{\tgr(A,\va)\tgr \in \tgr \bs \tgr {\cal X} \tgr / \tgr} \tT{A , \va }.\]
	And put 
	\[
	\begin{matrix}
	\tT{{\cal A}, {\mathcal N}}&=&\tT{{\cal A} \times  {\mathcal N}},
	\\  
	\tT{A, {\mathcal N}}&=&\tT{\set{A} \times  {\mathcal N}},
	\\
	\tT{A}&=&\tT{A, \abgr},
	\end{matrix}
	\]
	for a finite subset ${\cal A}$ of $\mo$, $A \in \mo$, and a subset ${\mathcal N}$ of $\abgr$.
By Proposition \ref{BasicTheory:DoubleCoset},
we can define $\tT{A,{\mathfrak a}}$ for ${\mathfrak a} \in \grsuba \bs \abgr /A \abgr$ in an obvious way.  
Then, we easily see  that   $\tT{A, {\mathcal N}}=\sum_{{\mathfrak a}} \tT{A,{\mathfrak a}}$, where ${\mathfrak a}$ runs through
the set \[\gr^A \bs \left( \gr^A{\mathcal N} + A\abgr \right) / A\abgr.\]


}


{
\renewcommand{\tT}[1]{\tilde{T}\left( #1\right)}
\newcommand{\tmp}
{
	f_{\ecent}
}
\newcommand{\tmpb}
{
	\va  \mod  A  \abgr
}
\newcommand{\tmpc}
{
	\ecent^{-1} \va  \ecent \mod  A  \abgr
}
\newcommand{\tmpd}
{
	{\mathcal N}
}
\newcommand{\tmpe}
{
	{\mathcal N}'
}
\newcommand{\tmpf}
{
	\abgr /  A  \abgr 
}
\newcommand{\tmpg}
{
	 \left( A  \abgr + \ecent^{-1} \abgr \ecent \right) /  A  \abgr
}
\newcommand{\tmpi}
{
	{\mathfrak a}  \in  \grsuba \backslash \tmpd_A
}
\newcommand{\tmpj}
{
	 {\mathfrak b}  \in  \grsuba \backslash \tmpe_A
}
\newcommand{\tmpk}
{
	 {\mathfrak a}     \in  \bar{f_A}^{-1}(  {\mathfrak b}   )
}
\newcommand{\tmpl}
{
	\frac{\card{ \varphi_{\tmpd}^{-1}({\mathfrak a})  }}{ \card{ \varphi_{\tmpd'}^{-1}(\bar{f_A}({\mathfrak a} ))  }}
}
\newcommand{\tmpm}
{
	\frac{\card{ \varphi_{\tmpd}^{-1}({\mathfrak a})      }}{\card{ \varphi_{\tmpe}^{-1}({\mathfrak b})   }}
}
\newcommand{\tmpn}
{
	\frac{\card{f_A^{-1}( \varphi_{\tmpe}^{-1}({\mathfrak b})   )}}{\card{ \varphi_{\tmpe}^{-1}({\mathfrak b})   }}
}

\newcommand{\im}
{
	{\rm Im}
}

\begin{proposition}\label{Thetheta:tT(A)theta}
	Let $\tmpd$ be a subgroup of $\abgr$
	such that $\gr^A \tmpd = \tmpd$, $\tmpd + A \abgr = \tmpd$.
	Put $\tmpe = \ecent^{-1} \tmpd \ecent $. 
	Let $\tmpd_A$ and $\tmpe_A$ be the images of $\tmpd$ and $\tmpe$ under the canonical map
	$\abgr \to \abgr / A \abgr$, respectively. 
	Then we have
	\[
	\tT{A,{\cal N}}^\theta =  \frac{\left|\tmpd_A\right|}{\left|\tmpe_A\right|}
	\tT{A,\tmpe}.
	\]
\end{proposition}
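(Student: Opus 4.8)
The plan is to reduce the statement to the single double-coset formula of Corollary~\ref{cor:FormulaTheta} and then reorganize the resulting sum by means of one structural observation: conjugation by $C_0$ intertwines the $\ast$-action of $\grsuba$ on $\abgr$. Write $\kappa(\va)=C_0^{-1}\va C_0$ for $\va\in\abgr$. By Assumption~\ref{assumption:s} we have $\abgr C_0\subseteq C_0\abgr$, so $\kappa$ maps $\abgr$ into $\abgr$, and since $C_0$ is central $\kappa(A\abgr)=A\,\kappa(\abgr)\subseteq A\abgr$; hence $\kappa$ descends to an additive endomorphism $\bar\kappa$ of $\abgr/A\abgr$. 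By hypothesis $\mathcal N$ is a subgroup with $A\abgr\subseteq\mathcal N$ and $\grsuba\mathcal N=\mathcal N$, so its image $\mathcal N_A=\mathcal N/A\abgr$ is a $\grsuba$-stable subgroup of $\abgr/A\abgr$, and $\bar\kappa(\mathcal N_A)=(\mathcal N'+A\abgr)/A\abgr=\mathcal N'_A$ with $\mathcal N'=\kappa(\mathcal N)$.

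First I would make the orbit decomposition explicit. Using $A\abgr+\abgr A=A\abgr$ (Assumption~\ref{assumption:s}) together with the remark preceding the statement, $\tilde{T}(A,\mathcal N)=\sum_{\mathfrak a}\tilde{T}(A,\mathfrak a)$, where $\mathfrak a$ runs over the $\grsuba$-orbits (under $\ast$) in $\mathcal N_A$. Since $\theta$ is additive, applying Corollary~\ref{cor:FormulaTheta} to a representative $\va$ of each orbit gives
\[
\tilde{T}(A,\mathcal N)^{\theta}
=\sum_{\mathfrak a}\frac{\lvert \mathfrak a\rvert}{\lvert \bar\kappa(\mathfrak a)\rvert}\,
\tilde{T}\bigl(A,\kappa(\va)\bigr),
\]
where $\lvert\,\cdot\,\rvert$ denotes the cardinality of a $\grsuba$-orbit in $\abgr/A\abgr$ and I have identified the orbit of $\kappa(\va)$ with $\bar\kappa(\mathfrak a)$.

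The key step is the equivariance $\bar\kappa(X\ast\vx)=X\ast\bar\kappa(\vx)$ for $X\in\grsuba$. This is where centrality of $C_0$ enters: for $X\in\grsuba\subseteq\mo$ one has $C_0^{-1}XC_0=X$, and $C_0$ commutes with $A^{\pm1}$ and $X^{\pm1}$ in $G$, so $\kappa(X\ast\va)=C_0^{-1}(X\va A^{-1}X^{-1}A)C_0=X\,\kappa(\va)\,A^{-1}X^{-1}A=X\ast\kappa(\va)$. Consequently $\bar\kappa\colon\mathcal N_A\to\mathcal N'_A$ is a surjective $\grsuba$-equivariant group homomorphism; it carries each orbit $\mathfrak a$ onto an orbit $\bar\kappa(\mathfrak a)$ of $\mathcal N'_A$, so the numerator and denominator of the coefficient above are genuinely the sizes of a source orbit and its image orbit.

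Finally I would collect the terms by image orbit. Let $K=\ker(\bar\kappa\colon\mathcal N_A\to\mathcal N'_A)$, so $\lvert K\rvert=\lvert\mathcal N_A\rvert/\lvert\mathcal N'_A\rvert$. Fixing a $\grsuba$-orbit $\mathfrak b\subseteq\mathcal N'_A$, the orbits $\mathfrak a$ with $\bar\kappa(\mathfrak a)=\mathfrak b$ partition $\bar\kappa^{-1}(\mathfrak b)$, so the coefficient of $\tilde{T}(A,\mathfrak b)$ equals $\tfrac{1}{\lvert\mathfrak b\rvert}\sum_{\bar\kappa(\mathfrak a)=\mathfrak b}\lvert\mathfrak a\rvert=\lvert\bar\kappa^{-1}(\mathfrak b)\rvert/\lvert\mathfrak b\rvert$. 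Because $\bar\kappa$ is a surjective homomorphism, every fiber over an element of $\mathfrak b$ has size $\lvert K\rvert$, whence $\lvert\bar\kappa^{-1}(\mathfrak b)\rvert=\lvert K\rvert\,\lvert\mathfrak b\rvert$ and the coefficient collapses to $\lvert K\rvert=\lvert\mathcal N_A\rvert/\lvert\mathcal N'_A\rvert$, independent of $\mathfrak b$. Summing over all orbits $\mathfrak b$ of $\mathcal N'_A$ (each of which is attained, by surjectivity) yields $\tilde{T}(A,\mathcal N)^{\theta}=\tfrac{\lvert\mathcal N_A\rvert}{\lvert\mathcal N'_A\rvert}\,\tilde{T}(A,\mathcal N')$, as claimed. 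I expect the main obstacle to be the equivariance observation together with the check that $\bar\kappa$ is well defined modulo $A\abgr$; once these are in place the rest is a constant-fiber count, and it is precisely the possible non-injectivity of $\bar\kappa$ that produces the index $\lvert\mathcal N_A\rvert/\lvert\mathcal N'_A\rvert$.
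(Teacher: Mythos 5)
Your proof is correct and takes essentially the same route as the paper's own argument: the paper likewise applies Corollary \ref{cor:FormulaTheta} orbitwise, regroups by image orbit via the map induced on ${\mathcal N}_A \to {\mathcal N}'_A$ by $\va \mapsto C_0^{-1}\va C_0$ (the paper's $f_A$, your $\bar\kappa$), and concludes with the constant-fiber count $\lvert \ker \rvert = \lvert {\mathcal N}_A\rvert / \lvert {\mathcal N}'_A\rvert$. Your explicit verifications of well-definedness modulo $A\abgr$ and of the $\grsuba$-equivariance of conjugation by the central element $C_0$ merely spell out steps the paper leaves implicit.
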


	\begin{proof}
	Let $f:\tmpd \to \tmpe$ be the morphism being 
	$(\va \mapsto \ecent^{-1} \va \ecent)$.
	Let $f_A$ and $\bar{f_A}$ be its derived maps 
	$\tmpd_A \to \tmpe_A$
	and
	$\grsuba \backslash \tmpd_A \to \grsuba \backslash \tmpe_A$,
	respectively. 
	Let $\varphi$ be the canonical map $\tmpf \to \grsuba \backslash \tmpf$.
	Put $\varphi_\tmpd = \varphi |_{\tmpd_A}$, $\varphi_{\tmpe} = \varphi |_{\tmpe_A}$.
	Since $\tmpd_A$ and $\tmpe_A$ are $\gr^A$-invariant sets, we have
	$\varphi^{-1}({\mathfrak a}) = \varphi_{\tmpd}^{-1}({\mathfrak a})$ and
	$\varphi^{-1}({\mathfrak b}) = \varphi_{\tmpe}^{-1}({\mathfrak b})$
	for each ${\mathfrak a} \in \grsuba \backslash \tmpd_A$, 
	${\mathfrak b} \in \grsuba \backslash \tmpe_A$.
	Hence
	\begin{eqnarray*}
	\tilde{T}(A,{\cal N})^{\theta} 	&=& \sum_{\tmpi}\tmpl\tilde{T}( A , \bar{f_A}({\mathfrak a}) ). \\ 
	\end{eqnarray*}
Hence
	\begin{eqnarray*}
		\tilde{T}(A,{\cal N})^{\theta}	&=& \sum_{\tmpj} \ 
						\sum_{\tmpk}\tmpm\tilde{T}( A ,     {\mathfrak b}   ).  \\ 
	\end{eqnarray*}
	Since $\bar{f_A} \circ \varphi_{\tmpd} 
		= \varphi_{\tmpe} \circ f_A$,
	\begin{eqnarray*}
		\tilde{T}(A,{\cal N})^{\theta} 
		&=& \sum_{\tmpj} \tmpn\tilde{T}( A , {\mathfrak b}   ).  \\ 
	\end{eqnarray*}
	Since $\card{f_A^{-1}( \varphi_{\tmpe}^{-1}({\mathfrak b})   )} 
=  \card{ \varphi_{\tmpe}^{-1}({\mathfrak b})   }\card{\ker f_A}
= \card{ \varphi_{\tmpe}^{-1}({\mathfrak b})   }\card{\tmpd_A}/\card{\tmpe_A}
$,
	\[\tilde{T}(A,{\cal N})^{\theta}	=  
	\frac{\card{\tmpd_A}}{\card{\tmpe_A}}\sum_{\tmpj}  \tilde{T}( A ,  {\mathfrak b}   ).\]
	This completes the proof.
	\end{proof}
}

%
\subsection{The projection $\phi$}
Let $\ecent$ be an element of the center of $\mo$ satisfying the following assumption:
\begin{assumption}\label{assumption:phi}
$\ecent^{-1} \abgr\ecent \not = \abgr$.
\end{assumption}
\medskip
Since the endomorphism of $\abgr$ being $(\va \mapsto \ecent^{-1} \va \ecent)$ is injective, 
the sequence $\{\ecent^{-n} \abgr \ecent^n\}_n$ is a strictly decreasing sequence of sets.
Since $\abgr/A\abgr$ is a finite set, there exists a positive integer $m$
such that $C_0^{-m} \abgr C_0^m \subset A\abgr$.
Hence the sequence $\set{C_0^{-n} \va C_0^n \mod A \abgr}_n$
is stable and its limit is the identity element for each $\va \in \abgr$. Thus, Proposition \ref{BasicTheory:DoubleCoset} and Corollary \ref{cor:FormulaTheta} imply the stablity 
of the sequence $\{\tT(A,\va)^{\theta^n}\}_n$ for each $(A,\va)\in \tmo$ and that its limit is an element of the image of $s$.
 
\begin{definition}
For each $(A,\va)\in \tmo$, the limit of the sequence $\{\tT(A,\va)^{\theta^n}\}_n$ is denoted by $\tT(A,\va)^{\theta^\infty}$, and
$\phi : \tR \to R$ is defined by $s^{-1} \circ \theta^\infty$.
\end{definition}

Note that both $\theta^\infty$ and $\phi$ are naturally ring homomorphisms.
\smallskip

\begin{proposition}\label{prop:propertymorphisms}
The morphism $\phi$ satisfies the following properties:
\begin{enumerate}
\item $\phi \circ s = id_R$. 
\item $\phi \circ \theta = \phi$.
\item $\tT(A)^\phi = \card{\abgr / A \abgr} T(A).$
\end{enumerate}
\end{proposition}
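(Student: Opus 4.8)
The plan is to take the three assertions in increasing order of difficulty: (1) and (2) are formal consequences of the way $\theta^\infty$ was built, whereas (3) carries the genuine content and will rest on the explicit formula of Corollary \ref{cor:FormulaTheta} together with an orbit count. For (1) I would invoke the last sentence of Corollary \ref{cor:FormulaTheta}, namely $\xi^\theta = \xi$ for every $\xi \in \mathrm{Im}(s)$. Thus each $\theta^n$ fixes $\mathrm{Im}(s)$ pointwise, hence so does the stable limit $\theta^\infty$, and therefore $\phi \circ s = s^{-1}\circ\theta^\infty\circ s = s^{-1}\circ s = id_R$. For (2) I would use that the sequence $\{\tT(A,\va)^{\theta^n}\}_n$ is eventually constant (exactly the stabilization recorded before the definition of $\phi$), so its shift $\{\tT(A,\va)^{\theta^{n+1}}\}_n$ has the same limit; this says $\theta^\infty\circ\theta = \theta^\infty$ on each basis element $\tT(A,\va)$, hence on all of $\tR$ since $\theta^\infty$ is additive. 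Applying $s^{-1}$ gives $\phi\circ\theta = \phi$.

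The heart of the matter is (3), and the first step I would carry out is the computation of the limit on a single basis element. Iterating Corollary \ref{cor:FormulaTheta}, the successive coefficients telescope and produce, for every $n$,
\[
\tT(A,\va)^{\theta^n}
= \frac{\card{\grsuba * (\va \mod A\abgr)}}{\card{\grsuba * (C_0^{-n}\va C_0^n \mod A\abgr)}}\,\tT(A,C_0^{-n}\va C_0^n).
\]
By the discussion preceding the definition of $\phi$ there is an $m$ with $C_0^{-m}\abgr C_0^m \subset A\abgr$, so for $n \ge m$ one has $C_0^{-n}\va C_0^n \in A\abgr$; under Assumption \ref{assumption:s} (which gives $A\abgr + \abgr A = A\abgr$) Proposition \ref{BasicTheory:DoubleCoset} then forces both $\tT(A,C_0^{-n}\va C_0^n) = \tT(A,\vzero)$ and $\card{\grsuba * (C_0^{-n}\va C_0^n \mod A\abgr)} = 1$. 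Passing to the limit I would obtain
\[
\tT(A,\va)^{\theta^\infty} = \card{\grsuba * (\va \mod A\abgr)}\,\tT(A,\vzero),
\]
and applying $s^{-1}$ gives $\phi(\tT(A,\va)) = \card{\grsuba * (\va \mod A\abgr)}\,T(A)$.

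To conclude I would use the decomposition $\tT(A) = \tT(A,\abgr) = \sum_{\mathfrak a}\tT(A,\mathfrak a)$ established above, where $\mathfrak a$ ranges over the $\grsuba$-orbits in $\abgr/A\abgr$. Applying $\phi$ term by term, each summand contributes (size of the orbit $\mathfrak a$)$\cdot T(A)$ by the formula just obtained, and summing the orbit sizes over a set of representatives recovers the full cardinality of $\abgr/A\abgr$. This yields $\tT(A)^\phi = \card{\abgr/A\abgr}\,T(A)$.

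The step I expect to be the main obstacle is the telescoping identity for $\tT(A,\va)^{\theta^n}$ and the verification that the sequence genuinely stabilizes at $\tT(A,\vzero)$ with all remaining coefficients equal to $1$; this is where Assumption \ref{assumption:s}, the finiteness of $\abgr/A\abgr$, and Proposition \ref{BasicTheory:DoubleCoset} must be combined carefully. Once the single-element limit formula is secured, the orbit-counting that assembles $\tT(A,\abgr)$ is routine.
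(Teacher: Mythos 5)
Your argument is correct, and on the one nontrivial assertion (3) it takes a genuinely different route from the paper's. The paper's proof is a one-step application of Proposition \ref{Thetheta:tT(A)theta}: choose $m$ with $C_0^{-m}\abgr C_0^m\subset A\abgr$, set $\eta=\theta_{C_0^m}$, observe $\tilde{T}(A)^{\theta^\infty}=\tilde{T}(A)^{\theta^m}=\tilde{T}(A)^{\eta}$, and apply that proposition with ${\mathcal N}=\abgr$, so that ${\mathcal N}'=C_0^{-m}\abgr C_0^m$ has trivial image modulo $A\abgr$ and $\tilde{T}(A)^{\eta}=\left|\abgr/A\abgr\right|\tilde{T}(A,\vzero)$ drops out at once. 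You never invoke Proposition \ref{Thetheta:tT(A)theta}; instead you telescope Corollary \ref{cor:FormulaTheta} on a single basis element to obtain the pointwise formula $\phi(\tilde{T}(A,\va))=\left|\grsuba\ast(\va \bmod A\abgr)\right| T(A)$, and then sum orbit sizes over $\grsuba\backslash(\abgr/A\abgr)$ --- the class equation --- to recover $\left|\abgr/A\abgr\right|$. The two arguments are close cousins, since the proof of Proposition \ref{Thetheta:tT(A)theta} is the same orbit bookkeeping carried out for a subgroup ${\mathcal N}$ all at once; but your version buys two things: it bypasses the identity $\theta_{C_0}^{m}=\theta_{C_0^{m}}$, which the paper uses silently and which rests on $\tilde{T}(C_0,\vzero)^m=\tilde{T}(C_0^m,\vzero)$ (Lemma \ref{BasicTheory:RightAction}), and it records the finer per-double-coset statement of which assertion 3 is the aggregate. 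One cosmetic misattribution: the equality $\left|\grsuba\ast(C_0^{-n}\va C_0^{n}\bmod A\abgr)\right|=1$ for $n\ge m$ does not literally follow from Proposition \ref{BasicTheory:DoubleCoset}; it is the observation that the zero class is a fixed point of the $\ast$-action (for $X=AYA^{-1}\in\grsuba$ one has $X\ast(A\vb)=A\bigl(Y\vb Y^{-1}\bigr)\in A\abgr$), which the paper uses implicitly throughout; with that reading your stabilization step is sound. Your treatments of (1) and (2), via the last sentence of Corollary \ref{cor:FormulaTheta} and shift-invariance of the limit of an eventually constant sequence, are exactly the routine verifications the paper dismisses as trivial.
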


\begin{proof}
The assertions 1-2 are trivial. 
We shall prove the last assertion. We see that there exists a positive integer $m$
such that $C_0^{-m} \abgr C_0^m \subset A\abgr$.
Put $\eta=\theta_{C_0^m}$, then 
$\tT(A)^{\theta^\infty}=\tT(A)^{\theta^m} = \tT(A)^{\eta}$.
By Proposition \ref{Thetheta:tT(A)theta}, we have 
$\tT(A)^{\eta} = \card{\abgr / A \abgr} \tT(A,\vzero)$, which completes the proof.
\end{proof}
\medskip


\subsection{Computation of the product $\tT(A,\vzero) \tT(B)$}
\label{section:CommutativeCase}
{

\newcommand{\eqna}[1]{
	\begin{eqnarray*}
		#1
	\end{eqnarray*}
}
\newcommand{\calY}{{\cal Y}}
\newcommand{\calX}{{\cal X}}
In this subsection, we compute the product $\tT(A,\vzero) \tT(B)$ for each
$A,B \in \mo$, in the case where $G$ has an anti-automorphism $(\alpha \mapsto \hat{\alpha})$ 
preserving $\gr A \gr$ for all $A \in \mo$.
Then it is well known that $R$ is commutative.
For each $A, B ,C \in \mo$, we put \[\calX (A,B,C) =\left\{ \alpha \in \gr A \gr  \ | \ \hat{\alpha}^{-1}C \in  \gr B \gr \right\}.\]
Then, for each system of representatives $\{\alpha_i\}_i$ of $\gr \bs \gr A \gr $ in $\gr A \gr$, $\{\hat{\alpha_i}\}_i$ is a system of representatives of $\gr A \gr / \gr $ in $\gr A \gr$.
Hence we see that, for each $A,B \in \mo$,
 \[T(A)T(B) = \sum_{\gr C \gr \in \gr \bs \mo / \gr} \card{ \gr \bs \calX(A,B,C)}T(C).\]
Similarly, we have the following formula:

\begin{lemma}\label{BasicTheory:lemma:tTAzerotTB}
For $A,B,C \in \mo$, and $\vc \in \abgr$, we put \[\calY(A,B,C,\vc) = \set{\alpha \in \calX(A,B,C) \ | \  \hat{\alpha} ^{-1}\vc \in \abgr}.\]
Then, for $A, B \in \mo,$ we have
\eqna{
	\tT(A,\vzero) \tT(B) &=& 
	\sum_{(C,\vc)}
	\card{ \gr \bs \calY(A,B,C,\vc)}
	\tT(C,\vc),
}
where $(C,\vc)$ runs through a system of representatives of 
$\tgr \bs \tmo / \tgr$ in $\tmo$.
\end{lemma}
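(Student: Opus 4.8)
The plan is to mirror the derivation of the commutative formula $T(A)T(B)=\sum_C|\gr\bs\calX(A,B,C)|T(C)$ recalled just above, but to carry the second ($\abgr$-)component along throughout. First I would expand $\tT(A,\vzero)\tT(B)=\sum_{\vb}\tT(A,\vzero)\tT(B,\vb)$, where $\vb$ runs over a system of representatives of $\tgr\bs\tgr(\{B\}\times\abgr)\tgr/\tgr$, and read off the coefficient of a fixed double-coset representative $\tT(C,\vc)$ from the second expression for the structure constants: it equals
\[\sum_{\vb}\left|\left\{(\alpha',\va')\tgr\in\tgr(A,\vzero)\tgr/\tgr \ \middle|\ (\alpha',\va')^{-1}(C,\vc)\in\tgr(B,\vb)\tgr\right\}\right|.\]
Since the double cosets $\tgr(B,\vb)\tgr$ are disjoint and partition $\tgr(\{B\}\times\abgr)\tgr$, summing over $\vb$ collapses the inner condition to a single membership $(\alpha',\va')^{-1}(C,\vc)\in\tgr(\{B\}\times\abgr)\tgr$.

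The decisive step is to pin down a convenient system of representatives of the right cosets $\tgr(A,\vzero)\tgr/\tgr$. Here I would exploit the left/right asymmetry forced by Assumption \ref{assumption:s}. Absorbing the inner $\tgr$-factor gives $(X,\vx)(A,\vzero)(Y,\vy)\tgr=(XA,\vx A)\tgr$, and a direct check shows $(XA,\vx A)\tgr=(XA,\vzero)\tgr$: indeed $(XA,\vzero)^{-1}(XA,\vx A)=(E,(XA)^{-1}\vx A)$, and $(XA)^{-1}\vx A\in\abgr$ because $(X^{-1}\vx)A\in\abgr A\subset A\abgr$. Thus the second component may be taken to be $\vzero$, so the right cosets biject with $\gr A\gr/\gr$ through the first component (this is the analogue, for right cosets, of the bijection in Lemma \ref{mult1to12}). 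Invoking the anti-automorphism exactly as in the commutative case — if $\{\alpha_i\}$ represents $\gr\bs\gr A\gr$ then $\{\hat\alpha_i\}$ represents $\gr A\gr/\gr$ — I may therefore take $\{(\hat\alpha_i,\vzero)\}_i$ as representatives of $\tgr(A,\vzero)\tgr/\tgr$. I expect this to be the main obstacle: it is precisely the point at which $A\abgr\supset\abgr A$ (rather than equality) is used, and it is what reduces the noncommutative computation to a bookkeeping on first components.

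With these representatives, $(\hat\alpha_i,\vzero)^{-1}(C,\vc)=(\hat\alpha_i^{-1}C,\ \hat\alpha_i^{-1}\vc)$, so the coefficient of $\tT(C,\vc)$ becomes $\left|\left\{i \ \middle|\ (\hat\alpha_i^{-1}C,\ \hat\alpha_i^{-1}\vc)\in\tgr(\{B\}\times\abgr)\tgr\right\}\right|$. It then remains to show that this membership is equivalent to the two conditions $\hat\alpha_i^{-1}C\in\gr B\gr$ and $\hat\alpha_i^{-1}\vc\in\abgr$. The forward direction is immediate. For the converse I would use that $\gr$ preserves $\abgr$ on both sides (since $\gr\subset\mo$ is a group and $\abgr$ is a two-sided $\mo$-module, whence $\gr\abgr=\abgr=\abgr\gr$): writing $\hat\alpha_i^{-1}C=X_0BY_0$ with $X_0,Y_0\in\gr$, the map $\vb_0\mapsto X_0\vb_0 Y_0$ is onto $\abgr$, so some $\vb_0$ gives $(X_0,\vzero)(B,\vb_0)(Y_0,\vzero)=(\hat\alpha_i^{-1}C,\hat\alpha_i^{-1}\vc)$, placing the element in $\tgr(B,\vb_0)\tgr$.

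Finally, the first condition says exactly that $\alpha_i\in\calX(A,B,C)$, and together with $\hat\alpha_i^{-1}\vc\in\abgr$ it says $\alpha_i\in\calY(A,B,C,\vc)$; since $\calY(A,B,C,\vc)$ is a left-$\gr$-invariant subset of $\gr A\gr$ and $\{\alpha_i\}$ exhausts $\gr\bs\gr A\gr$, the count equals $|\gr\bs\calY(A,B,C,\vc)|$, which is the claimed coefficient. As a consistency check, specializing to $\vc=\vzero$ gives $\calY(A,B,C,\vzero)=\calX(A,B,C)$, and discarding the second component recovers the commutative formula $T(A)T(B)=\sum_C|\gr\bs\calX(A,B,C)|T(C)$.
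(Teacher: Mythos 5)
Your proposal is correct and follows essentially the same route as the paper: the paper's proof likewise takes a system of representatives $\{\alpha_i\}_i$ of $\gr \bs \gr A \gr$, uses Assumption \ref{assumption:s} to see that $\{(\hat{\alpha_i},\vzero)\}_i$ represents $\tgr(A,\vzero)\tgr/\tgr$, and reads off the coefficient of $\tT(C,\vc)$ as $\card{\{i \mid \hat{\alpha_i}^{-1}C\in\gr B\gr,\ \hat{\alpha_i}^{-1}\vc\in\abgr\}} = \card{\gr\bs\calY(A,B,C,\vc)}$. The only difference is that you spell out the two steps the paper leaves implicit (the verification that the second components can be normalized to $\vzero$ via $\abgr A\subset A\abgr$, and the equivalence of membership in $\tgr(\{B\}\times\abgr)\tgr$ with the two displayed conditions), and both verifications are sound.
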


\begin{proof}
Let $\{\alpha_i\}_i$ be a system of representatives of $\gr \bs \gr A \gr $ in $\gr A \gr$.
Since $A\abgr \supset \abgr A$, the set $\{(\hat{\alpha_i}, \vzero)\}$ is a system of representatives of 
$\tgr (A,\vzero) \tgr / \tgr$ in $\tgr (A,\vzero) \tgr$.
Hence, we have
\eqna{\tT(A,\vzero) \tT(B) 
&=& 
\sum_{(C,\vc)}\card{\left\{ i  \ | \ {(\hat{\alpha_i}}^{-1},\vzero) (C,\vc) \in \cup_{\vb \in \abgr} \tgr (B,\vb) \tgr \right\}}
\tT(C,\vc)
\\
&=& 
\sum_{(C,\vc)}\card{\left\{ i  \ | \ {\hat{\alpha_i}}^{-1} C \in \gr B \gr, \hat{\alpha_i}^{-1} \vc \in \abgr \right\}}
\tT(C,\vc)
\\
&=&
\sum_{(C,\vc)}
\card{ \gr \bs \calY(A,B,C,\vc)}
\tT(C,\vc).
}
\end{proof}
}
%
%

}
\section{Local Hecke rings and local Hecke series associated with algebras}\label{section:RingsOfHeckeAlgebras}
{
\newcommand{\TL}[1]{T_L\left( #1\right)}

In this section, we recall the definition of the local Hecke rings and the local Hecke series defined in \cite{Hy2} and \cite{Hy4}, respectively.
Let $L$ be an algebra which is free of rank $r$ as an abelian group, and fix a $\Z$-basis of $L$. Then $\Aut_{\Q_p}^{alg}(L \otimes \Q_p)$ and $ \End_{\Z_p}^{alg}(L \otimes \Z_p)$ are identified with subsets of $M_r(\Q_p)$. The following notation were introduced.
\[
	\begin{array}{llllll}
		\glp &=& \Aut_{\Q_p}^{alg}(L \otimes \Q_p), \\ \rule{0ex}{3ex}
		 \dlp &=& \End_{\Z_p}^{alg}(L \otimes \Z_p) \cap \glp, \\ \rule{0ex}{3ex}
		\gamlp  &=& \Aut_{\Z_p}^{alg}(L \otimes \Z_p).
	\end{array}
\]
The local Hecke ring $\rlp$ associated with $L$ is defined by the Hecke ring with respect to the pair $(\gamlp,\dlp)$.

Next, the local Hecke series $\plp(X)$ defined in \cite{Hy4} are recalled by using notation of Section \ref{section:BasicTheory}.
Set $\tlp = T_{\gamlp,\dlp}$. 
For each nonnegative integer $k$, define an element $\tlp(p^k)$ of $\rlp$ by
 \[\sum_\alpha \tlp(\alpha),\] where $\alpha$ runs through a system of representatives of $\gamlp \bs \dlp / \gamlp$ and satisfies $\left| \lp/\lp^{\alpha} \right| = p^k$.
The local Hecke series $\plp(X)$ associated with $L$ is defined to be the generating function of 
$\{\tlp(p^k)\}_k$, that is, 
\[\plp(X)  = \sum_{k \geq 0}\tlp(p^k)X^k \in \rlp[[X]].\] 

\begin{example}
\ 
\begin{enumerate}
\item If $L$ is the ring $\Z^r$ of the direct sum of $r$-copies of $\Z$ for some positive integer $r$, then $\gamlp = GL_r(\Z_p)$ and $\dlp = M_r(\Z_p) \cap GL_r(\Q_p)$.
The ring structure of $\rlp$ and the rationality of $\plp(X)$ were shown by \cite[Hecke]{He} and \cite[Tamagawa]{T}.
\item If $L$ is the Heisenberg Lie algebra of dimension $3$, $\rlp$ and $\plp(X)$ are treated in our previous paper \cite{Hy}. 
$\plp(X)$ is slightly different from $D_{2,2}(X)$ defined in \cite{Hy}. 
For the details, see Remark \ref{remark:Dnini(X)}.
\end{enumerate}
\end{example}
}

\section{The local Hecke rings associated with the Heisenberg Lie algebras}
\label{section:Heisenberg Lie algebras}
{
\newcommand{\tT}{\tilde{T}}

\newcommand{\TL}[1]{T_L\left( #1\right)}
\newcommand{\pmat}[1]
{
		\begin{pmatrix}
			#1
		\end{pmatrix}
}
\newcommand{\mat}[1]
{
		\begin{matrix}
			#1
		\end{matrix}
}

\newcommand{\eqna}[1]{
	\begin{eqnarray*}
		#1
	\end{eqnarray*}
}

\newcommand{\set}[1]{\left\{ #1 \right\}}
\newcommand{\card}[1]{\left| #1 \right|}

\newcommand{\ep}
{{\langle p \rangle}}
\newcommand{\epp}
{{\langle p^2 \rangle}}
Let $n$ be a positive integer.
The Heisenberg Lie algebra of dimension $2n + 1$ over $\Z$, denoted by $\Hn$, 
is the Lie algebra of square matrices of size $n+2$ with entries in $\Z$ of the form
\[
\begin{pmatrix}
	0 & \va & c\\
		0 &O_n & \vb \\ 
	0  & 0  &   0\\
\end{pmatrix},
\]
where
$\va$ is a row vector of length $n$, $\vb$ is a column vector of length $n$,
and $O_n$ is the zero matrix of size $n$.
We study the local Hecke ring and the local Hecke series associated with $\Hn$.
Put $L = \Hn$.
By choosing the standard basis of $L$, one can identify $\glp$ with the group of matrices of the form
\[
\pmat{
	A & \va \\
	\vzero_{2n} & \mu (A)
},
\]
where $A \in GSp_{2n}(\Q_p)$, $\mu (A)$ is the multiplier of $A$,  
$\va$ is a column vector of length $2n$,
and $\vzero_{2n}$ is the zero row vector of length $2n$.
It is easy to see that $\dlp = \glp \cap M_{2n + 1}(\Z_p)$ and $\gamlp = \glp \cap GL_{2n + 1}(\Z_p).$  
\smallskip

Put $G = GSp_{2n}(\Q_p)$, $\mo = G \cap M_{2n}(\Z_p)$, 
$\gr = GSp_{2n}(\Z_p)$, 
and let $\abgr$ be the set of column vectors of length $2n$ with entries in $\Z_p$. 
Then $\mo$ acts on $\abgr$ on the left naturally, and does on the right as follows:
$\va A = \mu(A) \va$ for each $\va \in \abgr$, $A \in \mo$.
Hence  $\abgr$ is a both sides $\mo$-module. Note that $X*\va = \mu(X)^{-1}X\va$ for each $A \in \mo$, $X \in \gr^A$, $\va \in \abgr$.
Let us put $\tmo = \tmo({\mo,\abgr}),~\tgr = \tgr({\gr,\abgr})$. 
It is well known that $(\gr,\mo)$ is double finite (cf.\cite{S2}). 
Thus, Corollary \ref{cor:DoubleFiniteness} implies that $(\tgr,\tmo)$ is also double finite.
We put $R = R(\gr,\mo)$, $T = T_{\gr,\mo}$, $\tR = R(\tgr,\tmo)$ and $\tT = T_{\tgr,\tmo}$.
Then it is easy to see that $\dlp$ is isomorphic to $\tmo$ 
by the map
\[
\pmat{
	A & \va \\
	\vzero_n & \mu (A)
}
\mapsto
(A, \va),
\]
and that the isomorphism derives the isomorphism from $\rlp$ onto $\tR$.
Let us indentify $\rlp$ with $\tR$. Then
\[\tlp\left(\pmat{
	A & \va \\
	\vzero_n & \mu (A)
}
\right)
=
\tT(A,\va)
.
\] 

Let $k$ be a nonnegative integer.
We denote by $T(p^k)$ the sum of all the elements of the form $T(\gr A \gr)$ with $\gr A \gr \in \gr \bs \mo / \gr$ and $v_p(\mu(A)) = k$, and by $\tT(p^k)$ the sum of all the elements of the form $\tT(\tgr (A,\va) \tgr)$ 
with $\tgr (A,\va) \tgr \in \tgr \bs \tmo / \tgr$ and $v_p(\mu(A)) = k$.
The Hecke series $P_n(X)$ associated with $GSp_{2n}$ is defined by 
\[P_n(X) = \sum_{k \geq 0}T(p^k)X^k.\]
In addition, let us define the formal power series $\tP_n(X)$ with coefficients in $\tR$ by 
\[\tP_n(X) = \sum_{k \geq 0}\tT(p^k)X^k.\]

Note that $\tT(p^k) = \tlp(p^{(n+1)k})$ for all nonnegative integer $k$, and
$\tP_n(X^{n+1}) = \plp(X)$.

\begin{remark}\label{remark:Dnini(X)}
$D_{2,2}(X)$ in our previous paper \cite{Hy} coincides with $\tP_1(X)$. Thus $\pHop(X)=D_{2,2}(X^2)$.  
\end{remark}
\medskip

{
\newcommand{\ecent}{C_0}

Put $\ecent = pE$. Then, the tuple $(G,\mo, \gr, \abgr, C_0)$
satisfies Assumptions \ref{assumption:deg}, \ref{assumption:s} and \ref{assumption:phi}. Hence, the ring homomorphisms, 
\[s=s_n:R \to \tR, \  \theta=\theta_n:\tR \to \tR, \ \text{and}\  \phi=\phi_n:\tR \to R.\]
are constructed.
Set $\ep=T(pE)$, $\epp = T(p^2E)$.
Note that for each $(A,\va) \in \tmo$, we have
\[\ep^s \tT{(A,\va)} = \tT{(pA,p\va)},\] and
\[\tilde{T}( A ,\va )^{\theta} =
\frac{
\left|  \gr^A* (\va   \mod A  \abgr )\right|
}
{
\left|  \gr^A* 
	(p\va \mod A   \abgr )
\right|
}
\tT(A,p \va).
\] 
}
By Proposition \ref{prop:propertymorphisms}, we have the following properties:

\begin{proposition}\label{prop:Relation:Heisenberg}
The three ring homomorphisms $\phi$, $s$ and $\theta$ satisfy the following properties:
\begin{enumerate}
\item
$\phi \circ s = id_R$. 
\item
$\phi \circ \theta = \phi$.
\item
$\tT(A)^\phi = p^{n \cdot v_p(\mu(A))} T(A)$.
\end{enumerate}
\end{proposition}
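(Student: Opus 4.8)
The plan is to deduce Proposition \ref{prop:Relation:Heisenberg} directly from the abstract Proposition \ref{prop:propertymorphisms}, which has already been established for a general tuple $(G,\mo,\gr,\abgr,C_0)$ satisfying Assumptions \ref{assumption:deg}, \ref{assumption:s} and \ref{assumption:phi}. The text immediately preceding the statement asserts that the specific tuple $(G,\mo,\gr,\abgr,C_0)$ with $G=GSp_{2n}(\Qp)$, $\mo=G\cap M_{2n}(\Zp)$, $\gr=GSp_{2n}(\Zp)$, $\abgr$ the column vectors of length $2n$ over $\Zp$, and $C_0=pE$ satisfies all three assumptions. So the first two items, $\phi\circ s=id_R$ and $\phi\circ\theta=\phi$, are literally items 1 and 2 of Proposition \ref{prop:propertymorphisms} transported along the identification $\rlp\cong\tR$; I would simply invoke that proposition and state that these two equalities require no further argument.

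The only genuine content is item 3, which specializes the abstract formula $\tT(A)^\phi=\card{\abgr/A\abgr}\,T(A)$ to the present situation. Thus the main step is the index computation
\[
\card{\abgr/A\abgr}=p^{n\cdot v_p(\mu(A))}.
\]
First I would record that $\abgr=\Zp^{2n}$ and that $A$ acts on $\abgr$ by left matrix multiplication, so that $A\abgr=A\Zp^{2n}$ and $\card{\abgr/A\abgr}=\card{\Zp^{2n}/A\Zp^{2n}}=\card{\det A}_p^{-1}=p^{v_p(\det A)}$, the standard elementary-divisor computation of the index of a sublattice. It then remains to relate $v_p(\det A)$ to $v_p(\mu(A))$ for $A\in\mo\subset GSp_{2n}(\Qp)$.

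The key algebraic fact is that any $A\in GSp_{2n}$ with multiplier $\mu(A)$ satisfies $\det A=\mu(A)^{n}$. This follows from the symplectic relation ${}^tA J A=\mu(A)J$, where $J$ is the standard symplectic form of size $2n$: taking determinants gives $(\det A)^2\det J=\mu(A)^{2n}\det J$, hence $(\det A)^2=\mu(A)^{2n}$, and since $A$ preserves orientation (the similitude group lies in the identity component, so $\det A>0$ in the appropriate sense, or one fixes the sign via continuity from the identity) one gets $\det A=\mu(A)^{n}$. Consequently $v_p(\det A)=n\cdot v_p(\mu(A))$, which combined with the lattice-index computation yields $\card{\abgr/A\abgr}=p^{n\cdot v_p(\mu(A))}$ and hence $\tT(A)^\phi=p^{n\cdot v_p(\mu(A))}T(A)$ by item 3 of Proposition \ref{prop:propertymorphisms}.

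The main obstacle, such as it is, lies in pinning down the sign in $\det A=\pm\mu(A)^n$ and confirming it is $+$; this is where a careful reader might expect friction, so I would handle it by reducing to the identity component or citing the standard structure of $GSp_{2n}$. Everything else is a direct citation of the already-proved abstract Proposition \ref{prop:propertymorphisms} together with the routine fact that the index of $A\Zp^{2n}$ in $\Zp^{2n}$ is $p^{v_p(\det A)}$. I do not anticipate any difficulty beyond these two standard points.
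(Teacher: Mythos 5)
Your proposal follows the paper's proof almost exactly: items 1 and 2 are read off from Proposition \ref{prop:propertymorphisms}, and item 3 reduces to the lattice-index computation $\left|\abgr/A\abgr\right| = p^{v_p(\det A)}$ together with a relation between $\det A$ and $\mu(A)$. The one place you diverge is also the one place your argument breaks down: to fix the sign in $\det A = \pm\mu(A)^{n}$ you appeal to orientation, positivity of $\det A$, and continuity from the identity, none of which make sense over $\Qp$ --- the field $\Qp$ carries no order, and $GSp_{2n}(\Qp)$ is totally disconnected as a topological group, so ``the identity component'' in the topological sense is trivial and cannot fix a sign. (The identity $\det A = \mu(A)^{n}$ is in fact true over any field, but a correct proof goes through the Pfaffian applied to ${}^{t}AJA = \mu(A)J$, or through Zariski-connectedness of the algebraic group $GSp_{2n}$, not through a topological or order-theoretic argument.)

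The paper sidesteps this entirely, and more cheaply: its proof cites only the squared relation $(\det A)^2 = \mu(A)^{2n}$, and since the statement involves only $p$-adic valuations, taking $v_p$ gives $2\,v_p(\det A) = 2n\,v_p(\mu(A))$, hence $v_p(\det A) = n\cdot v_p(\mu(A))$, with no sign question ever arising. So your detour is unnecessary rather than fatal --- the conclusion you need survives --- but the sign-fixing step as you wrote it would not withstand scrutiny and should be replaced either by the paper's valuation trick or by one of the correct proofs of $\det = \mu^{n}$ indicated above.
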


\begin{proof}
We need only prove the last identity, 
which is an immediate consequence of the facts $\card{\abgr / A \abgr} = p^{v_p(\det A)}$ and $(\det A)^2 = \mu(A)^{2n}$.
\end{proof}

\begin{corollary}
$\tT(p^{k})^{\phi} = p^{nk}T(p^k)$ for each nonnegative integer $k$.
\end{corollary}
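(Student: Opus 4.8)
The plan is to deduce this from part~(3) of Proposition~\ref{prop:Relation:Heisenberg}, namely $\tT(A)^{\phi} = p^{n\cdot v_p(\mu(A))}T(A)$, by first resolving the coefficient $\tT(p^k)$ into the building blocks $\tT(A) = \tT(A,\abgr)$. Since $\phi$ is a ring homomorphism, and in particular additive, it suffices to establish the decomposition
\[
\tT(p^k) = \sum_{\gr A\gr} \tT(A),
\]
where $\gr A\gr$ runs over those double cosets in $\gr \bs \mo / \gr$ with $v_p(\mu(A)) = k$, and then to apply $\phi$ term by term.

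To obtain this decomposition I would use the projection $\psi\colon \tmo \to \mo$, $(C,\vc)\mapsto C$, which is a monoid homomorphism carrying $\tgr$ onto $\gr$ and hence descends to a well-defined map $\tgr \bs \tmo / \tgr \to \gr \bs \mo / \gr$ on double cosets. The key point is that the fiber of this map over $\gr A\gr$ is exactly the index set of $\tT(A,\abgr)$. A direct computation with the product gives
\[
(X,\vzero)(A,\va'')(Y,\vzero) = (XAY,\ \mu(Y)X\va''),
\]
and for $A' = XAY \in \gr A\gr$ with $X,Y \in \gr$ the map $\va'' \mapsto \mu(Y)X\va''$ is a bijection of $\abgr$ onto itself, since $X$ is invertible over $\Zp$ and $\mu(Y) \in \Zp^\times$. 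Hence $\tgr(\{A\}\times\abgr)\tgr$ meets precisely the double cosets lying over $\gr A\gr$, so $\tT(A)$ equals the sum of $\tT(A',\va')$ over all double cosets $\tgr(A',\va')\tgr$ lying over $\gr A\gr$.

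The decomposition of $\tT(p^k)$ then follows because $v_p\circ\mu$ is constant on each fiber of $\psi$: for $A' \in \gr A\gr$ one has $\mu(A') = \mu(X)\mu(A)\mu(Y)$ with $\mu(X),\mu(Y) \in \Zp^\times$, whence $v_p(\mu(A')) = v_p(\mu(A))$. Thus the double cosets of $\tmo$ with $v_p(\mu) = k$ are partitioned by the fibers over those $\gr A\gr$ with $v_p(\mu(A)) = k$, and summing $\tT(A)$ over these fibers reproduces $\tT(p^k)$. Applying $\phi$ and invoking Proposition~\ref{prop:Relation:Heisenberg}(3) yields
\[
\tT(p^k)^{\phi} = \sum_{\gr A\gr} \tT(A)^{\phi} = \sum_{\gr A\gr} p^{n\cdot v_p(\mu(A))}T(A) = p^{nk}\sum_{\gr A\gr} T(A) = p^{nk}T(p^k).
\]

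I expect the only point requiring genuine care — the main, and rather minor, obstacle — to be the bookkeeping in the fiber identification: checking that $\tT(A,\abgr)$ collects exactly the double cosets lying over $\gr A\gr$ and no others. This rests on the surjectivity of $\va'' \mapsto \mu(Y)X\va''$ on $\abgr$ together with the well-definedness of $\psi$ on double cosets. Everything else is formal, using only the additivity of $\phi$ and the constancy of the $p$-valuation of the multiplier along $\gr$-double cosets.
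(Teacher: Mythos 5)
Your proof is correct and follows essentially the same route as the paper: the paper disposes of this corollary in one line, as a straightforward consequence of Property 3 of Proposition \ref{prop:Relation:Heisenberg}, and your argument simply makes the implicit bookkeeping explicit. In particular, your decomposition $\tilde{T}(p^k)=\sum_{\gr A\gr}\tilde{T}(A,\abgr)$ over the double cosets with $v_p(\mu(A))=k$, justified via the fibers of the projection $(C,\vc)\mapsto C$ and the constancy of $v_p\circ\mu$ on $\gr$-double cosets, is exactly the verification the paper leaves unstated before applying the additive map $\phi$ termwise.
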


\begin{corollary}\label{corollary:phiDtD}
$\tP_n^{\phi}(X) = P_n(p^{n}X)$.
\end{corollary}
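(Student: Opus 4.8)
The plan is to evaluate $\tP_n^\phi(X)$ coefficient by coefficient and then to recognize the resulting power series. Since $\phi : \tR \to R$ is a ring homomorphism, it induces a map on formal power series by acting on each coefficient separately; in particular
\[
\tP_n^\phi(X) = \sum_{k \geq 0} \tT(p^k)^\phi\, X^k .
\]
The only point deserving a word of justification here is that $\tP_n^\phi(X)$ means, by definition, the series obtained by applying $\phi$ to the coefficients $\tT(p^k)$ individually, which is legitimate because $\phi$ is additive.

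Next I would substitute the value of each coefficient supplied by the preceding corollary, namely $\tT(p^k)^\phi = p^{nk} T(p^k)$ for every nonnegative integer $k$. This yields
\[
\tP_n^\phi(X) = \sum_{k \geq 0} p^{nk} T(p^k)\, X^k = \sum_{k \geq 0} T(p^k)\, (p^n X)^k .
\]
Finally I would compare the right-hand side with the definition $P_n(X) = \sum_{k \geq 0} T(p^k) X^k$, which shows that the last series is precisely $P_n$ evaluated at $p^n X$. Hence $\tP_n^\phi(X) = P_n(p^n X)$, as asserted.

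There is no genuine obstacle in this argument: all the substance resides in the preceding corollary. That corollary is in turn obtained from Proposition \ref{prop:Relation:Heisenberg}(3) by applying $\phi$ to the decomposition of $\tT(p^k)$ into a sum of terms $\tT(A)$ over the double cosets $\gr A \gr$ with $v_p(\mu(A)) = k$, and then factoring out the common power $p^{nk}$ coming from the uniform value of $v_p(\mu(A))$. The present corollary is simply the packaging of that scalar identity into generating-function form, so once the coefficientwise statement is in hand the passage to power series is immediate.
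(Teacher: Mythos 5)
Your proposal is correct and matches the paper's proof, which likewise obtains the corollary as an immediate consequence of Proposition \ref{prop:Relation:Heisenberg}(3) via the coefficientwise identity $\tilde{T}(p^k)^{\phi} = p^{nk}\,T(p^k)$ and then reads off $\sum_k p^{nk}T(p^k)X^k = P_n(p^nX)$. The paper merely compresses this into ``straightforward consequences of Property $3$,'' so your write-up is the same argument spelled out in full.
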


\begin{proof}
	They are straightforward consequences of Property $3$.
\end{proof}
\bigskip

Next we show the non-commutativety of the coefficients of $\tP_n(X)$. 
For every positive integer $m$, we denote by $\tmo_{p^m}$, the set of elements $(A,\va)$ of 
$\tmo$ such that $v_p(\mu(A))=m$.
To prove this, we need the following lemma.

\begin{lemma}
For positive integers $k$, $l$ with $k < l$. Let $\vc_2$ be an element of $\Zp^n - p\Zp^n$, and put 
\[C = \pmat{p^kE & O\\O & p^lE}\in \tmo, \ \vc = \pmat{\vzero \\ \vc_2}\in \Zp^{2n},\]
where $E$ and $O$ are respectively the identity matrix and the zero matrix of size $n$.
Then we have:
\[\tmo_{p^k} \cdot \tmo_{p^l} \ni (C,\vc),\ \tmo_{p^l} \cdot \tmo_{p^k} \not \ni (C,\vc).\]
\end{lemma}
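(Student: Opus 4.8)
The plan is to prove the two assertions separately, since they are logically independent: the first is an existence (membership) statement, and the second is a non-membership statement that requires ruling out all possible factorizations.

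For the membership $\tmo_{p^k} \cdot \tmo_{p^l} \ni (C,\vc)$, I would exhibit an explicit factorization. The monoid product in $\tmo$ is $(A,\va)\cdot(B,\vb) = (AB, A\vb + \va B)$, where $\va B = \mu(B)\va$. Writing $C = p^kE \cdot \diag(E, p^{l-k}E)$ splits the diagonal matrix so that the first factor has $v_p(\mu) = k$ and the second has $v_p(\mu) = l-k > 0$ (using $k < l$), hence lies in $\mo$. I would then solve for vectors $\va, \vb$ with $(A,\va)\cdot(B,\vb) = (C,\vc)$: taking $A = p^kE$ forces $\mu(A) = p^k$ and the vector equation $A\vb + \va B = p^k\vb + \mu(B)\va = \vc$. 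A convenient choice is $\va = \vzero$ and $\vb = p^{-k}\vc$; I must check $\vb \in \Zp^{2n}$, which holds because $\vc = \pmat{\vzero \\ \vc_2}$ has its nonzero block weighted against $\diag(E,p^{l-k}E)$ — more carefully, I would instead place $\vc_2$ in the second factor so that integrality is automatic, choosing the split to keep both factors in $\tmo$. Verifying $v_p(\mu) = k$ and $= l$ on the two factors then completes this direction.

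The non-membership $\tmo_{p^l} \cdot \tmo_{p^k} \not\ni (C,\vc)$ is the main obstacle, because I must show no factorization $(A,\va)\cdot(B,\vb) = (C,\vc)$ exists with $v_p(\mu(A)) = l$ and $v_p(\mu(B)) = k$. The constraint $AB = C = \diag(p^kE, p^lE)$ together with $v_p(\mu(A)) = l > k = v_p(\mu(B))$ is the heart of the matter: since $A, B \in \mo = GSp_{2n}(\Qp)\cap M_{2n}(\Zp)$ and $A$ has the larger multiplier valuation, the entries of $A$ are ``more divisible by $p$'' in a sense controlled by the symplectic structure. The vector equation reads $A\vb + \mu(B)\va = \vc$, i.e. $A\vb + p^k(\text{unit})\va \equiv \vc$. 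I would analyze this modulo appropriate powers of $p$: the term $\mu(B)\va$ contributes with valuation at least $v_p(\mu(B)) = k$, and I must show that $A\vb$ cannot supply the ``unit part'' $\vc_2 \in \Zp^n - p\Zp^n$ sitting in the second block. The key will be to show that because $v_p(\mu(A)) = l$ is large, every entry of $A$ relevant to producing the second block of $\vc$ is divisible by $p$, forcing $A\vb \equiv \vzero$ in the relevant block modulo $p$, so that $\vc \equiv \mu(B)\va \pmod p$ must be divisible by $p$ in that block — contradicting $\vc_2 \notin p\Zp^n$.

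To make the divisibility argument precise, I expect to use the Smith/elementary-divisor structure of symplectic matrices over $\Zp$: writing $A = \gamma_1 D_A \gamma_2$ with $\gamma_i \in \gr$ and $D_A$ diagonal with symplectically paired elementary divisors summing in valuation to $v_p(\mu(A)) = l$, and similarly for $B$, I would reduce the constraint $AB = C$ to a statement about these elementary divisors and read off the divisibility of the relevant rows of $A$. The technical heart is tracking how the symplectic pairing couples the elementary divisors of $A$ to the block $\diag(E, p^{l-k}E)$ structure of $C$, and confirming that the $v_p(\mu(A)) = l$ condition forces the obstruction in the second block. This elementary-divisor bookkeeping — rather than any single clever identity — is where the real work lies, and I would organize it by first reducing modulo $p$, isolating the second block of the vector equation, and deriving the contradiction with $\vc_2 \notin p\Zp^n$ there.
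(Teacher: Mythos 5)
There are genuine problems in both halves. In the membership half, the factorization you actually write down is wrong: you split $C = p^kE\cdot\diag(E,p^{l-k}E)$ and claim the first factor has $v_p(\mu)=k$, but the multiplier of a scalar matrix $\lambda E$ in $GSp_{2n}$ is $\lambda^2$, so $\mu(p^kE)=p^{2k}$ and your factors lie in $\tmo_{p^{2k}}$ and $\tmo_{p^{l-k}}$, which proves nothing about $\tmo_{p^k}\cdot\tmo_{p^l}$; the corrected split you allude to (``place $\vc_2$ in the second factor'') is never exhibited. The factorization that works, and is the paper's, is
\[
\left(\begin{pmatrix}p^kE & O\\ O & E\end{pmatrix},\ \vzero\right)\cdot\left(\begin{pmatrix}E & O\\ O & p^lE\end{pmatrix},\ \begin{pmatrix}\vzero\\ \vc_2\end{pmatrix}\right)=(C,\vc),
\]
whose factors have multipliers $p^k$ and $p^l$ respectively (for $\diag(D,D')$ the multiplier condition pairs the two diagonal blocks), and whose second component is $\diag(p^kE,E)\left(\begin{smallmatrix}\vzero\\ \vc_2\end{smallmatrix}\right)=\vc$ with no integrality issue.

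In the non-membership half, the decisive divisibility step is asserted rather than proved, and the mechanism you propose cannot deliver it: neither $v_p(\mu(A))=l$ alone nor the symplectic elementary divisors of $A$ control which rows of $A$ are divisible by $p$. For instance $\diag(p^lE,E)$ has multiplier valuation $l$ but unit entries in its lower rows, and in a decomposition $A=\gamma_1 D_A\gamma_2$ the factors $\gamma_i\in\gr$ mix rows arbitrarily, so ``reading off the divisibility of the relevant rows of $A$'' from $D_A$ fails. The correct leverage comes from $B$, not from $A$: since $B\in\mo$ with $v_p(\mu(B))=k$, the matrix $\mu(B)B^{-1}$ is integral, hence $p^kB^{-1}\in M_{2n}(\Zp)$, and therefore $A\Zp^{2n}=CB^{-1}\Zp^{2n}\subset p^{-k}C\Zp^{2n}$, whose second block is $p^{l-k}\Zp^n\subset p\Zp^n$. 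The second component of any candidate product is $A\vb+\va B=A\vb+\mu(B)\va\in A\Zp^{2n}+p^k\Zp^{2n}$, whose second block then lies in $p\Zp^n$ (using $l-k\geq 1$ and $k\geq 1$), contradicting $\vc_2\notin p\Zp^n$. This one-line consequence of $AB=C$ together with the integrality of $\mu(B)B^{-1}$ is exactly the paper's argument; it is the missing idea your mod-$p$ reduction needs, and without it your sketch does not close.
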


\begin{proof}
Since
\[\left(\pmat{p^kE & O \\ O & E}, \vzero \right) \in \tmo_{p^k},\ 
\left(\pmat{E & O \\ O & p^lE}, \pmat{\vzero \\ \vc_2}\right) \in \tmo_{p^l},\]
we have $\tmo_{p^k} \cdot \tmo_{p^l} \ni (C,\vc).$

Let $(A,\va)$ and  $(B,\vb)$ be  elements of $\tmo_{p^l}$ and $\tmo_{p^k}$, respectively.
And assume $AB = C$. The second component of  $(A,\va)(B,\vb)$ is contained in
$A\vb + p^k\Zp^{2n}$. 
Since $A = CB^{-1}$ and $p^kB^{-1} \in \tmo$, we have $A\Zp^{2n} \subset \pmat{\Zp^{n} \\ p^{l-k}\Zp^n}$.
Hence $A\vb + p^k\Zp^{2n}$ is contained in $\pmat{\Zp^{n} \\ p\Zp^n}$. 
Therefore, $\tmo_{p^l} \cdot \tmo_{p^k} \not \ni (C,\vc).$
\end{proof}
\medskip

Now we prove the non-commutativity.

\begin{theorem}\label{thm:noncommutativity}
Notations being as the above lemma, $\tT(p^k)$ and $\tT(p^l)$ are not commutative to each other. 
\end{theorem}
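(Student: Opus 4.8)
The plan is to show that the products $\tT(p^k)\tT(p^l)$ and $\tT(p^l)\tT(p^k)$ differ, by exhibiting a single double coset on which their coefficients disagree. The natural candidate is precisely the element $(C,\vc)$ constructed in the preceding lemma, so I would first record that by the definition of multiplication in the Hecke ring $\tR$, the coefficient of $\tT(\tgr(C,\vc)\tgr)$ in the product $\tT(p^k)\tT(p^l)$ equals the number $m_{(C,\vc)}$ counting factorizations of (a representative of) the double coset of $(C,\vc)$ as a product of an element of $\tgr\bs\tmo_{p^k}$-type and an element of $\tmo_{p^l}$-type, and symmetrically for the reversed product. Since $\tT(p^k)$ is the sum of basis elements $\tT(\xi)$ with $\xi$ running over double cosets inside $\tmo_{p^k}$ (and likewise for $\tT(p^l)$), the key structural point is that $m_{(C,\vc)}\neq 0$ in a product $\tT(p^k)\tT(p^l)$ if and only if $(C,\vc)\in \tmo_{p^k}\cdot\tmo_{p^l}$ (up to the relevant double cosets), which is exactly the dichotomy provided by the lemma.

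\medskip

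More precisely, I would argue as follows. The recalled formula $m_C\neq 0 \iff C\in\gr A\gr B\gr$ from Section \ref{section:BasicTheory}, transported to the ring $\tR$ and summed over the constituent double cosets of $\tT(p^k)$ and $\tT(p^l)$, shows that the coefficient of $\tT(\tgr(C,\vc)\tgr)$ in $\tT(p^k)\tT(p^l)$ is nonzero precisely when $(C,\vc)$ lies in $\tgr\,\tmo_{p^k}\,\tmo_{p^l}\,\tgr$, and similarly the coefficient in $\tT(p^l)\tT(p^k)$ is nonzero precisely when $(C,\vc)$ lies in $\tgr\,\tmo_{p^l}\,\tmo_{p^k}\,\tgr$. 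The sets $\tmo_{p^k}$ and $\tmo_{p^l}$ are unions of full double cosets (the invariant $v_p(\mu(A))$ is constant on each $\tgr(A,\va)\tgr$), and they are stable under left and right multiplication by $\tgr$, so membership of $(C,\vc)$ in $\tgr\,\tmo_{p^k}\tmo_{p^l}\,\tgr$ is equivalent to membership in $\tmo_{p^k}\cdot\tmo_{p^l}$ itself. The lemma then gives $(C,\vc)\in\tmo_{p^k}\cdot\tmo_{p^l}$ but $(C,\vc)\notin\tmo_{p^l}\cdot\tmo_{p^k}$. Hence the coefficient of $\tT(\tgr(C,\vc)\tgr)$ is strictly positive in $\tT(p^k)\tT(p^l)$ and equal to zero in $\tT(p^l)\tT(p^k)$, so the two products cannot be equal, proving the asserted non-commutativity.

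\medskip

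The routine parts are the bookkeeping identifications: that $\tmo_{p^m}$ is a union of double cosets, that the grading by $v_p(\mu(A))$ is $\tgr$-bi-invariant, and that the support of a product $\tT(p^k)\tT(p^l)$ is contained in $\tgr\,\tmo_{p^k}\tmo_{p^l}\,\tgr$ with the claimed nonvanishing criterion. The one point deserving care — and the main obstacle — is verifying that the coefficient of $\tT(\tgr(C,\vc)\tgr)$ genuinely jumps from positive to zero rather than merely being differently distributed: I must ensure that the decomposition $(C,\vc)\in\tmo_{p^k}\cdot\tmo_{p^l}$ supplied by the lemma actually witnesses a nonzero structure constant (which it does, since any such factorization contributes a positive count to $m_{(C,\vc)}$), while the lemma's negative statement $(C,\vc)\notin\tmo_{p^l}\cdot\tmo_{p^k}$ forces the reversed structure constant to vanish identically. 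Once this asymmetry in the support of the two products is established, the conclusion is immediate, since $\tR$ is a free abelian group on the double cosets and equality of the two products would force equality of every coefficient.
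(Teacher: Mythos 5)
Your proposal is correct and follows the same route as the paper: the paper's proof likewise observes that, by the preceding lemma, the coefficient of $\tilde{T}(C,\mathbf{c})$ is nonzero in $\tilde{T}(p^k)\tilde{T}(p^l)$ but zero in $\tilde{T}(p^l)\tilde{T}(p^k)$, so the two products differ. Your additional bookkeeping (the support criterion $m_C \neq 0 \iff C \in \Gamma A \Gamma B \Gamma$, the $\tilde{\Gamma}$-bi-invariance of the grading by $v_p(\mu(A))$, and nonnegativity of the structure constants) is exactly what the paper leaves implicit, and it is all verified correctly.
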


\begin{proof}
By the above lemma, the coefficients of $\tT(C,\vc )$ in 
$\tT(p^k)\tT(p^l)$ and $\tT(p^l)\tT(p^k)$ are respectively not zero and zero, respectively.
\end{proof}
\bigskip

To state the identity for $\tP_n(X)$, we recall the rationality theorem associated with $GSp_{2n}$.

\begin{theorem}[Rationality Theorem, \cite{A1}, \cite{He}, \cite{S1}]
\label{theorem:rationality}
$P_n(X)$ is rational, namely
there exist elements $q_1,...,q_{2^n}$ of $R$ and $g(X) \in R[X]$ such that
\begin{equation}\label{HAeq}
\sum_{i\geq 0} q_i X^i P_n(X)  = g(X),
\end{equation}
where $q_0 = 1$.
Especially, 
\begin{enumerate}
\item If $n=1$ then $q_1 = -T(\diag(1,p))$, $q_2 = p\ep$, $g(X) = 1$.
\\
\item If $n=2$ then 
\eqna{
q_1 &=& -T(\diag(1,1,p,p)),\\
q_2 &=& pT(\diag(1,p,p^2,p)) + p(p^2+1)\ep,\\ 
q_3 &=& -p^3\ep T(\diag(1,1,p,p)),\\ 
q_4 &=& p^6\epp,\\ 
g(X) &=& 1 - p^2 \ep X^2.
}
\end{enumerate}
Where, $\ep=T(pE)$ and $\epp = T(p^2E)$.
\end{theorem}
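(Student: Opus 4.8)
The general rationality is classical, and I would establish it along the Andrianov--Shimura route through the Satake isomorphism, deducing the explicit coefficients for $n = 1, 2$ by direct computation afterwards. First I would pass to the Satake isomorphism
\[
\omega : R \otimes \mathbb{C} \xrightarrow{\ \sim\ } \mathbb{C}[x_0^{\pm 1}, x_1^{\pm 1}, \dots, x_n^{\pm 1}]^{W},
\]
where $W$ is the Weyl group of $GSp_{2n}$, acting on the variables by permuting $x_1, \dots, x_n$ and by the involutions $x_i \mapsto x_i^{-1}$, $x_0 \mapsto x_0 x_i$. Since $\omega$ identifies $R \otimes \mathbb{C}$ with the whole ring of $W$-invariants, it suffices to prove the asserted identity after applying $\omega$ and then to descend.

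The central step is to evaluate the image of the generating series. Setting $\lambda_J = x_0 \prod_{i \in J} x_i$ for each subset $J \subseteq \{1, \dots, n\}$, one checks that the chosen action of $W$ permutes the $2^n$ parameters $\{\lambda_J\}_J$ (the reflection $x_i \mapsto x_i^{-1},\ x_0 \mapsto x_0 x_i$ interchanges $\lambda_J$ and $\lambda_{J \triangle \{i\}}$). The Andrianov--Shimura evaluation of the spherical series then gives
\[
\sum_{k \geq 0} \omega\bigl(T(p^k)\bigr) X^k = \frac{g_0(X)}{\prod_{J \subseteq \{1, \dots, n\}} \bigl(1 - \lambda_J X\bigr)},
\]
for a polynomial $g_0(X)$, the denominator being the reciprocal of the local spinor factor (here the requisite powers of $p$ enter through the normalized Satake transform). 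The product ranges over the $2^n$ subsets $J$, which is what forces the degree bound $2^n$. I expect this evaluation to be the main obstacle: it rests on a detailed analysis of the double cosets in $\gr \bs \mo / \gr$ graded by the valuation of the similitude, and for $n = 2$ it is precisely the substance of Shimura's calculation.

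The descent is then formal. Expanding the denominator as $\sum_{i=0}^{2^n} (-1)^i e_i\bigl(\{\lambda_J\}_J\bigr) X^i$, each coefficient is symmetric in the $\lambda_J$ and hence, by the displayed stability, $W$-invariant; therefore it equals $\omega(q_i)$ for a unique $q_i \in R \otimes \mathbb{C}$, and a check of integrality places $q_i$ in $R$. Putting $g = \omega^{-1}(g_0)$ and $q_0 = 1$ yields the asserted identity.

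Finally, for the explicit shapes I would specialize. When $n = 1$ one has $GSp_2 = GL_2$ and the identity collapses to the classical Hecke relation $\bigl(1 - T(\diag(1,p)) X + p\, \ep\, X^2\bigr) P_1(X) = 1$, giving $q_1$, $q_2$, $g$ as stated. When $n = 2$ I would compute directly in the rank-two Hecke ring, expressing the elementary symmetric functions of the four spinor parameters in terms of the generators $T(\diag(1,1,p,p))$, $T(\diag(1,p,p^2,p))$, $\ep$ and $\epp$; the resulting relations produce the displayed $q_1, \dots, q_4$ and $g(X) = 1 - p^2\, \ep\, X^2$.
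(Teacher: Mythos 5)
Your proposal is correct in outline, but note first that the paper itself offers no proof of this theorem: it is imported as a classical result with citations to Hecke ($n=1$), Shimura ($n=2$) and Andrianov (general $n$), so there is no internal argument to compare against. Your sketch faithfully reconstructs Andrianov's route: the Satake-type spherical map onto $W$-invariants, the verification that the Weyl group permutes the $2^n$ spinor parameters $\lambda_J = x_0\prod_{i\in J}x_i$ (your computation that the reflection in $x_i$ swaps $\lambda_J$ and $\lambda_{J\triangle\{i\}}$ is right), and the descent of the elementary symmetric functions of the $\lambda_J$ to elements $q_i$ of the Hecke ring. You correctly identify the genuine content --- the summation of the spherical series, i.e.\ the evaluation of $\sum_k \omega(T(p^k))X^k$ with denominator $\prod_J(1-\lambda_J X)$ after the appropriate $p$-power normalization --- and you defer it to the cited computations, which is exactly the status the theorem has in this paper. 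Two small remarks: historically Shimura's $n=2$ proof proceeds via explicit double-coset recursion relations in the Hecke ring rather than through the Satake factorization (Andrianov's method), though both yield the displayed $q_1,\dots,q_4$ and $g(X)=1-p^2\langle p\rangle X^2$ where $\langle p\rangle = T(pE)$; and your final integrality step, while stated briefly, is unproblematic here because the explicit expressions for $n=1,2$ exhibit the $q_i$ directly as $\mathbb{Z}$-combinations of double cosets, and uniqueness follows since $R_n$ is a polynomial ring, as the paper notes in the introduction.
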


Note that the sequence $\{q_i\}_i$ and $g(X)$ depend on $n$.
Our previous paper \cite{Hy} shows the following theorem:
\begin{theorem}[{\cite[Theorem 7.8]{Hy}}]
Let us keep the notations of Theorem \ref{theorem:rationality}.
If $n=1$ then $\tP_1(X)$ satisfies the following identity.
\[\tP_1^{\theta^2}(X) - q^s_1 Y\tP_1^{\theta}(X)   + q^s_2 Y^2\tP_1(X) = 1,\]
where $Y = pX$. 
\end{theorem}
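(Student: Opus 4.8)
The plan is to reduce the asserted generating-function identity to a family of coefficient relations and to establish those by lifting the classical $GL_2$ Hecke recursion through the product formula of Lemma~\ref{BasicTheory:lemma:tTAzerotTB}. Extracting the coefficient of $X^k$ (recall $Y = pX$, and that $\ep^s$ and $T(\diag(1,p))^s$ lie in ${\rm Im}(s)$, hence are $\theta$-fixed by Corollary~\ref{cor:FormulaTheta}) and substituting the explicit values $q_1 = -T(\diag(1,p))$ and $q_2 = p\,\ep$ of Theorem~\ref{theorem:rationality}, the identity is equivalent to
\[
\tT(p^k)^{\theta^2} - p\,\tT(\diag(1,p),\vzero)\,\tT(p^{k-1})^{\theta} + p^3\,\tT(pE,\vzero)\,\tT(p^{k-2}) = \delta_{k,0} \qquad (k \ge 0),
\]
with the convention $\tT(p^{-1}) = \tT(p^{-2}) = 0$. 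The cases $k = 0, 1$ are immediate: $\tT(p^0) = 1$ is $\theta$-fixed, and for $k = 1$ a single application of $\theta$ multiplies the vector part by $p$, which already lies in $\diag(1,p)\abgr$, so $\theta$ collapses every vector orbit attached to $\diag(1,p)$ and one reads off $\tT(p^1)^{\theta^2} = p\,\tT(\diag(1,p),\vzero)$. This already exhibits the mechanism: $\theta$ governs the vector parts while the matrix parts obey the commutative recursion.

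For $k \ge 2$ the relation rearranges to
\[
p\,\tT(\diag(1,p),\vzero)\,\tT(p^{k-1})^{\theta} = \tT(p^k)^{\theta^2} + p^3\,\tT(pE,\vzero)\,\tT(p^{k-2}),
\]
and the two clean operations at our disposal tame the right-hand side. By Lemma~\ref{BasicTheory:RightAction} (with $C_0 = pE$), left multiplication by $\ep^s = \tT(pE,\vzero)$ sends each constituent $\tT(B,\vb)$ to $\tT(pB,p\vb)$, so $\tT(pE,\vzero)\,\tT(p^{k-2})$ is a transparent sum of weight-$k$ terms whose matrix and vector parts are both divisible by $p$; by Corollary~\ref{cor:FormulaTheta}, $\theta$ multiplies vector parts by $p$ with an explicit orbit-size normalization. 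For the left-hand side, since $\tT(\diag(1,p),\vzero)$ is $\theta$-fixed I would rewrite it as $\bigl(\tT(\diag(1,p),\vzero)\,\tT(p^{k-1})\bigr)^{\theta}$ and expand $\tT(\diag(1,p),\vzero)\,\tT(p^{k-1}) = \sum_B \tT(\diag(1,p),\vzero)\,\tT(B)$ termwise with Lemma~\ref{BasicTheory:lemma:tTAzerotTB}, which gives $\tT(\diag(1,p),\vzero)\,\tT(B) = \sum_{(C,\vc)} \lvert \gr \backslash {\mathcal Y}(\diag(1,p),B,C,\vc)\rvert\,\tT(C,\vc)$.

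The matching then proceeds by splitting the weight-$k$ double cosets $\gr C\gr$ into the primitive ones (with a unit elementary divisor) and the imprimitive ones (divisible by $pE$), exactly as in the commutative recursion $T(\diag(1,p))\,T(p^{k-1}) = T(p^k) + p\,\ep\,T(p^{k-2})$ underlying Theorem~\ref{theorem:rationality}. For the matrix parts this is the known $GL_2$ computation. The new content is the vector part: one must show that the counts $\lvert \gr \backslash {\mathcal Y}(\diag(1,p),B,C,\vc)\rvert$ — the classical intersection numbers cut down by the integrality condition $\hat{\alpha}^{-1}\vc \in \abgr$ of Lemma~\ref{BasicTheory:lemma:tTAzerotTB} — distribute over the orbits of $\vc$ so as to reproduce precisely the $\theta$-twisted operators on the right. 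I expect the primitive terms to assemble into $\tT(p^k)^{\theta^2}$, each surviving orbit carrying the correct multiplicity, and the imprimitive terms, where $\vc$ is forced to be $p$-divisible, to recombine into $p^3\,\tT(pE,\vzero)\,\tT(p^{k-2})$ through the identification $\tT(pB,p\vb) = \ep^s\,\tT(B,\vb)$.

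The main obstacle is exactly this vector-part bookkeeping: controlling how the condition $\hat{\alpha}^{-1}\vc \in \abgr$ interacts with the lattice $\diag(1,p)\abgr$ and with the scaling $\vc \mapsto p\vc$ underlying $\theta$, and checking orbit-by-orbit that the resulting counts agree with the normalization ratios $\lvert \gr^A * (\va \bmod A\abgr)\rvert / \lvert \gr^A * (p\va \bmod A\abgr)\rvert$ of Corollary~\ref{cor:FormulaTheta}. Once these are verified the coefficient identity, and hence the theorem, follows. As a consistency check, applying $\phi$ to the identity and using $\phi \circ s = \mathrm{id}$, $\phi \circ \theta = \phi$, and Corollary~\ref{corollary:phiDtD} collapses it to the classical rationality relation of Theorem~\ref{theorem:rationality}.
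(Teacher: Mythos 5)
Your overall strategy is the right one, and it is in fact the same machinery this paper runs: note that the paper does not reprove this $n=1$ statement (it cites \cite[Theorem 7.8]{Hy}), but its own $n=2$ argument in Sections \ref{section:Heisenberg Lie algebras}--\ref{section:MainResult} is precisely the scaled-up version of your plan. Your reduction to the coefficient relations $\tilde{T}(p^{k+2})^{\theta^2} - p\,T(\diag(1,p))^s\,\tilde{T}(p^{k+1})^{\theta} + p^3\,T(pE)^s\,\tilde{T}(p^{k}) = 0$ is correct and matches the paper's own reduction (the proposition that (\ref{equation:general:main}) holds modulo $X^{2^n}$, followed by Problem 4.12); you also read the sign correctly, resolving the discrepancy between the statement's ``$-q_1^s$'' and equation (\ref{eq:previous}); and your computation $\tilde{T}(p)^{\theta}=p\,\tilde{T}(\diag(1,p),\vzero)$, as well as the closing $\phi$-consistency check, are both right. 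The problem is that what you have written is a plan, not a proof: the step you yourself flag as ``the main obstacle'' --- verifying orbit by orbit that the counts $\card{\gr\backslash{\mathcal Y}(A,C,\vc)}$ reproduce the $\theta$-twisted right-hand side --- is the entire mathematical content of the theorem, and it is exactly what is deferred with ``I expect.''

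Moreover, one detail of your anticipated matching is wrong as stated and shows the bookkeeping cannot be waved through. On the imprimitive double cosets $C\in p\mo$ one has ${\mathcal X}(A,C)=\gr A\gr$, and the integrality condition $\hat{\alpha}^{-1}\vc\in\abgr$ does \emph{not} force $\vc$ to be $p$-divisible: taking the left-coset representatives $\diag(1,p)$ and $\left(\begin{smallmatrix}p&0\\j&1\end{smallmatrix}\right)$, $0\le j<p$, of $\gr\backslash\gr A\gr$, every primitive $\vc$ receives count exactly $1$ and every $p$-divisible $\vc$ receives $1+p$, so the imprimitive contribution is the mixture $\tilde{T}(p{\mathcal C}_k,\Z_p^2)+p\,\tilde{T}(p{\mathcal C}_k,p\Z_p^2)$, and it recombines into the $p^3\,T(pE)^s\,\tilde{T}(p^k)$ term only \emph{after} applying $\theta$ via the $n=1$ analogue of Lemma \ref{lemma:Thetheta} (e.g.\ $\tilde{T}(p{\mathcal C}_k,\Z_p^2)^{\theta}=p^2\,T(pE)^s\,\tilde{T}(p^k)$, from Proposition \ref{Thetheta:tT(A)theta}); similarly the primitive cosets first produce intermediate terms such as $\tilde{T}(C,A\Z_p^2)$, not ready-made $\theta^2$-twists. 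A complete proof therefore requires the $n=1$ analogues of Lemma \ref{Caluculation:lemma:Decomposition} (the orbit decomposition $\Z_p^2=\gr^{A}\ve_2\cup\gr^{A}\ve_1\cup p\Z_p^2$ for $A=\diag(1,p)$), of Proposition \ref{Calculation:prop:ggAg} and Corollary \ref{Calculation:cor:ggAg} (explicit representatives and the case-by-case counts in $C$), and of Lemma \ref{lemma:Thetheta}, assembled as in Proposition \ref{prop:TATpnTheta}; one structural consolation is that for $n=1$ there is a single product to compute, so no cross-cancellation between two propositions (as between Propositions \ref{prop:TATpnTheta} and \ref{prop:TBTpn} in Corollary \ref{cor:maincor}) is needed. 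Until those counts are executed, the identity for $k\ge2$ is asserted rather than proved.
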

\medskip

We note that it derives the rationality of $P_1(X)$ using Corollary \ref{corollary:phiDtD}.
Moreover, the above identity is a solution of the following problem in the case where $n=1$.

\begin{problem}\label{myprob}
Let us keep the notations of Theorem \ref{theorem:rationality}.
Does $\tP_n(X)$ satisfy the following formula for each $n$ ?
\begin{equation}
\label{equation:general:main}
\sum^{2^n}_{k=0} q^s_k Y^k \tP_n^{\theta^{2^n-k}}(X) = g^s(Y),
\end{equation}
where $Y = p^nX$.
\end{problem}

Remark that,  for each $n$, this identity implies the rationality of $P_n(X)$ via the morphism $\phi_n$.
In the present paper, we solve our problem for the case $n=2$.
\bigskip

\begin{proposition}
 Equality (\ref{equation:general:main}) is true modulo $X^{2^n}$. 
\end{proposition}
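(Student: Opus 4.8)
The plan is to prove the congruence coefficient by coefficient in $X$. Write $N=2^n$ and $Y=p^nX$, so that the left-hand side of (\ref{equation:general:main}) equals $\sum_{k=0}^{N} q_k^s\,p^{nk}\,X^{k}\,\tP_n^{\theta^{N-k}}(X)$. Since $\tP_n(X)=\sum_{j\ge 0}\tT(p^j)X^j$, the coefficient of $X^m$ on the left is $\sum_{k=0}^{m} q_k^s\,p^{nk}\,\tT(p^{m-k})^{\theta^{N-k}}$, while the coefficient of $X^m$ in $g^s(Y)=g^s(p^nX)$ is $p^{nm}g_m^s$, where $g_m\in R$ denotes the coefficient of $X^m$ in $g(X)$. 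I would show these agree for every $m$ with $0\le m<N$.

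The engine of the proof is a stabilization estimate for $\theta$ on the coefficients of $\tP_n(X)$. First I would record that if $v_p(\mu(A))=j$ then $A\abgr\supseteq\abgr A=p^{j}\abgr$, hence $p^{j}\va\in p^{j}\abgr\subseteq A\abgr$ for every $\va\in\abgr=\Z_p^{2n}$. By Proposition \ref{BasicTheory:DoubleCoset} this gives $\tT(A,p^{j}\va)=\tT(A,\vzero)$, and together with the formula for $\tT(A,\va)^{\theta}$ in Corollary \ref{cor:FormulaTheta} (which carries $\tT(A,\va)$ to a scalar multiple of $\tT(A,p\va)$) and the fact that $\theta$ fixes $\mathrm{Im}(s)$ pointwise, it follows that the sequence $\{\tT(A,\va)^{\theta^{t}}\}_t$ is already stationary for $t\ge j$. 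Summing over the double cosets occurring in $\tT(p^{j})$ yields $\tT(p^{j})^{\theta^{t}}=\tT(p^{j})^{\theta^{\infty}}$ for all $t\ge j$.

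Now for $m<N$ and $0\le k\le m$ the number $N-k$ of applications of $\theta$ is at least the stabilization threshold $m-k$ of $\tT(p^{m-k})$, because $N>m$; this is exactly where the hypothesis $m<2^n$ enters. Hence $\tT(p^{m-k})^{\theta^{N-k}}=\tT(p^{m-k})^{\theta^{\infty}}=s\bigl(\tT(p^{m-k})^{\phi}\bigr)=s\bigl(p^{\,n(m-k)}T(p^{m-k})\bigr)$, using $\theta^{\infty}=s\circ\phi$ and Proposition \ref{prop:Relation:Heisenberg}(3) in the summed form $\tT(p^{k})^{\phi}=p^{nk}T(p^{k})$. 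Substituting and using that $s$ is a ring homomorphism, the $X^m$-coefficient of the left-hand side collapses to $p^{nm}\,s\bigl(\sum_{k=0}^{m}q_k\,T(p^{m-k})\bigr)$. By the Rationality Theorem \ref{theorem:rationality}, the inner sum is the $X^m$-coefficient of $\bigl(\sum_i q_iX^i\bigr)P_n(X)=g(X)$, namely $g_m$ (only indices $k\le m<2^n$ occur, so no $q_k$ with $k>2^n$ is needed). Therefore the left coefficient equals $p^{nm}s(g_m)=p^{nm}g_m^s$, matching the right-hand side, and the congruence modulo $X^{N}$ follows.

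The only genuinely delicate step is the stabilization estimate of the second paragraph: pinning down that $j$ applications of $\theta$ already reach $\theta^{\infty}$ on $\tT(p^{j})$, so that $N-k\ge m-k$ suffices whenever $m<N$. Once this threshold is in hand, the remainder is pure bookkeeping of coefficients combined with the homomorphism property of $s$ and the already-established identity $\tT(p^{j})^{\phi}=p^{nj}T(p^{j})$; no fresh computation inside $\tR$ is required.
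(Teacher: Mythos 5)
Your proof is correct and is essentially the paper's own argument in expanded form: the paper likewise works coefficientwise, noting that for $m<2^n$ each coefficient of the left-hand side is a linear combination, with coefficients in ${\rm Im}(s)$, of elements $\tilde{T}(p^{l})^{\theta^{t}}$ with $l\le t$, hence lies in ${\rm Im}(s)$, and then concludes from the injectivity of $s$ together with Theorem \ref{theorem:rationality}. The stabilization threshold you establish (that $\abgr A=p^{l}\abgr\subseteq A\abgr$ forces $\tilde{T}(p^{l})^{\theta^{t}}=\tilde{T}(p^{l})^{\theta^{\infty}}$ for $t\ge l$) and the explicit evaluation via $\theta^{\infty}=s\circ\phi$ and $\tilde{T}(p^{k})^{\phi}=p^{nk}T(p^{k})$ are exactly the details the paper's terse proof leaves implicit.
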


\begin{proof}
Let $h(X) = \sum_{k=0}^\infty a_k X^k$ be the left hand side of (\ref{equation:general:main}).
Then, for each $k \leq 2^n$, 
$a_k$ is a linear combination of
elements of $\{\tT\left(p^l\right)^{\theta^m}\}_{l \leq m}$ with coefficients in the image of $s$.
Hence for each $k \leq 2^n$, $a_k$ is an element of the image $s$.
The proposition follows from the injectivity of $s$ and the Theorem \ref{theorem:rationality}.
\end{proof}

Thus we are reduce to proving the following problem.

\begin{problem}Put $N=2^n$.
For all $k \geq 0$, Is the following identity true?
\[\displaystyle \sum_{i=0}^N p^{n(N-i)}q^s_{N-i}\tT\left(p^{k+i}\right)^{\theta^i}=0.\]

\end{problem}

\newcommand{\calY}{{\cal Y}}
\newcommand{\calX}{{\cal X}}

In the rest of this section, we give a formula of the product $\tT(A,\vzero) \tT(p^k)$ using Lemma \ref{BasicTheory:lemma:tTAzerotTB}.
Define $\hat{A} = \mu(A)A^{-1}$ for each $A \in G$. Then the anti-automorphism $(A \mapsto \hat{A})$ satisfies the condition in Subsection \ref{section:CommutativeCase}.
Since
\eqna{
\calX (A,B,C) 
&=&\left\{ \alpha \in \gr A \gr  \ | \ \alpha C \in  \mu(A)\gr B \gr \right\},\\ 
\calY(A,B,C,\vc) 
&=& \set{\alpha \in \calX(A,B,C) \ | \  \alpha \vc \in \mu(A)\abgr},
}
 for all $A,B,C \in \mo$, $\vc \in \abgr$, we have:
 
\begin{lemma}\label{HeisenbergLiealgebras:lemma:tTAzerotTB}
For each $A, C \in \mo$, $\vc \in \abgr$, put 
\eqna{
\calX (A,C) &=&\left\{ \alpha \in \gr A \gr  \ | \ \alpha C \in  \mu(A)\mo \right\},\\
\calY(A,C,\vc) &=& \set{\alpha \in \calX(A,C) \ | \ \alpha \vc \in \mu(A)\abgr}.
}
Then, for each $A\in \mo$,
\eqna{\tT(A,\vzero) \tT(p^k) &=& 
\sum_{(C,\vc)}
\card{ \gr \bs \calY(A,C,\vc)}
\tT(C,\vc),}
where $(C,\vc)$ runs through a system of representatives of 
$\tgr \bs \tmo / \tgr$ and
satisfies $v_p(\mu\left(A^{-1}C)\right) = k$.
\end{lemma}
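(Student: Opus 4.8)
The plan is to prove the product formula in Lemma~\ref{HeisenbergLiealgebras:lemma:tTAzerotTB} by specializing the general product formula of Lemma~\ref{BasicTheory:lemma:tTAzerotTB} to the present situation, where $G = GSp_{2n}(\Qp)$, the anti-automorphism is $(A \mapsto \hat{A}) = (A \mapsto \mu(A)A^{-1})$, and $\tT(p^k)$ is the sum $\sum \tT(\gr B \gr)$ over representatives $B$ with $v_p(\mu(B)) = k$. First I would verify that $(A \mapsto \hat{A})$ genuinely satisfies the hypotheses of Subsection~\ref{section:CommutativeCase}: it is an anti-automorphism of $G$ (since $\widehat{AB} = \mu(AB)(AB)^{-1} = \mu(A)\mu(B)B^{-1}A^{-1} = \hat{B}\hat{A}$, using that $\mu$ is a homomorphism) and it preserves each double coset $\gr A \gr$ (a standard fact for $GSp_{2n}$, where $\hat{A}$ is $GSp_{2n}(\Zp)$-equivalent to $A$). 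This is the step the paper already asserts, so I would state it and give the one-line verification.

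Next I would substitute into the general formula. Lemma~\ref{BasicTheory:lemma:tTAzerotTB} gives
\[
\tT(A,\vzero)\,\tT(B) = \sum_{(C,\vc)} \card{\gr \bs \calY(A,B,C,\vc)}\,\tT(C,\vc),
\]
with $\calY(A,B,C,\vc) = \{\alpha \in \calX(A,B,C) \mid \hat{\alpha}^{-1}\vc \in \abgr\}$ and $\calX(A,B,C) = \{\alpha \in \gr A \gr \mid \hat{\alpha}^{-1}C \in \gr B \gr\}$. The key algebraic simplification is to rewrite the defining conditions so that $B$ drops out in favor of the multiplier condition $v_p(\mu(A^{-1}C)) = k$. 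Using $\hat{\alpha}^{-1} = \mu(\alpha)^{-1}\alpha$ and $\mu(\alpha) = \mu(A)$ for $\alpha \in \gr A \gr$, the condition $\hat{\alpha}^{-1}C \in \gr B \gr$ becomes $\mu(A)^{-1}\alpha C \in \gr B \gr$, i.e. $\alpha C \in \mu(A)\gr B \gr$, which matches the displayed rewriting of $\calX(A,B,C)$ in the excerpt. Likewise $\hat{\alpha}^{-1}\vc \in \abgr$ becomes $\alpha \vc \in \mu(A)\abgr$, matching $\calY(A,B,C,\vc)$.

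Then I would sum over all $B$ with $v_p(\mu(B)) = k$. Since $\mu(\alpha C) = \mu(A)\mu(C)$ for $\alpha \in \gr A \gr$, the membership $\alpha C \in \mu(A)\gr B \gr$ for some $B$ with $v_p(\mu(B)) = k$ is equivalent to the single condition that $\alpha C \in \mu(A)\mo$ together with $v_p(\mu(\mu(A)^{-1}\alpha C)) = v_p(\mu(C)) = k$; conversely each $\alpha$ selects exactly one double coset $\gr B \gr$. I would therefore replace $\calX(A,B,C)$ by the $B$-free set $\calX(A,C) = \{\alpha \in \gr A \gr \mid \alpha C \in \mu(A)\mo\}$ and restrict the outer sum to those $(C,\vc)$ with $v_p(\mu(A^{-1}C)) = k$, which is the stated conclusion. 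The main obstacle I anticipate is the bookkeeping in this last step: ensuring that summing the formula of Lemma~\ref{BasicTheory:lemma:tTAzerotTB} over the (finitely many) $B$ with $v_p(\mu(B)) = k$ partitions the relevant $\alpha$'s correctly and does not double-count, which amounts to checking that for fixed $\alpha \in \calX(A,C)$ the double coset $\gr B \gr$ with $\hat\alpha^{-1}C \in \gr B\gr$ is uniquely determined and has the right multiplier valuation. Once that partition is confirmed, the identity follows by collecting terms.
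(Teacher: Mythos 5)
Your proposal is correct and follows essentially the same route as the paper: the paper also obtains this lemma by specializing Lemma~\ref{BasicTheory:lemma:tTAzerotTB} to the anti-automorphism $\hat{A}=\mu(A)A^{-1}$, rewriting the defining conditions as $\alpha C \in \mu(A)\gr B \gr$ and $\alpha \vc \in \mu(A)\abgr$, and summing over the double cosets $\gr B \gr$ with $v_p(\mu(B))=k$, with your uniqueness-of-$B$ and multiplier-valuation check being precisely the (implicit) content of that summation step. The only nitpick is that $\mu(\alpha)=\mu(A)$ for $\alpha\in\gr A\gr$ holds only up to a unit in $\Zp^{\times}$, which is harmless because scalar unit matrices lie in $\gr$ and preserve $\abgr$.
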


}
\section{The local Hecke series associated with the Heisenberg Lie algebra of dimension 5}
{
\label{section:MainResult}
\newcommand{\F}{\mathbb F}
\newcommand{\Fp}{{\mathbb F}_p}

\newcommand{\set}[1]{\left\{ #1 \right\}}
\newcommand{\card}[1]{\left| #1 \right|}

\newcommand{\Tmat}{\mat{0 & 1 \\ 1 & 0}}

\newcommand{\tT}[1]{\tilde{T}\left( #1\right)}

\newcommand{\ep}
{{\langle p \rangle}}
\newcommand{\epp}
{{\langle p^2 \rangle}}
\newcommand{\tep}
{\ep^s}
\newcommand{\tepp}
{\epp^s}

\newcommand{\Imat}{I}
\newcommand{\Zmat}{O}

\newcommand{\diagmat}[4]
{
	\pmat{
		#1 & 0 & 0 & 0\\
		0 & #2 & 0 & 0\\
		0 & 0 & #3 & 0\\
		0 & 0 & 0 & #4\\
	}
}

\renewcommand{\vec}[4]
{
	\pmat{
		#1\\
		#2\\
		#3\\
		#4\\
	}
}
\newcommand{\pmat}[1]
{
		\begin{pmatrix}
			#1
		\end{pmatrix}
}
\newcommand{\mat}[1]
{
		\begin{matrix}
			#1
		\end{matrix}
}
\newcommand{\eqna}[1]{
	\begin{eqnarray*}
		#1
	\end{eqnarray*}
}

{
In this section, we shall prove our main theorem.
Let us keep the notation of Section \ref{section:Heisenberg Lie algebras},
and suppose $n=2$. First we introduce some notation.
\[
\ve_1 = \vec{1}{0}{0}{0}, 
\ve_2 = \vec{0}{1}{0}{0}, 
\ve_3 = \vec{0}{0}{1}{0},
\] 
\eqna{
P_{1}&=&
\diag(1,1,p,1),\\
P_{3}&=&
\diag(1,p,p,p),\\
A&=&
\diag(1,1,p,p),\\
B &=& P_1P_3 = 
\diag(1,p,p^2,p),\\
		\gr^A&=& \gr \cap A \gr A^{-1} = 
			\set{
			\pmat{	*	& * 	&* 	&* \\
				*	&*	&*	&* \\
				p*	&p* 	&* 	&* \\
				p*	&p*	&*	&* \\
			}
			}
			\cap
			\gr,\\
		\gr'&=& \gr^{P_1} = \gr^{P_3} =
			\set{
			\pmat{	*	& * 	&* 	&* \\
				p*	&*	&*	&* \\
				p*	&p* 	&* 	&p* \\
				p*	&*	&*	&* \\
			}
			}
			\cap
			\gr.		
}
\medskip

Next, we provide a lemma.

\begin{lemma}\label{Caluculation:lemma:Decomposition}
The following identities hold.
\begin{eqnarray}
\label{eq:grprime_e3}
\gr' \ve_3 &=& \Zp^4-P_1\Zp^4,\\
\label{eq:grprime_e2}
\gr' \ve_2 &=& P_1\Zp^4 - P_3\Zp^4,\\
\label{eq:grprime_e1}
\gr' \ve_1 &=& P_3\Zp^4 - p\Zp^4,\\
\label{eq:grA_e3}
\gr^{A}\ve_3 &=& \Zp^4 - A\Zp^4,\\
\label{eq:grA_e1}
\gr^{A}\ve_1 &=& A\Zp^4 - p\Zp^4.
\end{eqnarray}
\end{lemma}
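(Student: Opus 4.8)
The plan is to prove each of the five identities as an equality of subsets of $\Zp^4$ by establishing the two inclusions separately. I would note first that every right-hand side is a \emph{shell} cut out by congruence conditions modulo $p$, and that the relevant lattices form the nested chains $p\Zp^4 \subset P_3\Zp^4 \subset P_1\Zp^4 \subset \Zp^4$ and $p\Zp^4 \subset A\Zp^4 \subset \Zp^4$; thus the three orbits $\gr'\ve_1, \gr'\ve_2, \gr'\ve_3$ (resp.\ the two orbits $\gr^{A}\ve_1, \gr^{A}\ve_3$) are predicted to partition $\Zp^4 - p\Zp^4$. The groups $\gr^{A}$ and $\gr'$ are the stabilizers in $\gr$ of the coordinate flags attached to $A$ and $P_1$, i.e.\ parabolic-type subgroups, and the two chains of identities can be treated more or less in parallel since the arguments are structurally identical.

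For the inclusions $\subseteq$ I would argue directly from the displayed matrix shapes of $\gr'$ and $\gr^{A}$. For $g$ in one of these groups, $g\ve_i$ is just the $i$-th column of $g$, and reading off the prescribed powers of $p$ in that column places $g\ve_i$ in the appropriate \emph{larger} lattice of the relevant chain (for instance the first column of any $g \in \gr'$ has shape $(\ast, p\ast, p\ast, p\ast)^{t}$, so $g\ve_1 \in P_3\Zp^4$). To exclude the next \emph{smaller} lattice I would reduce modulo $p$ and use $\det g \in \Zp^{\times}$: the columns of $\bar g \in GSp_4(\Fp)$ are linearly independent, which forces the distinguishing coordinate (or pair of coordinates) of $\bar g\ve_i$ to be nonzero. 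For example $g\ve_3 \notin P_1\Zp^4$ amounts to $\bar g_{33}\neq 0$, which follows from $\det\bar g\neq 0$ after expanding along the zero pattern of $\bar g$; and $g\ve_2\notin P_3\Zp^4$ holds because otherwise columns $1$ and $2$ of $\bar g$ would be proportional. Running through the five cases in this way gives all the inclusions $\subseteq$.

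The substance of the lemma is the reverse inclusions $\supseteq$, i.e.\ the transitivity of $\gr'$ (resp.\ $\gr^{A}$) on each shell. Given a target $\vx$ in one shell, I would produce an element carrying $\ve_i$ to $\vx$ by a symplectic Gaussian elimination constrained to the parabolic: use the diagonal torus of $\gr'$ to normalize the unit coordinate of $\vx$ to $1$, then clear the remaining coordinates with symplectic transvections and unipotent elements chosen to lie in $\gr'$. The key point is that the $p$-divisibility pattern defining $\gr'$ matches exactly the congruence conditions defining the shell, so each elementary move stays integral of the prescribed shape \emph{and} preserves the similitude form; iterating reduces $\vx$ to $\ve_i$. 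Equivalently, and perhaps more cleanly, one may first check transitivity of the finite group $\bar\gr' \subset GSp_4(\Fp)$ on each mod-$p$ shell and then lift through the principal congruence subgroup by successive approximation, using that every shell is a union of cosets of $p\Zp^4$.

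I expect the reverse inclusions to be the main obstacle, specifically the bookkeeping of keeping each reducing matrix simultaneously in $GSp_4(\Zp)$ (not merely in $GSp_4(\Qp)$) and of the exact integral shape of $\gr'$ or $\gr^{A}$, together with verifying that the elimination terminates inside the shell. Among these the $\gr^{A}$ identities and the outer and inner $\gr'$-shells for $\ve_3$ and $\ve_1$ are the most transparent, whereas the middle identity $\gr'\ve_2 = P_1\Zp^4 - P_3\Zp^4$ is the delicate one: the target has $\bar x_3 = 0$ but $(\bar x_2,\bar x_4)\neq 0$, so the normalization must be carried out inside the hyperplane $\langle\ve_1\rangle^{\perp} = \{x_3 = 0\}$ while respecting the narrower lattice $P_1\Zp^4$, and this is where I would spend the most care.
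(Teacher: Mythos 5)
Your proposal is correct and matches the paper's proof in essence: the paper also treats the inclusions $\subseteq$ as routine and proves $\supseteq$ by exactly the constrained elimination you describe, normalizing the unit coordinate via torus and embedded $SL_2(\Zp)$ (resp.\ $GL_2(\Zp)$) blocks lying in $\gr'$ (resp.\ $\gr^{A}$) and then clearing the remaining coordinates with explicit unipotent symplectic matrices $P(x,y,z)$ and $Q(x,y)$. Your flagged delicate point is the right one: in the middle identity the paper's clearing matrix $Q(a,c)$ lies in $\gr'$ precisely because $p \mid c$ on the shell $P_1\Zp^4 - P_3\Zp^4$, and the normalization there is done by $T_{2,3}\left(\begin{smallmatrix}E & O\\ O & X\end{smallmatrix}\right)T_{2,3}$ with $X \in SL_2(\Zp)$ acting on coordinates $2$ and $4$, exactly the hyperplane constraint you anticipated.
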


\begin{proof}
It is essential to prove that the left-hand-side contains the right-hand-side for each identity.
Put 
\[
P(x,y,z)=
\pmat{
	1 & -z & x & y\\
	0 & 1 & y & 0 \\
	0 & 0 & 1 & 0 \\
	0 & 0 & z & 1 \\ 
},\ 
Q(x,y)=
\pmat{
	1 & x & 0 & 0\\
	0 & 1 & 0 & 0 \\
	0 & y & 1 & 0 \\
	y & 0 & -x & 1 \\ 
}
\]
for each $x,y,z \in \Zp$.
Clearly, they are elements of $\gr$. 
Let \[\va = \vec{a}{b}{c}{d}\] be an element of $\Zp^4$.
\begin{enumerate}
\renewcommand{\labelenumi}{(\arabic{enumi}).}
\setcounter{enumi}{\arabic{equation}}
\addtocounter{enumi}{-5}
\item
For $\va  \in \Zp^4-P_1\Zp^4$ , we show
$\gr'\va = \gr' \ve_3$.
We are reduced to the case where $c = 1$.
Then 
\[
\va = P(a,b,d)\ve_3.
\] 
\item 
We assume $\va \in P_1\Zp^4 - P_3\Zp^4$. 
For $X \in SL_2(\Zp)$,
\[
T_{2,3}
\pmat{
		E & O\\
		O & X
		}T_{2,3}
\]
is an element of $\gr'$, where $T_{2,3}$ is the permutation matrix corresponding to $(2 \ 3) \in S_4$.
Hence we are reduce to the case where $b=1$ and $d=0$. Then
\[
\va =Q(a,c)\ve_2.
\]
\item
For $\va \in P_3\Zp^4 - p\Zp^4$, 
we show $\gr'\va = \gr' \ve_1$.
We are reduced to the case where $a = 1$. Then
\[
\va = \ ^tP(c,d,-b)\ve_1.
\]
\item
For $\va \in \Zp^4 - A\Zp^4$, 
we show $\gr^A \va = \gr^A \ve_3$.
Since 
\[
\pmat{X & O \\ O &  \ ^t X^{-1}} \in \gr^{A}
\] for each $X \in GL_2(\Zp)$,
we may assume that $c=1,d=0$. Then
\[\va = 
P(a,b,0)
\ve_3
\]
\item
For $\va \in  A\Zp^4 - p\Zp^4$ , 
we show $\gr^A \va = \gr^A \ve_1$.
We may assume that $a=1,b=0$ for the same reason. Then
\[\va = 
~^tP(c,d,0)
\ve_1.
\]
\end{enumerate}
This completes the proof.
\end{proof}

\begin{corollary}\label{Caluculation:cor:Decomposition}
The following identities hold.
	\eqna{ 
		 \Z_p^4
		&=&
		\gr'
		\ve_3
		\cup
		\gr'
		\ve_2		
		\cup
		\gr'
		\ve_1
		\cup
		p\Zp^4
\quad (disjoint)
.	
\\
\\
		 \Z_p^4
		&=&
		\gr^{A}
		\ve_3
		\cup
		\gr^{A}
		\ve_1
		\cup
		p\Zp^4
\quad (disjoint)
.\\
}
\end{corollary}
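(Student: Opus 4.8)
The plan is to read both decompositions off Lemma~\ref{Caluculation:lemma:Decomposition} by exhibiting the right-hand lattices as a descending chain and invoking the elementary fact that a nested chain of sets partitions its largest member. First I would record the two chains of inclusions
\[
\Zp^4 \supset P_1\Zp^4 \supset P_3\Zp^4 \supset p\Zp^4,
\qquad
\Zp^4 \supset A\Zp^4 \supset p\Zp^4 .
\]
Each inclusion is immediate from the diagonal forms $P_1=\diag(1,1,p,1)$, $P_3=\diag(1,p,p,p)$, $A=\diag(1,1,p,p)$ and $pE=\diag(p,p,p,p)$: the ratio of two consecutive matrices, e.g. $P_1^{-1}P_3=\diag(1,p,1,p)$ or $P_3^{-1}(pE)=\diag(p,1,1,1)$, has entries in $\Zp$, so the smaller lattice is contained in the larger one.

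The key observation is purely set-theoretic: for any descending chain $L_0 \supset L_1 \supset \cdots \supset L_m$ of subsets one has the disjoint decomposition
\[
L_0 = (L_0 - L_1)\cup(L_1 - L_2)\cup\cdots\cup(L_{m-1}-L_m)\cup L_m,
\]
since every element of $L_0$ lies in exactly one layer, namely $L_{i}-L_{i+1}$ for the largest $i$ with $x\in L_i$ (or in $L_m$). Applying this to the first chain with $(L_0,L_1,L_2,L_3)=(\Zp^4,P_1\Zp^4,P_3\Zp^4,p\Zp^4)$ gives a disjoint union of $\Zp^4 - P_1\Zp^4$, $P_1\Zp^4 - P_3\Zp^4$, $P_3\Zp^4 - p\Zp^4$ and $p\Zp^4$; substituting identities (\ref{eq:grprime_e3}), (\ref{eq:grprime_e2}), (\ref{eq:grprime_e1}) of the lemma for the three differences yields the first assertion. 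The second assertion follows identically from the chain $\Zp^4 \supset A\Zp^4 \supset p\Zp^4$ together with identities (\ref{eq:grA_e3}) and (\ref{eq:grA_e1}).

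Since the substantive work, namely identifying each orbit $\gr'\ve_i$ and $\gr^{A}\ve_i$ with a set difference of lattices, has already been done in Lemma~\ref{Caluculation:lemma:Decomposition}, there is essentially no obstacle remaining here. The only thing that must be checked independently is the nesting of the lattices, and this is the one-line verification from the diagonal shapes noted above. I would therefore present the telescoping partition explicitly and cite the lemma for each piece, so that both the covering (union equals $\Zp^4$) and the disjointness are read off simultaneously from the chain structure.
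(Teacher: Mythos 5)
Your proof is correct and is precisely the argument the paper leaves implicit: its own proof of this corollary is simply ``Clear,'' since Lemma~\ref{Caluculation:lemma:Decomposition} already exhibits each orbit as a difference of consecutive terms in the nested chains $\Zp^4 \supset P_1\Zp^4 \supset P_3\Zp^4 \supset p\Zp^4$ and $\Zp^4 \supset A\Zp^4 \supset p\Zp^4$, so the telescoping partition you describe is exactly what makes it clear. Your explicit verification of the inclusions via the diagonal ratios is a sound way of spelling out that one-line check.
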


\begin{proof}
Clear.
\end{proof}
Next we give systems of representatives of $\gr \bs \gr A \gr$ in $\gr A \gr$ and $\gr \bs \gr B \gr$ in $\gr B \gr$, respectively.
It is an immediate consequence of
\cite[Proposition 3.35]{A3} and the following lemmas.
The details are left to the reader.
We denote the matrix
$\diag({p^\alpha},
	{p^{\beta}},
	{p^{k-\alpha}},
	{p^{k- \beta}})
$
by $C(\alpha,\beta,k)$ for
nonnegative integers $k$, $\alpha$, $\beta$, with
 $0 \leq \alpha \leq \beta \leq k - \beta$.

\begin{lemma}[{\cite[Chapter 6, Lemma 5.2]{K}}]
	Let $M$ be an element of $\mo$ with $v_p(\mu(M)) = 2$. 
	$M \in \gr B \gr$ if and only if $rk_p(M) = 1$,
	where $rk_p(M)$ means the rank $M$ over $\F_p$.
\end{lemma}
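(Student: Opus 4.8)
The plan is to classify all double cosets in $\gr \bs \mo / \gr$ whose multiplier has valuation $2$, and then to separate them by their mod-$p$ rank. By \cite[Proposition 3.35]{A3}, every such double coset admits a diagonal representative of the form $C(\alpha,\beta,2) = \diag(p^\alpha,p^\beta,p^{2-\alpha},p^{2-\beta})$ subject to $0 \le \alpha \le \beta \le 2-\beta$. The inequality $\beta \le 2-\beta$ forces $\beta \in \{0,1\}$, and then $\alpha \le \beta$ leaves only $(\alpha,\beta) \in \{(0,0),(0,1),(1,1)\}$; the corresponding representatives are $\diag(1,1,p^2,p^2)$, $B = \diag(1,p,p^2,p)$, and $\diag(p,p,p,p)$. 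First I would note that each indeed has multiplier of valuation $2$, since $p^\alpha p^{2-\alpha} = p^\beta p^{2-\beta} = p^2$, so these three double cosets exhaust those relevant to the statement.

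Next I would argue that $rk_p$ is a well-defined invariant on $\gr \bs \mo / \gr$. Every element of $\gr = GSp_4(\Z_p)$ lies in $GL_4(\Z_p)$ and hence reduces modulo $p$ to a matrix invertible over $\F_p$; since left and right multiplication by invertible matrices preserves rank, $rk_p(\gamma M \gamma') = rk_p(M)$ for all $\gamma,\gamma' \in \gr$, and therefore $rk_p$ descends to the set of double cosets. Reducing the three representatives modulo $p$ yields $\diag(1,1,0,0)$, $\diag(1,0,0,0)$, and $\diag(0,0,0,0)$, of ranks $2$, $1$, and $0$ respectively.

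Combining the two steps completes the proof: an element $M$ with $v_p(\mu(M)) = 2$ lies in exactly one of the three double cosets, and the rank invariant takes the three distinct values $2$, $1$, $0$ on them, so $M \in \gr B \gr$ holds if and only if $rk_p(M) = 1$. The only substantive ingredient is the enumeration of double cosets via the symplectic elementary divisor theorem of \cite[Proposition 3.35]{A3}; once that list is available the remainder is purely a rank computation, so I expect no serious obstacle beyond confirming that the admissibility constraints produce precisely these three cosets.
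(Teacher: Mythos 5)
Your proposal is correct and matches the paper's proof in substance: the paper disposes of the lemma as a straightforward application of the symplectic elementary divisors theorem (cf.\ \cite[Theorem 3.28]{A3}), which is exactly your enumeration of the three double cosets $C(\alpha,\beta,2)$ with $(\alpha,\beta)\in\{(0,0),(0,1),(1,1)\}$ combined with the observation that $rk_p$ is a double-coset invariant because $\gr \subset GL_4(\Z_p)$. You have simply written out the details the paper leaves implicit, and your rank computation ($2$, $1$, $0$ on the three representatives) correctly yields both directions of the equivalence.
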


\begin{proof}
It is a straightforward application of the ``symplectic divisors theorem''.(cf. \cite[Theorem 3.28]{A3}).
\end{proof}

\begin{lemma}
	For $D \in GL_n(\Zp)$, we define the set ${\cal B}(D)$ by
	\[
		{\cal B}(D)=\set{B \in M_n(\Zp) \ | \ ^tDB \ \text{is symmetric}}.
	\]
	Then ${\cal B}(DD') ={\cal B}(D)D'$ for each $D' \in GL_n(\Zp)$.
\end{lemma}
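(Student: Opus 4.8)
The plan is to exploit two elementary facts: that right-multiplication by $D'$ is a bijection of $M_n(\Zp)$ (since $D' \in GL_n(\Zp)$, its inverse $(D')^{-1}$ also lies in $M_n(\Zp)$), and that the congruence operation $S \mapsto {}^tD' S D'$ sends symmetric matrices to symmetric matrices and is reversible. Combining these reduces the asserted set equality to a biconditional that can be checked matrix by matrix.

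Concretely, first I would observe that, because right-multiplication by $D' \in GL_n(\Zp)$ permutes $M_n(\Zp)$, the identity ${\cal B}(DD') = {\cal B}(D)D'$ is equivalent to the statement that for every $B \in M_n(\Zp)$ one has $B \in {\cal B}(D)$ if and only if $BD' \in {\cal B}(DD')$. This is the natural way to turn an equality of sets into a pointwise condition, and it is exactly here that the invertibility of $D'$ is used.

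Next I would unwind the defining condition on $BD'$. By definition, $BD' \in {\cal B}(DD')$ means that ${}^t(DD')(BD')$ is symmetric; using ${}^t(DD') = {}^tD'\,{}^tD$ this matrix equals ${}^tD'\,({}^tD B)\,D'$. Writing $S = {}^tD B \in M_n(\Zp)$, the membership $B \in {\cal B}(D)$ is precisely the symmetry of $S$, while $BD' \in {\cal B}(DD')$ is precisely the symmetry of ${}^tD' S D'$. So everything comes down to comparing the symmetry of $S$ with that of its congruence transform.

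Finally, it remains to verify that $S$ is symmetric if and only if ${}^tD' S D'$ is. One direction is immediate from $({}^tD' S D')^t = {}^tD'\,{}^tS\,D'$; the reverse direction follows from the identity $S = {}^t((D')^{-1})\,({}^tD' S D')\,(D')^{-1}$, which is itself a congruence and hence preserves symmetry, the step that needs $D'$ invertible. I do not expect any genuine obstacle in this argument; it is a direct computation. The only point requiring a word of care is integrality, and this is automatic: since $(D')^{-1} \in M_n(\Zp)$, every matrix produced along the way remains in $M_n(\Zp)$, so the biconditional is being checked within the correct ambient set.
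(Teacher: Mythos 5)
Your proof is correct and is exactly the direct verification the paper has in mind: the paper's own proof of this lemma is simply ``Obvious,'' and your argument --- reducing the set equality to the pointwise equivalence $B \in {\cal B}(D) \Leftrightarrow BD' \in {\cal B}(DD')$ via ${}^t(DD')(BD') = {}^tD'\,({}^tD B)\,D'$ together with the fact that congruence by an invertible $D'$ preserves and reflects symmetry --- is precisely the computation it leaves implicit. There are no gaps, and your integrality remark is right, since $D' \in GL_n(\Zp)$ guarantees $(D')^{-1} \in M_n(\Zp)$ so that right-multiplication by $D'$ is a bijection of $M_n(\Zp)$.
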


\begin{proof}
Obvious.
\end{proof}

\begin{proposition}\label{Calculation:prop:ggAg}
We put
\eqna{
A_1 &=& 
	\pmat{
		p & 0 & 0 & 0 \\
		0 & p & 0 & 0 \\
		0 & 0 & 1 & 0 \\
		0 & 0 & 0 & 1 \\
	}
,\\
A_2 (d,x)&=& 
	\pmat{
		p & 0 & 0 & 0 \\
		-x & 1 & 0 & d \\
		0 & 0 & 1 & x \\
		0 & 0 & 0 & p \\
	}
\ \text{with} \ 0 \leq d,x < p
,\\
A_3\pmat{a & b \\b & d \\}&=& 
	\pmat{
		1 & 0 & a & b \\
		0 & 1 & b & d \\
		0 & 0 & p & 0 \\
		0 & 0 & 0 & p \\
	}
\ \text{with} \ 0 \leq a, b ,d <p,\\
A_4(d) &=& 
	\pmat{
		1 & 0 & d & 0 \\
		0 & p & 0 & 0 \\
		0 & 0 & p & 0 \\
		0 & 0 & 0 & 1 \\
	}
\ \text{with} \ 
	0 \leq d < p
.
}
Then the set of these matrices is a system of representatives of
$\gr \bs \gr A \gr$ in $\gr A \gr$.

\end{proposition}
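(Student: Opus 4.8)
The plan is to write the left cosets of $\gr$ in $\gr A\gr$ in block upper-triangular form and then to recognise $A_1,\dots,A_4$ among the resulting normal forms. By \cite[Proposition 3.35]{A3}, every coset in $\gr\bs\gr A\gr$ has a unique representative of the shape
\[
\pmat{p\,{}^{t}D^{-1} & B \\ O & D},
\]
where $D$ runs through a fixed system of representatives of $GL_2(\Zp)\bs\set{D'\in M_2(\Zp)\mid p\,{}^{t}D'^{-1}\in M_2(\Zp)}$ and, for each such $D$, the off-diagonal block $B$ runs through representatives of ${\cal B}(D)$ reduced modulo the lattice encoding the left-$\gr$-coset relation. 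Before enumerating I would record one simplification: since $v_p(\mu(A))=1$, the symplectic elementary divisor theorem shows that $\gr A\gr$ is the only double coset of multiplier $p$, so any integral symplectic matrix of multiplier $p$ lies automatically in $\gr A\gr$. This frees me from ever having to check double-coset membership of a candidate matrix by hand.

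First I would enumerate the admissible $D$ up to left $GL_2(\Zp)$-equivalence; since $p\,{}^{t}D^{-1}$ must be integral, the elementary divisors of $D$ are each a divisor of $p$, so $D$ falls into exactly one of the divisor types $(1,1)$, $(1,p)$, $(p,p)$. For $D=E$ one has ${\cal B}(E)=$ integral symmetric matrices, but the whole of this lattice collapses to a single left coset, so $B$ may be taken to be $O$ and one gets the single matrix $A_1$. For $D=pE$ the admissible $B$ is again symmetric but now only free modulo $p$, which produces exactly the $p^{3}$ matrices $A_3\pmat{a & b \\ b & d}$ with $0\le a,b,d<p$. The type $(1,p)$ is the substantive case: its $p+1$ left cosets are represented by $D=\diag(p,1)$ together with the $p$ matrices $\pmat{1 & x \\ 0 & p}$ with $0\le x<p$; the first gives the family $A_4(d)$ and the remaining $p$ give $A_2(d,x)$. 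Here the identity ${\cal B}(DD')={\cal B}(D)D'$ of the preceding lemma is exactly what transports the symmetry computation from $\diag(p,1)$ to the skew representatives, and in each case the surviving freedom is a single residue $d$ modulo $p$, giving $p$ and $p^{2}$ matrices respectively.

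Finally I would assemble the bookkeeping. The representatives displayed above are unique by \cite[Proposition 3.35]{A3}, so the four families consist of pairwise distinct left cosets; their cardinalities sum to $1+p+p^{2}+p^{3}=\deg T(A)$, which forces them to exhaust $\gr\bs\gr A\gr$. I expect the main obstacle to be not any single computation but the correct identification, for each normalised $D$, of the lattice modulo which $B$ must be reduced: as the contrast between $D=E$ and $D=pE$ already shows, two different $D$ may share the same ${\cal B}(D)$ yet contribute very different numbers of cosets. Making the stated ranges $0\le d,x<p$ and the precise shapes of $B$ emerge, and splitting the type $(1,p)$ cleanly into the one representative yielding $A_4$ and the $p$ skew representatives yielding $A_2$, is where the lemma on ${\cal B}(D)$ does its work and where the transpose and block conventions of \cite[Proposition 3.35]{A3} must be tracked with care. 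The $p$-rank criterion of the other preliminary lemma is not needed for the present decomposition; it enters instead for the companion description of $\gr B\gr$.
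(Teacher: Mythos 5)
Your proposal is correct and takes essentially the same route as the paper, which proves the proposition simply by invoking \cite[Proposition 3.35]{A3} together with the lemma $\mathcal{B}(DD')=\mathcal{B}(D)D'$ and leaving the details to the reader; your enumeration of the elementary divisor types $(1,1)$, $(1,p)$, $(p,p)$, the reduction of $B$ modulo $\{SD : S \text{ symmetric}\}$ within each type, and the check $1+p+p^2+p^3=\deg T(A)$ are exactly those omitted details. You also correctly observe that the rank criterion (Krieg's lemma) is needed only for the companion result on $\gr B \gr$, matching the paper's usage.
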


\begin{corollary}\label{Calculation:cor:ggAg}
Let $\alpha,\beta,k$ be nonnegative integers with $0 \leq \alpha \leq \beta \leq k- \beta$ and $k >0$, then the set of the following matrices is a system of representatives of $\gr \bs {\mathcal X}\left(A,C(\alpha,\beta,k)\right)$ in each case.
\begin{enumerate}
\item If 
	$\alpha = 0,\beta=0$,
	\[A_1.\]
\item If
	$\alpha = 0,\beta \geq 1$,
	\eqna{ 
	A_1, & & \\ 
	A_2(d,0) & & ( 0 \leq d < p).
	}
\item If
	$\alpha \geq 1$,
	all matrices in Lemma $\ref{Calculation:prop:ggAg}$.
	
\end{enumerate}

\end{corollary}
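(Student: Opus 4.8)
The plan is to read off the answer from Proposition~\ref{Calculation:prop:ggAg}, which already supplies a complete system of representatives $\{A_1, A_2(d,x), A_3\pmat{a & b \\ b & d}, A_4(d)\}$ for $\gr \bs \gr A \gr$, and to decide which of these cosets lie in $\mathcal{X}(A, C(\alpha,\beta,k))$. The first step is to note that $\mathcal{X}(A,C)$ is stable under left multiplication by $\gr$: if an element $\xi$ satisfies $\xi C \in \mu(A)\mo$ and $\gamma \in \gr$, then $\gamma\xi \in \gr A \gr$ and $(\gamma\xi)C = \gamma(\xi C) \in \mu(A)\,\gamma\mo = \mu(A)\mo$, since $\gamma\mo = \mo$. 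Hence $\mathcal{X}(A,C)$ is a union of left $\gr$-cosets inside $\gr A \gr$, and a representative $\alpha$ from the list of Proposition~\ref{Calculation:prop:ggAg} represents a coset of $\gr \bs \mathcal{X}(A,C)$ precisely when $\alpha C \in \mu(A)\mo$. The corollary therefore reduces to a single membership test applied to the four explicit families.

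Next I would make the test concrete. Here $\mu(A) = p$, and since $p^{-1}\alpha C$ automatically lies in $G$, the condition $\alpha C \in p\mo$ is equivalent to the entrywise requirement that every entry of $\alpha C(\alpha,\beta,k)$ be divisible by $p$. Right multiplication by $C(\alpha,\beta,k) = \diag(p^\alpha, p^\beta, p^{k-\alpha}, p^{k-\beta})$ multiplies the four columns by $p^\alpha, p^\beta, p^{k-\alpha}, p^{k-\beta}$, and the hypothesis $0 \le \alpha \le \beta \le k-\beta$ yields the ordering $\alpha \le \beta \le k-\beta \le k-\alpha$ with $k-\alpha, k-\beta \ge 1$. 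Consequently the third and fourth columns never obstruct $p$-divisibility, and the entire test is localized in the first two columns, controlled by $p^\alpha$ and $p^\beta$.

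It then remains to run the four checks, which are routine. For $A_1 = \diag(p,p,1,1)$ the product is diagonal with all exponents $\ge 1$, so $A_1$ always survives. For $A_2(d,x)$ the only relevant entries lie in its second row, namely $-x\,p^\alpha$ in column $1$ and $p^\beta$ in column $2$; these force $\beta \ge 1$ and, when $\alpha = 0$, additionally $x \equiv 0 \pmod p$ (that is, $x = 0$), whereas for $\alpha \ge 1$ every $x$ is permitted. For $A_3$ and $A_4$ the top-left entry equals $p^\alpha$, so these survive exactly when $\alpha \ge 1$, in which case the ordering guarantees the remaining entries are divisible by $p$ for all admissible $a,b,d$. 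Translating these conditions back against $0 \le a,b,d,x < p$ produces precisely the three cases asserted: only $A_1$ when $\alpha = \beta = 0$; $A_1$ together with the matrices $A_2(d,0)$ when $\alpha = 0$ and $\beta \ge 1$; and the full list of Proposition~\ref{Calculation:prop:ggAg} when $\alpha \ge 1$. The one genuinely conceptual point is the descent argument of the first paragraph, establishing that membership in $\mathcal{X}(A,C)$ is a property of the whole coset; after that, the remaining column-scaling computations are entirely mechanical.
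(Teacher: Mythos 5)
Your proposal is correct and is precisely the routine verification the paper suppresses with its one-word proof ``Clear.'': since ${\mathcal X}\left(A,C(\alpha,\beta,k)\right)$ is a union of left $\gr$-cosets of $\gr A \gr$, one simply tests each representative from Proposition~\ref{Calculation:prop:ggAg} for the condition $\alpha C \in p\mo$, which (as you note, because $p^{-1}\alpha C \in G$ automatically) reduces to $p$-divisibility of the first two columns after scaling by $p^{\alpha}$ and $p^{\beta}$, the last two columns being harmless since $k-\beta \geq 1$. Your case-by-case checks ($A_1$ always; $A_2(d,x)$ iff $\beta \geq 1$ and $x=0$ when $\alpha=0$; $A_3$, $A_4$ iff $\alpha \geq 1$, using $\beta \geq \alpha$) are all accurate and yield exactly the three cases of the corollary.
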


\begin{proof}
Clear.

\end{proof}
}

{
\begin{proposition}\label{Calculation:prop:ggBg}
We put
\eqna{
B_1(x) &= &
	\pmat{
		p^2 & 0 & 0 & 0 \\
		0 & p & 0 & 0 \\
		0 & 0 & 1 & 0 \\
		0 & 0 & 0 & p \\
	}
	\pmat{
		1 & 0 & 0 & 0 \\
		-x & 1 & 0 & 0 \\
		0 & 0 & 1 & x \\
		0 & 0 & 0 & 1 \\
	}
\ \text{with} \ 0 \leq x < p
,\\
B_2 &= &
	\pmat{
		p & 0 & 0 & 0 \\
		0 & p^2 & 0 & 0 \\
		0 & 0 & p & 0 \\
		0 & 0 & 0 & 1 \\
	}
,\\
B_3\pmat{a & b \\ b & d}&= &
	\pmat{
		p & 0 & a & b \\
		0 & p & b & d \\
		0 & 0 & p & 0 \\
		0 & 0 & 0 & p \\
	}
\ \text{with} \ 0 \leq a, b ,d <p, \ rk_p\pmat{a & b \\ b & d} = 1 
,\\
B_4(b,d,x)&= &
	\pmat{
		p & 0 & 0 & pb \\
		0 & 1 & b & d \\
		0 & 0 & p & 0 \\
		0 & 0 & 0 & p^2 \\
	}
	\pmat{
		1 & 0 & 0 & 0 \\
		-x & 1 & 0 & 0 \\
		0 & 0 & 1 & x \\
		0 & 0 & 0 & 1 \\
	}
\ \text{with} \ 0 \leq d < p^2, \ 0 \leq b, x < p
,\\
B_5(a,b)&= &
	\pmat{
		1 & 0 & a & b \\
		0 & p & pb & 0 \\
		0 & 0 & p^2 & 0 \\
		0 & 0 & 0 & p \\
	}
\ \text{with} \ 0 \leq a < p^2, \ 0 \leq b < p
.\\
}Then the set of these matrices is a system of representatives of
$\gr \bs \gr B \gr$ in $\gr B \gr$.

\end{proposition}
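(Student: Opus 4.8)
The plan is to establish that the listed matrices $B_1(x),\dots,B_5(a,b)$ form a complete, non-redundant system of representatives of $\gr \bs \gr B \gr$ in $\gr B \gr$, where $B = \diag(1,p,p^2,p)$. Since $v_p(\mu(B)) = 2$ and $rk_p(B) = 1$, every representative must itself be an element $M \in \mo$ with $v_p(\mu(M)) = 2$ and $rk_p(M) = 1$, by the first lemma above. First I would invoke \cite[Proposition 3.35]{A3} together with the ``symplectic divisors theorem'' (the second lemma) to fix the elementary-divisor shape of matrices in $\gr B \gr$, and to reduce the problem to a parametrization of the left cosets $\gr \bs \gr B \gr$ by integral data modulo suitable powers of $p$. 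The auxiliary lemma on ${\cal B}(D)$ is exactly the tool that governs the admissible symmetric-block entries ($a,b,d$ with the symmetry and rank conditions), so I would use it to pin down which upper-triangular completions give genuinely distinct cosets.

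The core computation is a counting argument: I would verify that the total number of listed matrices equals $\deg T(B) = \left| \gr \bs \gr B \gr \right|$, and that no two listed matrices lie in the same left $\gr$-coset. The degree $\deg T(B)$ is computed from the standard formula for $GSp_4$ Hecke operators (available via \cite{A3}), and the five families contribute cardinalities $p$ (for $B_1$), $1$ (for $B_2$), $\#\{rk_p = 1 \text{ symmetric}\} = p^2 + p - 1$ or similar (for $B_3$), $p^4$ (for $B_4$, from $0 \le d < p^2$, $0 \le b,x < p$), and $p^3$ (for $B_5$, from $0 \le a < p^2$, $0 \le b < p$). I would check these sum to the expected degree. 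For the disjointness, I would reduce each candidate coincidence $\gr M = \gr M'$ to an integrality condition $M' M^{-1} \in \gr$ and read off that the parameters must coincide in their respective ranges modulo $p$ or $p^2$.

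The main obstacle I anticipate is the bookkeeping for the mixed families $B_4(b,d,x)$ and $B_5(a,b)$, where the symplectic constraint interacts nontrivially with the off-diagonal parameters: one must confirm both that each listed matrix genuinely lies in $\gr B \gr$ (i.e.\ has the correct elementary divisors \emph{and} is symplectic up to the multiplier) and that the chosen ranges for $(b,d,x)$ and $(a,b)$ are exhaustive and irredundant. The factored presentation $B_1(x)$ and $B_4(b,d,x)$ as products with unipotent matrices is a hint that the natural approach is to first handle the diagonal (Smith/symplectic normal form) part and then enumerate the unipotent left-coset freedoms; I would organize the verification that way, treating the unipotent factor as ranging over a transversal of an appropriate congruence subgroup. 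Since the statement itself defers details to the reader, I would present the structural reduction and the degree-matching count, and indicate that the routine coset-separation checks follow the same pattern as in Corollary \ref{Calculation:cor:ggAg}.
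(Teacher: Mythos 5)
Your proposal takes essentially the same route as the paper, which gives no detailed argument at all: it simply declares the proposition an immediate consequence of \cite[Proposition 3.35]{A3} (block upper-triangular coset representatives parametrized by $GL_2(\Z_p)\backslash\{D\}$ and $B \in {\cal B}(D)$ modulo the symmetric translates), the rank criterion $rk_p(M)=1$ characterizing membership in $\gr B \gr$, and the ${\cal B}(DD')={\cal B}(D)D'$ lemma, leaving the details to the reader; your structural reduction follows exactly this skeleton, and your added degree-matching count is a legitimate (if logically redundant, since Proposition 3.35 already yields exhaustiveness) completeness check. One concrete slip in your count: the number of symmetric $2\times 2$ matrices over ${\mathbb F}_p$ of rank exactly one is $p^2-1$, not ``$p^2+p-1$ or similar'' (each such matrix is $c\,v\,{}^tv$ with $v\neq 0$, $c\neq 0$, and the fibers of $(c,v)\mapsto c\,v\,{}^tv$ have size $p-1$), so the families contribute $p + 1 + (p^2-1) + p^4 + p^3 = p(p+1)(p^2+1)$, which is the correct value of $\deg T(B)$; with your figure the sum would be $p^4+p^3+p^2+2p$ and the degree check would spuriously fail.
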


\begin{corollary}\label{Calculation:cor:ggBg}
Let $\alpha,\beta,k$ be nonnegative integers with $\alpha \leq \beta \leq k - \beta$ and $k \geq 3$,
then the set of the following matrices is a system of representatives of $\gr \bs  {\mathcal X}\left(B,C(\alpha,\beta,k)\right)$.
\begin{enumerate}
\item If 
	$\alpha = 0,\beta=0$,
	\[\emptyset.\]
\item If 
	$\alpha = 0,\beta \geq1$,
	\[B_1(0).\]
\item If
	$\alpha = 1,\beta =1$,
	\eqna{B_1(x)& &( 0 \leq x < p), \\B_2, \\
		B_3\pmat{a & b \\ b & d}  
		& & \left(0 \leq a, b ,d <p, \ rk_p\pmat{a & b \\ b & d} = 1\right).
	}
\item If
	$\alpha = 1, \beta \geq 2$,
	\eqna{B_1(x) & & ( 0 \leq x < p),\\ B_2, \\
		 B_3\pmat{a & b \\ b & d}  
			& & \left(0 \leq a, b ,d <p, \ rk_p\pmat{a & b \\ b & d} = 1\right),\\
		B_4(b,d,0)& & (0 \leq d < p^2, \ 0 \leq b< p).
		}
\item If
	$\alpha \geq 2$,
	all matrices in Lemma $\ref{Calculation:prop:ggBg}$.
	
\end{enumerate}

\end{corollary}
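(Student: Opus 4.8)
The plan is to obtain the required transversal by filtering the system of representatives of $\gr \bs \gr B \gr$ furnished by Proposition \ref{Calculation:prop:ggBg} through the condition defining $\mathcal{X}$. Here $C(\alpha,\beta,k)=\diag\left(p^\alpha,p^\beta,p^{k-\alpha},p^{k-\beta}\right)$ and $\mu(B)=p^2$, so by Lemma \ref{HeisenbergLiealgebras:lemma:tTAzerotTB},
\[
\mathcal{X}\left(B,C(\alpha,\beta,k)\right)=\left\{ M \in \gr B \gr \ \middle|\ M\,C(\alpha,\beta,k) \in p^2\mo \right\}.
\]
First I would observe that this condition is invariant under left multiplication by $\gr$: because $\gr \subseteq \mo$ and $g^{-1}\mo=\mo$ for every $g \in \gr$, one has $gM\,C(\alpha,\beta,k) \in p^2\mo$ if and only if $M\,C(\alpha,\beta,k) \in p^2\mo$. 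Thus $\mathcal{X}\left(B,C(\alpha,\beta,k)\right)$ is a union of left $\gr$-cosets, and a transversal of $\gr \bs \mathcal{X}\left(B,C(\alpha,\beta,k)\right)$ is obtained simply by keeping those representatives of Proposition \ref{Calculation:prop:ggBg} that satisfy $M\,C(\alpha,\beta,k) \in p^2\mo$.

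Next I would turn the membership $M\,C(\alpha,\beta,k) \in p^2\mo$ into explicit divisibilities on the columns of $M$. As $\mo = G \cap M_4(\Zp)$ and $M\,C(\alpha,\beta,k) \in G$ automatically, the condition is equivalent to $p^{-2}M\,C(\alpha,\beta,k) \in M_4(\Zp)$. Writing $M$ column-wise as $\left(M^{(1)}\mid M^{(2)}\mid M^{(3)}\mid M^{(4)}\right)$, right multiplication by $C(\alpha,\beta,k)$ scales these columns by $p^\alpha,p^\beta,p^{k-\alpha},p^{k-\beta}$, so the requirement reads
\[
M^{(1)}\in p^{2-\alpha}\Zp^4,\quad M^{(2)}\in p^{2-\beta}\Zp^4,\quad M^{(3)}\in p^{2-(k-\alpha)}\Zp^4,\quad M^{(4)}\in p^{2-(k-\beta)}\Zp^4.
\]
Under the standing hypotheses $0 \le \alpha \le \beta \le k-\beta$ and $k \ge 3$ one gets $k-\beta \ge 2$ and $k-\alpha \ge 2$, so the last two conditions are vacuous; only the first two columns, controlled by $\alpha$ and $\beta$ respectively, can impose constraints. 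Comparing $\alpha$ and $\beta$ with $2$ under $\alpha \le \beta$ then produces exactly the five regimes $(\alpha,\beta) \in \{(0,0),(0,\ge 1),(1,1),(1,\ge 2),(\ge 2,\ge 2)\}$ of the corollary.

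Finally I would carry out the finite, elementary check on the five families $B_1(x),B_2,B_3(\cdot),B_4(b,d,x),B_5(a,b)$ of Proposition \ref{Calculation:prop:ggBg}, reading off their first two columns and testing $M^{(1)}\in p^{2-\alpha}\Zp^4$ and $M^{(2)}\in p^{2-\beta}\Zp^4$ in each regime. For instance the first column of $B_1(x)$ is ${}^t\left(p^2,-px,0,0\right)$, which lies in $p^2\Zp^4$ only for $x=0$ but lies in $p\Zp^4$ for every $x$, explaining why $B_1(0)$ alone survives in case $2$ ($\alpha=0$) while all $B_1(x)$ survive in cases $3$ and $4$ ($\alpha=1$); the first column of $B_5(a,b)$ is ${}^t\left(1,0,0,0\right)$, so $B_5$ survives only once $\alpha \ge 2$; and the second column of $B_4(b,d,x)$ is ${}^t\left(0,1,0,0\right)$, so $B_4$ is excluded whenever $\beta \le 1$, whereas for $\beta \ge 2$ its first column ${}^t\left(p,-x,0,0\right)$ forces $x=0$. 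Collecting the survivors regime by regime yields the five cases. I expect the only real obstacle to be bookkeeping: one must compute the first two columns of all five families without error and match the resulting sub-conditions—the forced vanishing of $x$ for $B_1$ (in the $\alpha=0$ regime) and for $B_4$, the rank-one constraint carried along by $B_3$, and the threshold $\alpha,\beta \ge 2$ for $B_5$—against each regime; once the column-divisibility reformulation is in hand there is no conceptual difficulty.
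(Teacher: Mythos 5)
Your proposal is correct and is precisely the routine verification the paper omits (its proof of this corollary is just ``Obvious''): one filters the transversal of $\Gamma \backslash \Gamma B \Gamma$ from Proposition \ref{Calculation:prop:ggBg} by the left-$\Gamma$-invariant condition $MC(\alpha,\beta,k) \in p^2\Delta$, which under $k \geq 3$ reduces to divisibility conditions $M^{(1)} \in p^{2-\alpha}\Z_p^4$, $M^{(2)} \in p^{2-\beta}\Z_p^4$ on the first two columns only. Your column computations (forcing $x=0$ in $B_1$ when $\alpha = 0$ and in $B_4$ when $\alpha = 1$, excluding $B_4$ for $\beta \leq 1$ and $B_5$ for $\alpha \leq 1$) check out against all five regimes, so the argument is complete.
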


\begin{proof}
Obvious.

\end{proof}
}

Next we calculate $\tT{A,\vzero}\tT{p^{k+1}}^{\theta}$ and $\tT{B,\vzero}\tT{p^{k+2}}^{\theta^2}$. 
Let us introduce some notation. Put
\[{\cal C}_k = \set{\left. C(\alpha,\beta,k)\right| 0 \leq \alpha \leq \beta \leq k-\beta},\ 
{\cal C}_k^0 = \set{\left. C(0,\beta,k)\right| 1 \leq \beta \leq k - \beta}.\]
For each $D \in \mo$ and each finite subset ${\cal C}$ of  $\mo$, we put
\[S_D({\cal C}) = \sum_{C \in {\cal C}}\sum_{\va}\left|\gr\bs{\cal Y}(D,C,\va)\right| \tT{C,\va},\]
where $\va$ runs through a system of representatives of 
$\gr^{C} \bs \Zp^4 /C \Zp^4$ for each $C$.
The following lemmas are useful.

\begin{lemma}\label{Calculation:lemma:DisjointSum}
Let ${\cal N}$ and ${\cal N'}$ be two subsets of $\Z_p^4$, and let $C \in \mo$.
If \  $\gr^C{\mathcal N} + C\Z_p^4$ and $\gr^C{\mathcal N}' + C\Z_p^4$ are disjoint,
then
	 \[\tT{C,{\mathcal N} \cup {\mathcal N'}} 
		= \tT{C,{\mathcal N}} + \tT{C,{\mathcal N'}}.\]
Especially, if ${\cal N}$ is a subset of $p\Z_p^4$, ${\cal N'}$ is a subset of $\Z_p^4 - p\Z_p^4$,
and $C \in p\mo$, then the above formula holds. 
\end{lemma}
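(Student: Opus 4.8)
The plan is to prove both parts of Lemma~\ref{Calculation:lemma:DisjointSum} by reducing the additivity of $\tT{C,\,\cdot\,}$ over disjoint unions to the basic additivity of the operator $\tT{C,\mathcal N}$ established in Section~\ref{section:BasicTheory}. Recall from the discussion following Corollary~\ref{cor:FormulaTheta} that for a subset $\mathcal M$ of $\abgr = \Zp^4$ one has $\tT{C,\mathcal M} = \sum_{\mathfrak a}\tT{C,\mathfrak a}$, where $\mathfrak a$ ranges over $\gr^C \bs (\gr^C\mathcal M + C\abgr)/C\abgr$. This is the key structural fact: $\tT{C,\mathcal M}$ depends only on the image of $\mathcal M$ in the double-coset space $\gr^C \bs \abgr / C\abgr$, and it is a sum over the orbits that $\gr^C\mathcal M + C\abgr$ occupies.

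For the first assertion, I would argue as follows. The set $\gr^C(\mathcal N \cup \mathcal N') + C\Zp^4$ equals $(\gr^C\mathcal N + C\Zp^4) \cup (\gr^C\mathcal N' + C\Zp^4)$, since $\gr^C$ acts linearly and the union commutes with the orbit closure and translation by $C\Zp^4$. By hypothesis these two sets are disjoint, so the collection of $\gr^C$-orbits in $\gr^C(\mathcal N\cup\mathcal N') + C\Zp^4$ splits as the disjoint union of the orbits in $\gr^C\mathcal N + C\Zp^4$ and those in $\gr^C\mathcal N' + C\Zp^4$. Summing the corresponding $\tT{C,\mathfrak a}$ over this disjoint index set then yields
\[
\tT{C,\mathcal N \cup \mathcal N'} = \tT{C,\mathcal N} + \tT{C,\mathcal N'},
\]
which is exactly the claimed identity.

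For the second (``especially'') assertion, the point is to verify the disjointness hypothesis automatically from $\mathcal N \subset p\Zp^4$, $\mathcal N' \subset \Zp^4 - p\Zp^4$, and $C \in p\mo$. Writing $C = pC'$ with $C' \in \mo$, we get $C\Zp^4 = pC'\Zp^4 \subset p\Zp^4$. Since $\gr^C \subset \gr \subset GL_4(\Zp)$ preserves $p\Zp^4$, it follows that $\gr^C\mathcal N + C\Zp^4 \subset p\Zp^4$. On the other hand, any element of $\gr^C\mathcal N' + C\Zp^4$ is of the form $X\vv + C\vw$ with $X \in \gr^C$, $\vv \in \mathcal N' \subset \Zp^4 - p\Zp^4$, and $C\vw \in p\Zp^4$; since $X$ is invertible over $\Zp$ it preserves $\Zp^4 - p\Zp^4$, so $X\vv \notin p\Zp^4$, whence $X\vv + C\vw \notin p\Zp^4$. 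Therefore $\gr^C\mathcal N' + C\Zp^4$ is disjoint from $p\Zp^4$ and in particular from $\gr^C\mathcal N + C\Zp^4$, so the first part applies.

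The main obstacle, such as it is, lies in the first assertion: one must be careful that passing from $\mathcal N \cup \mathcal N'$ to the orbit space $\gr^C \bs (\gr^C(\mathcal N\cup\mathcal N') + C\Zp^4)/C\Zp^4$ genuinely partitions the orbits, rather than merely covering them. The disjointness hypothesis is precisely what rules out an orbit that meets both $\gr^C\mathcal N + C\Zp^4$ and $\gr^C\mathcal N' + C\Zp^4$ and would otherwise be double-counted; once that is secured the remainder is a routine bookkeeping of the defining sum, and the second assertion is a direct verification using $C \in p\mo$ together with the fact that $\gr^C$ preserves both $p\Zp^4$ and its complement $\Zp^4 - p\Zp^4$.
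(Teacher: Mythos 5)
Your proof is correct and follows exactly the route the paper has in mind: the paper dismisses this lemma with ``Clear,'' the intended justification being precisely the decomposition $\tT(C,{\mathcal N})=\sum_{\mathfrak a}\tT(C,{\mathfrak a})$ over ${\mathfrak a}\in\gr^C\bs(\gr^C{\mathcal N}+C\Zp^4)/C\Zp^4$ from Section~\ref{section:BasicTheory}, with disjointness of the saturated sets splitting the index set, and the ``especially'' clause checked just as you do from $C\in p\mo$ and the fact that the (unit-scaled) $\gr^C$-action preserves both $p\Zp^4$ and its complement. Your care in noting that the disjointness hypothesis is what prevents an orbit from meeting both pieces is exactly the content the paper treats as evident, so nothing is missing.
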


\begin{proof}
Clear.
\end{proof}

\begin{lemma}\label{lemma:Thetheta}
For each $k \geq 0$, the following identities hold:
\begin{eqnarray}
%
\label{lemma:Thetheta:eq1}
\tT{p{\cal C}_{k}, \Zp^4}^\theta &=& p^4\tep \tT{p^k},\\
\label{lemma:Thetheta:eq2}
\tT{pA^{k+2}, A\Zp^4}^\theta
&=& p^2\tT{pA^{k+2}, p\Zp^4}^{\theta},\\
\label{lemma:Thetheta:eq3}
\tT{{\cal C}_{k+4}^0, P_1\Zp^4}^{\theta} &=& p^3\tT{{\cal C}_{k+4}^0, B\Zp^4},
\\
\label{lemma:Thetheta:eq4}
\tT{p{\cal C}_{k+2}^0, P_1\Zp^4}^{\theta} &=& p^4\tep\tT{{\cal C}_{k+4}^0, P_1\Zp^4},\\
\tT{p^{k+2}}^\theta &=& p^2\tT{A^{k+2}, p\Zp^4} + p^3\tT{{\cal C}_{k+2}^0, p\Zp^4}\nonumber\\
\label{lemma:Thetheta:eq5}
&+& p^4\tT{p{\cal C}_{k}, p\Zp^4}.
\end{eqnarray}
\end{lemma}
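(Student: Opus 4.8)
All five identities are run through a single device, Proposition~\ref{Thetheta:tT(A)theta}. Since here $C_0=pE$ one has $C_0^{-1}\va C_0=p\va$, so that proposition reads
\[
\tT{C,\mathcal N}^\theta=\frac{\left|\mathcal N_C\right|}{\left|(p\mathcal N)_C\right|}\,\tT{C,p\mathcal N}
\]
for every $\gr^C$-stable lattice $\mathcal N$ with $\mathcal N+C\Zp^4=\mathcal N$, where $\mathcal N_C$ denotes the image of $\mathcal N$ in $\Zp^4/C\Zp^4$. I would combine this with two further inputs used throughout: the relation $\tep\,\tT{C,\va}=\tT{pC,p\va}$ from Lemma~\ref{BasicTheory:RightAction}, and the symplectic elementary-divisor parametrization of $\gr\bs\{A:v_p(\mu(A))=k\}/\gr$ by $\mathcal C_k$, which gives $\tT{\mathcal C_k,\Zp^4}=\tT{p^k}$. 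Because $\mu(C)$ is a large power of $p$, the modulus $C\abgr+\abgr C$ controlling the classes $\tT{C,\cdot}$ collapses to $C\Zp^4$ by Assumption~\ref{assumption:s}; this is what lets me replace one target lattice by another whenever the two agree modulo $C\Zp^4$.

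First I would establish (\ref{lemma:Thetheta:eq1}) and (\ref{lemma:Thetheta:eq5}), which only involve $\mathcal N=\Zp^4$. For (\ref{lemma:Thetheta:eq1}) the displayed formula applied to each $pC$ with $C\in\mathcal C_k$ has index ratio $p^4$ uniformly, after which $p^4\,\tT{pC,p\Zp^4}=p^4\,\tep\,\tT{C,\Zp^4}$, and summation over $\mathcal C_k$ together with $\tT{\mathcal C_k,\Zp^4}=\tT{p^k}$ finishes it. For (\ref{lemma:Thetheta:eq5}) I would split $\tT{p^{k+2}}=\tT{\mathcal C_{k+2},\Zp^4}$ along $\mathcal C_{k+2}=\{A^{k+2}\}\sqcup\mathcal C_{k+2}^0\sqcup p\mathcal C_k$, the three blocks being the shapes $\alpha=\beta=0$, $\alpha=0<\beta$, and $\alpha\ge1$ (where $C(\alpha,\beta,k+2)=pC(\alpha-1,\beta-1,k)$). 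Applying the formula with $\mathcal N=\Zp^4$ to each block, the only thing that varies is how much of $C\Zp^4$ already lies in $p\Zp^4$, so the ratio comes out $p^2$, $p^3$, $p^4$ respectively, yielding the three terms of (\ref{lemma:Thetheta:eq5}).

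The remaining three identities run the same engine with $\mathcal N\in\{A\Zp^4,P_1\Zp^4\}$, the extra step being to bring the target $p\mathcal N$ into the stated shape. For (\ref{lemma:Thetheta:eq2}) I would apply the formula to both sides: $A\Zp^4\mapsto pA\Zp^4$ with ratio $p^4$ and $p\Zp^4\mapsto p^2\Zp^4$ with ratio $p^2$; since $pA\Zp^4$ and $p^2\Zp^4$ have equal image modulo $(pA^{k+2})\Zp^4$, the two targets coincide and the claim reduces to $p^4=p^2\cdot p^2$. For (\ref{lemma:Thetheta:eq3}) the formula sends $P_1\Zp^4$ to $pP_1\Zp^4$ with ratio $p^3$ for every $C\in\mathcal C_{k+4}^0$, and a coordinatewise valuation check gives $pP_1\Zp^4\equiv B\Zp^4\pmod{C\Zp^4}$, whence $\tT{C,pP_1\Zp^4}=\tT{C,B\Zp^4}$. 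Identity (\ref{lemma:Thetheta:eq4}) is the $\tep$-twisted companion of this: writing $p\mathcal C_{k+2}^0=\{C(1,1+\beta,k+4)\}$, the formula turns the left side into $p^4\sum_\beta\tT{C(1,1+\beta,k+4),pP_1\Zp^4}$, and the relation $\tep\,\tT{C(0,\beta,k+2),P_1\Zp^4}=\tT{C(1,1+\beta,k+4),pP_1\Zp^4}$ reproduces exactly the right-hand terms (matching $\mu$-degrees, both sides being of degree $k+4$, pins down this normalization). Throughout, the required $\gr^C$-stability of $A\Zp^4$, $P_1\Zp^4$ and the attendant orbit counts are furnished by Lemma~\ref{Caluculation:lemma:Decomposition} and Corollary~\ref{Caluculation:cor:Decomposition}, which present $\Zp^4$ as an explicit disjoint union of $\gr'$- (resp.\ $\gr^A$-) orbits.

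The principal obstacle is uniformity rather than any individual identity: one must check the hypotheses of Proposition~\ref{Thetheta:tT(A)theta}---genuine $\gr^C$-stability of the auxiliary lattices and $\mathcal N+C\Zp^4=\mathcal N$---simultaneously over all admissible $(\alpha,\beta,k)$, and then carry out the index arithmetic without case errors. The decompositions of Corollary~\ref{Caluculation:cor:Decomposition} are precisely what make the recurring step ``replace $p\mathcal N$ by the stated lattice modulo $C\Zp^4$'' rigorous, so I would establish those orbit decompositions first and thereby reduce every index computation to reading off $p$-adic valuations coordinate by coordinate.
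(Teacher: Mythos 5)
Your proposal is correct and takes essentially the same route as the paper, whose entire proof is a one-line citation of Proposition~\ref{BasicTheory:DoubleCoset}, Proposition~\ref{Thetheta:tT(A)theta}, and the inclusions $\gr^{A^l}\subset\gr^{A}$ ($l\geq1$) and $\gr^{C}\subset\gr^{P_1}$ ($C\in{\cal C}^0_l$, $l\geq2$); your index-ratio computations (ratios $p^2,p^3,p^4$ according to how many diagonal valuations of $C$ are positive, together with the replacements $pA\Zp^4\equiv p^2\Zp^4$ and $pP_1\Zp^4\equiv B\Zp^4$ modulo $C\Zp^4$) merely make that citation explicit, and they all check out. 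One remark: the identity you actually prove in place of~(\ref{lemma:Thetheta:eq4}), namely $\tilde{T}\left(p{\cal C}^0_{k+2},P_1\Zp^4\right)^{\theta}=p^4\,{\langle p\rangle}^{s}\,\tilde{T}\left({\cal C}^0_{k+2},P_1\Zp^4\right)$, is the right one---the printed subscript ${\cal C}^0_{k+4}$ is an index typo, since the left-hand side has $\mu$-degree $k+4$ while the printed right-hand side would have degree $k+6$, and it is your degree-consistent version that the derivation of Corollary~\ref{cor:maincor} in fact uses---so your ``matching $\mu$-degrees pins down the normalization'' parenthesis is exactly the needed justification.
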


\begin{proof}
They are immediate consequences of Proposition \ref{BasicTheory:DoubleCoset}, Proposition \ref{Thetheta:tT(A)theta}, 
and the facts that
$\gr^{A^l} \subset \gr^{A}$ for each $l \geq 1$ and that $\gr^{C} \subset \gr^{P_1}$
for each $C \in {\cal C}_{l}^0$ with $l \geq 2$.

\end{proof}

\begin{proposition}\label{prop:TATpnTheta}
For each nonnegative integer $k$, $\tT{A,\vzero} \tT{p^{k+1}}^\theta$ equals
\eqna{
\tT{{\cal C}_{k+2}^0, P_1\Zp^4}^{\theta} 
+p^4(1+p)\tep\tT{p^{k}}
+p^3\tep\tT{p^{k}}^\theta+
 \frac{1}{p^2}\tT{p^{k+2}}^{\theta^2}.
}
\end{proposition}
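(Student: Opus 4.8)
The plan is to compute the product $\tT{A,\vzero}\tT{p^{k+1}}^\theta$ directly by first applying the $\theta$-action formula and then the product formula of Lemma \ref{HeisenbergLiealgebras:lemma:tTAzerotTB}, and to reorganize the resulting sum using the decomposition lemmas. First I would expand $\tT{p^{k+1}}^\theta$ by means of Equation (\ref{lemma:Thetheta:eq5}) of Lemma \ref{lemma:Thetheta} (applied with $k+1$ in place of $k+2$, i.e.\ using the analogous decomposition), writing it as a combination of terms $\tT{A^{k+1},p\Zp^4}$, $\tT{{\cal C}_{k+1}^0,p\Zp^4}$ and $\tT{p{\cal C}_{k-1},p\Zp^4}$ with powers of $p$ and $\tep$ as coefficients. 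The point of working with $\theta$ applied once is that each summand $\tT{C,\va}$ occurring has its vector part $\va$ pushed into $p\Zp^4$, which controls which $\calY(A,C,\va)$ are nonempty.

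The central computation is then to evaluate $\tT{A,\vzero}\tT{C,\va}$ for each representative $(C,\va)$ appearing, via Lemma \ref{HeisenbergLiealgebras:lemma:tTAzerotTB}. For this I would fix $C = C(\alpha,\beta,m)$ and use Corollary \ref{Calculation:cor:ggAg} to obtain an explicit system of representatives of $\gr\bs\calX(A,C)$, splitting into the three cases $\alpha=\beta=0$, $\alpha=0,\beta\ge1$, and $\alpha\ge1$. For each representative $\alpha_i$ (one of $A_1$, $A_2(d,x)$, $A_3(\cdots)$, $A_4(d)$) I would determine the condition $\alpha_i\va\in\mu(A)\abgr=p\Zp^4$ defining $\calY$, and thereby read off $\card{\gr\bs\calY(A,C,\va)}$. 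The products naturally regroup into sums over $\calC_{k+2}^0$ and over $p\calC_k$, and the multiplier shift $C\mapsto AC$ raises $v_p(\mu)$ by one, which is what converts $\tT{p^{k+1}}^\theta$-data into $\tT{p^{k+2}}$-data. I would then use Proposition \ref{Thetheta:tT(A)theta} in reverse to recognize the collected sums as $\tT{{\cal C}_{k+2}^0,P_1\Zp^4}^\theta$, as $\tep\tT{p^k}$ and $\tep\tT{p^k}^\theta$ (via Equations (\ref{lemma:Thetheta:eq1}) and (\ref{lemma:Thetheta:eq3})--(\ref{lemma:Thetheta:eq4})), and as $\frac{1}{p^2}\tT{p^{k+2}}^{\theta^2}$ (via a double application of (\ref{lemma:Thetheta:eq5})).

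I expect the main obstacle to be the bookkeeping of the vector parts: the quantities $\card{\gr\bs\calY(A,C,\va)}$ depend delicately on the residue class of $\va$ in $\gr^C\bs\Zp^4/C\Zp^4$, and I must verify that the coefficients produced by the explicit representatives $A_2(d,x)$, $A_3$, $A_4(d)$ agree exactly with those predicted by the $\theta$-formula of Proposition \ref{Thetheta:tT(A)theta}, so that the regrouping is clean. The decomposition identities of Corollary \ref{Caluculation:cor:Decomposition}, together with Lemma \ref{Calculation:lemma:DisjointSum}, are what make this tractable: they let me partition $\Zp^4$ into the disjoint orbit-pieces $\gr^A\ve_3$, $\gr^A\ve_1$, $p\Zp^4$ (and the finer $\gr'$-decomposition), so that each $\tT{C,{\cal N}}$ splits additively and I can match orbit pieces on both sides. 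The final step is purely to collect the four flavors of terms and confirm that the $p$-power weights $p^4(1+p)$, $p^3$, $1/p^2$ come out as stated, which should follow once the orbit counts are in hand.
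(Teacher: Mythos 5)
Your plan inverts the order of operations relative to the paper's proof, and the inversion creates a genuine gap. You propose to expand $\tilde{T}(p^{k+1})^{\theta}$ first, via \eqref{lemma:Thetheta:eq5}, and then evaluate the individual products $\tilde{T}(A,\vzero)\,\tilde{T}(C,\va)$ ``via Lemma \ref{HeisenbergLiealgebras:lemma:tTAzerotTB}.'' But that lemma does not compute such products: it computes only the product of $\tilde{T}(A,\vzero)$ against the aggregate $\tilde{T}(p^{m})$, summed over \emph{all} double cosets with $v_p(\mu)=m$ and all vector parts. The aggregation is precisely what collapses the double-coset membership condition $\hat{\alpha}^{-1}C \in \gr B \gr$ into the simple integrality condition $\alpha C \in \mu(A)\mo$ defining ${\cal X}(A,C)$, and Corollary \ref{Calculation:cor:ggAg} enumerates representatives for that condition only. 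Once you fix an individual family such as $\tilde{T}({\cal C}^{0}_{k+1},p\Zp^4)$, the coefficient of $\tilde{T}(D,\vd)$ in $\tilde{T}(A,\vzero)\,\tilde{T}(C,\va)$ requires deciding, for each representative $\hat{\alpha}_i$, the precise double coset (the symplectic elementary divisors) of $\hat{\alpha}_i^{-1}D$, not merely its integrality --- a finer count that no lemma in the paper supplies and that your proposal defers to ``bookkeeping'' without providing. There is also a small edge issue: \eqref{lemma:Thetheta:eq5} governs exponents $\geq 2$, so your substitution of $k+1$ for $k+2$ needs a separate statement for $\tilde{T}(p)^{\theta}$ when $k=0$, with $p{\cal C}_{k-1}$ read as empty.

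The missing idea is the observation the paper exploits: $\tilde{T}(A,\vzero) = T(A)^{s}$ is fixed by $\theta$ (Corollary \ref{cor:FormulaTheta}) and $\theta$ is a ring homomorphism, hence $\tilde{T}(A,\vzero)\,\tilde{T}(p^{k+1})^{\theta} = \bigl(\tilde{T}(A,\vzero)\,\tilde{T}(p^{k+1})\bigr)^{\theta}$. One therefore computes the \emph{untwisted} product $\tilde{T}(A,\vzero)\,\tilde{T}(p^{k+1})$ by Lemma \ref{HeisenbergLiealgebras:lemma:tTAzerotTB} --- splitting into the three cases of Corollary \ref{Calculation:cor:ggAg} and counting $\lvert \gr \bs {\cal Y} \rvert$ on the orbit pieces of Corollary \ref{Caluculation:cor:Decomposition}, which is exactly the counting you describe --- and only at the end applies $\theta$ to the whole identity, using \eqref{lemma:Thetheta:eq1} and \eqref{lemma:Thetheta:eq5} to recognize the terms $p^{4}(1+p)\langle p\rangle^{s}\tilde{T}(p^{k})$, $p^{3}\langle p\rangle^{s}\tilde{T}(p^{k})^{\theta}$ and $p^{-2}\tilde{T}(p^{k+2})^{\theta^{2}}$. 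With that reordering your outline becomes essentially the paper's proof; without it, the central multiplication step of your plan cannot be carried out with the tools you cite.
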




{
\newcommand{\tmp}{{\mathcal X}(\alpha,\beta)}
\newcommand{\tmpa}[1]{{\mathcal Y}\left(\alpha,\beta,#1\right)}
\newcommand{\tmpb}{{\mathcal N}}
\begin{proof}
By Lemma \ref{HeisenbergLiealgebras:lemma:tTAzerotTB}
\[\tT{A,\vzero}\tT{p^{k+1}}
= S_A(\{A^{k+2}\})
+ S_A({\cal C}_{k+2}^0)
+ S_A(p{\cal C}_{k})
.\]
Let us put 
\[\tmp = {\mathcal X}\left(A,C(\alpha,\beta,k+2)\right),\]
and 
\[\tmpa{\va} = {\mathcal Y}\left(A,C(\alpha,\beta,k+2), \va \right).\]	
We shall calculate $\left|\gr\bs\tmpa{\va}\right|$.
Notice that, for all $\vb \in \Z_p^4$, \[ \tmpa{\va}= \tmpa{\va + p\vb}.\]
The calculation will be divided into three cases.
\begin{enumerate}
\item If
	$\alpha = 0,\beta=0$, then Corollary \ref{Calculation:cor:ggAg} implies
	$\tmp 
		=
		\gr A_1.
	$
Hence it is easy to see that
	\[
	\left\{
	\mat{
		\left|\gr\bs\tmpa{\va}\right| = 1, & {\rm if} \ \va \in  A\Zp^4,
		\\
		\\
		\left|\gr\bs\tmpa{\va}\right| = 0, & {\rm otherwise}.
	}
	\right.
	\]

Hence it is easy to see that
	\eqna{
	S_A(\{A^{k+2}\})
	&=& \tT{A^{k+2}, A\Zp^4}.
}
By Proposition \ref{BasicTheory:DoubleCoset}, we have
\[\tT{A^{k+2}, A\Zp^4} 
= \tT{A^{k+2}, p\Zp^4}.
\]
Hence, we have
	\begin{equation}\label{equation:gAg1}
	S_A(\{A^{k+2}\})
	= \tT{A^{k+2}, p\Zp^4}.
	\end{equation}
\item If
	$\alpha = 0,\beta \geq 1$, then
	\[\tmp=\set{
            		\pmat{	p*	& * 	&* 	&* \\
            				p*	&*	&*	&* \\
            				p*	& * 	&* 	&* \\
            				p*	&*	&*	&* \\
            		}
		}
		\cap
		\gr A \gr,\]
	which is a right
	$\gr'	$
	invariant set. Hence  for all $Y \in \gr'$,
	\[\tmpa{\va}Y = \tmpa{Y^{-1} \va}.\]
	Thus, for all $Y \in \gr'$,
	\[\card{\gr \bs \tmpa{\va}} = \card{\gr \bs\tmpa{Y \va}}.\] 	
	By Corollary \ref{Caluculation:cor:Decomposition},
	we are reduced to the case where
	$
		\va \in \set{\ve_3, \ve_2,\ve_1,\vzero}.
	$
	By Corollary \ref{Calculation:cor:ggAg}, it is easy to see
	\[
	\begin{array}{ll}	
			\left| \gr \bs \tmpa{\ve_3}\right|  = 0,  
			&
			\left| \gr \bs \tmpa{\ve_2}\right|  = 1,\\
			\left| \gr \bs \tmpa{\ve_1}\right| = 1+p,
			&
			\left| \gr \bs \tmpa{\vzero}\right| = 1+p.
	\end{array}
	\]
	Thus, we have
	\eqna{
	S_A({\cal C}_{k+2}^0)
	&=&
	 \tT{{\cal C}_{k+2}^0, 
	 		\gr'
 \ve_2
            		\cup
            		\gr'
            		\ve_1
            		\cup
 			p\Zp^4
	 	} 
	+  p\tT{{\cal C}_{k+2}^0, 
			\gr'
            		\ve_1
            		\cup
			p\Zp^4
			}\\
	&=& 
	\tT{{\cal C}_{k+2}^0, P_1\Zp^4} 
	+  p\tT{{\cal C}_{k+2}^0, P_3\Zp^4}.
	}
By Proposition \ref{BasicTheory:DoubleCoset}, we have
\[\tT{{\cal C}_{k+2}^0, P_3\Zp^4} 
= \tT{{\cal C}_{k+2}^0, p\Zp^4}.\]
Hence, we have
	\begin{eqnarray}
S_A({\cal C}_{k+2}^0)	=
	\tT{{\cal C}_{k+2}^0, P_1\Zp^4} 
	+  p\tT{{\cal C}_{k+2}^0, p\Zp^4}.
	\end{eqnarray}
\item If 
	$\alpha \geq 1$, then
	$\tmp = \gr A \gr$	
	is naturally a right $\gr$ invariant set.
	Since 
	\[
	\mat{	
			\left| \gr \bs \tmpa{\ve_1}\right|  = 1+p,  
			&
			\left| \gr \bs \tmpa{\vzero}\right| = 1+p+p^2+p^3,  
	}
	\]
	we have
	\[
	S_A(p{\cal C}_{k})
		=
		(1+p)\tT{p{\cal C}_{k}, \Zp^4} + (p^3+p^2)\tT{p{\cal C}_{k}, p\Zp^4}.
	\]
By (\ref{lemma:Thetheta:eq1}) of Lemma \ref{lemma:Thetheta},
	\begin{eqnarray}
	\label{equation:gAg2}
	S_A(p{\cal C}_{k})^\theta
		&=& p^4(1+p)\tep\tT{p^k} + p^3\tep\tT{p^k}^\theta \nonumber \\
		&+& p^2\tT{p{\cal C}_{k}, p\Zp^4}^\theta.
	\end{eqnarray}
\end{enumerate}

Equality (\ref{lemma:Thetheta:eq5}) of Lemma \ref{lemma:Thetheta} and (\ref{equation:gAg1})$-$(\ref{equation:gAg2}) imply the desired identity.

\end{proof}
}

\begin{proposition}\label{prop:TBTpn}
For each nonnegative integer $k$,
\eqna{
\tT{B,\vzero}\tT{p^{k+2}}^{\theta^2} 
&=&
 \tT{{\cal C}_{k+4}^0, B\Zp^4}^{\theta^2}  + \tT{p{\cal C}_{k+2}^0, P_1\Zp^4}^{\theta^2} \\
&+& p^8 \tepp \tT{p^k} + p^7 \tepp \tT{p^k}^{\theta}\\
&+&(p^2 + p - 1) \tep \tT{p^{k+2}}^{\theta^2}.\\
&+& 	\tep \tT{p^{k+2}}^{\theta^3}\\ 
}
\end{proposition}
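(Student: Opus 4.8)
The plan is to mimic the structure of the proof of Proposition~\ref{prop:TATpnTheta}, replacing the pair $(A, \mathcal{C}_{k+2})$ with $(B, \mathcal{C}_{k+4})$ and working one ``level'' deeper in $\theta$. First I would apply Lemma~\ref{HeisenbergLiealgebras:lemma:tTAzerotTB} to decompose the product as
\[
\tT{B,\vzero}\tT{p^{k+2}} = S_B(\mathcal{C}_{k+4}^{\alpha=0,\beta=0}) + S_B(\mathcal{C}_{k+4}^0) + S_B(\{\text{terms with }\alpha=1\}) + S_B(p\,\mathcal{C}_{k+2}^0) + S_B(p^2\mathcal{C}_k),
\]
organizing the sum over $C = C(\alpha,\beta,k+4)$ according to the five cases of Corollary~\ref{Calculation:cor:ggBg}. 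The key computational input is that for each such $C$ I must evaluate $\bigl|\gr\bs\mathcal{Y}(B,C,\va)\bigr|$ as $\va$ ranges over representatives of $\gr^C\bs\Zp^4/C\Zp^4$, and here I would exploit the right-invariance of $\mathcal{X}(B,C)$ under $\gr'$ or under $\gr$ (depending on the case) exactly as in Proposition~\ref{prop:TATpnTheta}, together with the orbit decompositions of $\Zp^4$ supplied by Corollary~\ref{Caluculation:cor:Decomposition}. This reduces each multiplicity computation to the finitely many representatives $\ve_3,\ve_2,\ve_1,\vzero$, whose counts can be read off from the explicit representatives $B_1,\dots,B_5$ listed in Proposition~\ref{Calculation:prop:ggBg}.

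Next I would assemble the five pieces. The cases $\alpha=0,\beta\ge1$ and $\alpha=1$ should produce the ``new'' boundary terms $\tT{{\cal C}_{k+4}^0, B\Zp^4}$ and $\tT{p{\cal C}_{k+2}^0, P_1\Zp^4}$, after rewriting via Proposition~\ref{BasicTheory:DoubleCoset} (i.e.\ absorbing factors like $P_3\Zp^4 = p\Zp^4$ at the level of double cosets). The bottom case $\alpha\ge2$, where $\mathcal{X}(B,C)=\gr B\gr$ is fully right $\gr$-invariant, contributes the terms proportional to $\tepp\tT{p^k}$ and to $\tT{p^2\mathcal{C}_k,p\Zp^4}$; the multiplicities here are the degrees $\deg\tT(\ve_1)$-type counts $1+p$, $1+p+p^2+p^3$, etc. After collecting, I would apply $\theta^2$ to the whole identity (so that $S_B(p^2\mathcal{C}_k)^{\theta^2}$ unwinds using Lemma~\ref{lemma:Thetheta}, analogous to how (\ref{lemma:Thetheta:eq1}) was used to get (\ref{equation:gAg2})) and use Equality~(\ref{lemma:Thetheta:eq5}) to re-express the deepest layer back in terms of $\tT{p^{k+2}}^{\theta^2}$ and $\tT{p^{k+2}}^{\theta^3}$. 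Tracking the powers of $p$ through $\theta^2$ is where the coefficients $p^8$, $p^7$, $(p^2+p-1)$, and the final $\tep\,\tT{p^{k+2}}^{\theta^3}$ should emerge.

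I expect the main obstacle to be the bookkeeping in the intermediate cases $\alpha=1,\beta=1$ and $\alpha=1,\beta\ge2$, where $\mathcal{X}(B,C)$ is neither a single coset nor the full $\gr B\gr$, so the relevant invariance group is only $\gr'$ and the orbit counts $|\gr\bs\mathcal{Y}(B,C,\va)|$ must be computed separately for $\ve_3,\ve_2,\ve_1,\vzero$ from the five families of representatives in Proposition~\ref{Calculation:prop:ggBg}. In particular, checking which of the $B_i$ send a given $\va$ into $\mu(B)\abgr = p^2\Zp^4$ (the defining condition of $\mathcal{Y}$) requires care because several families carry continuous parameters $(a,b,d,x)$ and the rank-one condition on $\bigl(\begin{smallmatrix} a & b \\ b & d\end{smallmatrix}\bigr)$. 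A secondary subtlety is ensuring that the $\theta^2$-application commutes correctly with the disjoint-union splittings, for which I would invoke Lemma~\ref{Calculation:lemma:DisjointSum} to justify that each $\tT{C,\mathcal{N}}$ splits cleanly before $\theta$ is applied. Once these multiplicities are tabulated, the remaining steps are the same $p$-power arithmetic and the substitutions from Lemma~\ref{lemma:Thetheta} that already appear in Proposition~\ref{prop:TATpnTheta}.
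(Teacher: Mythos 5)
Your proposal matches the paper's proof: the same decomposition of $\tilde{T}(B,\vzero)\tilde{T}(p^{k+2})$ via Lemma~\ref{HeisenbergLiealgebras:lemma:tTAzerotTB} into the cases of Corollary~\ref{Calculation:cor:ggBg} (with the $\alpha=\beta=0$ piece vanishing), the same orbit counting via right $\gr^A$- or $\gr'$-invariance together with Corollary~\ref{Caluculation:cor:Decomposition}, and the same final assembly by applying powers of $\theta$ and invoking (\ref{lemma:Thetheta:eq1}), (\ref{lemma:Thetheta:eq2}) and (\ref{lemma:Thetheta:eq5}) of Lemma~\ref{lemma:Thetheta}. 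The only point to adjust is that, since $\mu(B)=p^2$, the sets ${\cal Y}(B,C,\va)$ depend on $\va$ modulo $p^2\Zp^4$ rather than modulo $p\Zp^4$, so your list of representatives $\ve_3,\ve_2,\ve_1,\vzero$ must be enlarged by $p\ve_3,p\ve_2,p\ve_1$ exactly as in the paper, and it is these extra orbits that separate the $p\Zp^4$- and $p^2\Zp^4$-layers responsible for the $(p^2+p-1)$-coefficient $\theta^2$ term and the $\theta^3$ term.
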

{
\newcommand{\tmp}{{\mathcal X}(\alpha,\beta)}
\newcommand{\tmpa}[1]{{\mathcal Y}\left(\alpha,\beta,#1\right)}
\newcommand{\tmpb}{{\mathcal N}}
\begin{proof}
By Lemma \ref{HeisenbergLiealgebras:lemma:tTAzerotTB}, we have
	\[\tT{B,\vzero}\tT{p^{k+2}}
	= S_B({\cal C}_{k+4}^0) 
	+ S_B(\{pA^{k+2}\})
	+S_B(p{\cal C}_{k+2}^0) 
	+S_B(p^2{\cal C}_{k}) .\]	
Note that $S_B(\{A^{k+4}\}) = 0$, by corollary \ref{Calculation:cor:ggBg}.
Let us put \[\tmp = {\mathcal X}\left(B,C(\alpha,\beta,k+4)\right),\]
and \[\tmpa{\va}= {\mathcal Y}\left(B,C(\alpha,\beta,k+4), \va \right).\]
We shall calculate $\left|\gr\bs\tmpa{\va}\right|$.
Notice that, for all $\vb \in \Z_p^4$,
\[ \tmpa{\va}= \tmpa{\va + p^2\vb}.\] 
The calculation will be divided into four cases.

\begin{enumerate}
\item
	If $\alpha = 0,\beta \geq 1$, then
	$\tmp =
		\gr B_1(0).
	$
	Hence it is easy to see that
	\begin{equation}\label{equation:gBg1}
		S_B({\cal C}_{k+4}^0)=
				 \tT{{\cal C}_{k+4}^0, B\Zp^4}.
	\end{equation}
\item
	If $\alpha = 1,\beta = 1$, then
	\[\tmp =
        		\set{
            		\pmat{	p*	& p* 	&* 	&* \\
            				p*	&p*	&*	&* \\
            				p*	& p* 	&* 	&* \\
            				p*	&p*	&*	&* \\
            		}
		}
		\cap
		\gr B \gr.	
	\]
	Hence
	$\tmp$ is right $\gr^A$ invariant.
	By Corollary \ref{Caluculation:cor:Decomposition}, we are reduced to the case where 
	$\va \in \set{\ve_3, \ve_1,
	 p\ve_3, p\ve_1,
	\vzero}$.
	By cor \ref{Calculation:cor:ggBg},  we have
	\eqna{
		\left| \gr \bs \tmpa{\ve_3} \right| &=& 0, \\ 
		\left| \gr \bs \tmpa{\ve_1} \right| &=& 1,  \\
		\left| \gr \bs \tmpa{p\ve_3} \right| &=& p,\\
		\left| \gr \bs \tmpa{p\ve_1} \right| &=& p+p^2,\\
		\left| \gr \bs \tmpa{\vzero} \right| &=& p+p^2.\\
	}
By Lemma \ref{Caluculation:lemma:Decomposition}, $S_B(\{pA^{k+2}\})$ equals
	\eqna{\tT{pA^{k+2}, A\Zp^4-p\Zp^4}
			+  p\tT{pA^{k+2}, p\Zp^4}
			+  p^2\tT{pA^{k+2}, pA\Zp^4}.
	}
By Lemma \ref{Calculation:lemma:DisjointSum}, we have
\[\tT{pA^{k+2}, A\Zp^4-p\Zp^4} = \tT{pA^{k+2}, A\Zp^4}-\tT{pA^{k+2},p\Zp^4}.\]
By (\ref{lemma:Thetheta:eq2}) of Lemma \ref{lemma:Thetheta}, 
we have
	\begin{eqnarray}
	S_B(\{pA^{k+2}\})^{\theta}
		&=& (p^2+p-1)\tT{pA^{k+2}, p\Zp^4}^{\theta}\nonumber \\
		&+&  p^2\tT{pA^{k+2}, p^2\Zp^4}^{\theta}.
	\end{eqnarray}
\item If
	$\alpha = 1, \beta \geq 2$, then
	\[\tmp =
		\set{
            		\pmat{	p*	& * 	&* 	&* \\
            				p*	&*	&*	&* \\
            				p*	& * 	&* 	&* \\
            				p*	&*	&*	&* \\
            		}
		}
		\cap
		\gr B \gr
	.\]
	Hence
	$\tmp$ is a right
	$
		\gr'
	$
	invariant set.
	By Corollary \ref{Caluculation:cor:Decomposition}, we are reduced to the case where 
	$\va \in \set{\ve_3, \ve_2,\ve_1,
	 p\ve_3, p\ve_2,p\ve_1,
	\vzero}$.
	By Lemma \ref{Calculation:cor:ggBg}, we have
	\eqna{
	\left| \gr \bs \tmpa{\ve_3} \right| &=& 0
	,\\  
	\left| \gr \bs \tmpa{\ve_2} \right| 
	&=& 1,\\
	\left| \gr \bs \tmpa{\ve_1} \right| &=& 1
	,\\ 
	\left| \gr \bs \tmpa{p\ve_3} \right| 
	&=& p + p^2,\\
	\left| \gr \bs \tmpa{p\ve_2} \right| &=& p + p^2
	,\\
	\left| \gr \bs \tmpa{p\ve_1} \right| 
	&=& p + p^2 + p^3 ,\\
	\left| \gr \bs \tmpa{\vzero} \right| &=& p + p^2 + p^3 
	.
	}
Thus  $S_B(p{\cal C}_{k+2}^0)$ equals
	\eqna{\tT{p{\cal C}_{k+2}^0, P_1\Zp^4-p\Z_p^4}
			+  (p+p^2)\tT{p{\cal C}_{k+2}^0, p\Zp^4}
			+  p^3\tT{p{\cal C}_{k+2}^0, pP_3\Zp^4}.
	}
Proposition \ref{BasicTheory:DoubleCoset} implies
$\tT{p{\cal C}_{k+2}^0, pP_3\Zp^4} = \tT{p{\cal C}_{k+2}^0, p^2\Zp^4}$.
Thus we have
\begin{eqnarray}
		 S_B(p{\cal C}_{k+2}^0)&=& \tT{p{\cal C}_{k+2}^0, P_1\Zp^4} \nonumber\\ 
		&+&  (p^2+p-1)\tT{p{\cal C}_{k+2}^0, p\Zp^4}\nonumber\\
			&+&  p^3\tT{p{\cal C}_{k+2}^0, p^2\Zp^4}.
\end{eqnarray}
\item If
	$\alpha  \geq 2$, then
	$\tmp 
		= \gr B \gr
	$
	is naturally right $\gr$ invariant set. Since
	\eqna{
	\left| \gr \bs \tmpa{\ve_1} \right| &=& 1
	,\\  
	\left| \gr \bs \tmpa{p\ve_1} \right| 
	&=& p+p^2+p^3
	,\\
	\left| \gr \bs \tmpa{\vzero} \right| 
	&=& p+p^2+p^3 + p^4,\\
	}
$S_B(p^2{\cal C}_{k})$ equals
	\eqna{\tT{p^2{\cal C}_{k}, \Zp^4}
			+  (p^3+p^2+p-1)\tT{p^2{\cal C}_{k}, p\Zp^4}+p^4\tT{p^2{\cal C}_{k}, p^2\Zp^4}.
	}	
By (\ref{lemma:Thetheta:eq1}) of Lemma \ref{lemma:Thetheta},
\[\tT{p^2{\cal C}_{k}, \Zp^4}^{\theta^2} = p^8 \tepp \tT{p^k},\ 
\tT{p^2{\cal C}_{k}, p\Zp^4}^\theta = p^4 \tepp \tT{p^k}.\]	
Thus we have
	\begin{eqnarray}
	\label{equation:gBg2}
		S_B(p^2{\cal C}_{k})^{\theta^2}
		  &=& p^8 \tepp \tT{p^k}+p^7 \tepp \tT{p^k}^{\theta}\nonumber\\
			&+&  (p^2+p-1)\tT{p^2{\cal C}_{k}, p\Zp^4}^{\theta^2}\nonumber\\
			&+&p^4\tT{p^2{\cal C}_{k}, p^2\Zp^4}^{\theta^2}.
	\end{eqnarray}	
\end{enumerate}
\bigskip	
	
Combining (\ref{lemma:Thetheta:eq5}) of Lemma \ref{lemma:Thetheta} and (\ref{equation:gBg1})--(\ref{equation:gBg2}), we complete the proof.
\end{proof}
}
\bigskip
By (\ref{lemma:Thetheta:eq3}) and (\ref{lemma:Thetheta:eq4}) of Lemma \ref{lemma:Thetheta},
we thus have the following identity:

%
\begin{corollary}\label{cor:maincor}
For all nonnegative integers $k$,
\eqna{
& & p^9\tep T(A)^{s}\tT{p^{k+1}}^\theta 
+ p^2T(A)^{s}\tT{p^{k+3}}^{\theta^3}
\\
&=&
 p^{14}\tepp\tT{p^{k}} 
 +p^5\left(T(B)^{s} + ({p^2}+1)\tep \right)\tT{p^{k+2}}^{\theta^2} 
 +\tT{p^{k+4}}^{\theta^4}.
}
\end{corollary}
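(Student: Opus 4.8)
The plan is to substitute the two product formulas of Propositions~\ref{prop:TATpnTheta} and~\ref{prop:TBTpn} into the two sides of the asserted identity and to check that everything cancels, the genuinely noncommutative ``geometric'' terms being absorbed by Lemma~\ref{lemma:Thetheta}. Throughout I would use four elementary facts: $T(A)^s=\tilde{T}(A,\vzero)$ and $T(B)^s=\tilde{T}(B,\vzero)$ by the definition of $s$; every element of ${\rm Im}(s)$---in particular $\ep^s$, $\epp^s$, $\tilde{T}(A,\vzero)$ and $\tilde{T}(B,\vzero)$---is fixed by $\theta$ (Corollary~\ref{cor:FormulaTheta}); $\theta$ is a ring homomorphism, so it distributes over products and merely raises the $\theta$-exponent of the non-$s$ factor; and $(\ep^s)^{2}=\epp^s$, since the scalar double cosets satisfy $T(pE)T(pE)=T(p^{2}E)$.

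First I would expand the left-hand side. Proposition~\ref{prop:TATpnTheta} at index $k$, multiplied by $p^{9}\ep^s$, rewrites $p^{9}\ep^s\,T(A)^{s}\tilde{T}(p^{k+1})^{\theta}$; the same proposition at index $k+2$, followed by $\theta^{2}$ (which fixes $\tilde{T}(A,\vzero)$ and $\ep^s$ while raising the remaining $\theta$-exponents by two) and multiplied by $p^{2}$, rewrites $p^{2}\,T(A)^{s}\tilde{T}(p^{k+3})^{\theta^{3}}$. On the right-hand side I would expand $p^{5}\,T(B)^{s}\tilde{T}(p^{k+2})^{\theta^{2}}$ by Proposition~\ref{prop:TBTpn}. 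After these substitutions the ``scalar'' contributions---those proportional to $\epp^s\tilde{T}(p^{k})$, $\epp^s\tilde{T}(p^{k})^{\theta}$, $\ep^s\tilde{T}(p^{k+2})^{\theta^{2}}$, $\ep^s\tilde{T}(p^{k+2})^{\theta^{3}}$ and $\tilde{T}(p^{k+4})^{\theta^{4}}$---should match on the two sides once $(\ep^s)^{2}$ is replaced by $\epp^s$. This is a finite comparison of powers of $p$; for instance the coefficient of $\ep^s\tilde{T}(p^{k+2})^{\theta^{2}}$ is $p^{7}+p^{6}(1+p)$ on the left and $p^{5}(p^{2}+p-1)+p^{5}(p^{2}+1)$ on the right, and both equal $2p^{7}+p^{6}$.

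What then remains are exactly the terms carrying the double-coset sums $\tilde{T}({\cal C}^{0}_{\bullet},P_{1}\Zp^{4})$, $\tilde{T}(p{\cal C}^{0}_{\bullet},P_{1}\Zp^{4})$ and $\tilde{T}({\cal C}^{0}_{\bullet},B\Zp^{4})$, and this is precisely the part that Lemma~\ref{lemma:Thetheta} is built to handle. Concretely, the leftover $p^{2}\tilde{T}({\cal C}_{k+4}^{0},P_{1}\Zp^{4})^{\theta^{3}}$ coming from the $A$-expansion should be identified with the right-hand term $p^{5}\tilde{T}({\cal C}_{k+4}^{0},B\Zp^{4})^{\theta^{2}}$ by applying $\theta^{2}$ to~(\ref{lemma:Thetheta:eq3}), while the leftover $p^{9}\ep^s\,\tilde{T}({\cal C}_{k+2}^{0},P_{1}\Zp^{4})^{\theta}$ should be identified with $p^{5}\tilde{T}(p{\cal C}_{k+2}^{0},P_{1}\Zp^{4})^{\theta^{2}}$ by applying $\theta$ to~(\ref{lemma:Thetheta:eq4}). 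I expect the main obstacle to be this last matching: one must keep the subscript of ${\cal C}^{0}$, the power of $p$, and the $\theta$-exponent aligned at the same time, and an off-by-two slip in the index of ${\cal C}^{0}$ (the increment caused by the scalar factor $\ep^s$) is the easiest way to break the cancellation. Once these geometric terms are seen to cancel in pairs, assembling them with the already-matched scalar terms gives the stated equality.
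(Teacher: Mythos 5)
Your proposal is correct and is precisely the paper's one-line proof made explicit: substitute Proposition \ref{prop:TATpnTheta} at indices $k$ and $k+2$ (the latter composed with $\theta^2$, which fixes $\tilde{T}(A,\vzero)$ since it lies in ${\rm Im}(s)$) together with Proposition \ref{prop:TBTpn}, cancel the scalar terms using $(T(pE)^s)^2=T(p^2E)^s$ exactly as in your sample coefficient check, and absorb the remaining geometric terms via $\theta^2$ applied to (\ref{lemma:Thetheta:eq3}) and $\theta$ applied to (\ref{lemma:Thetheta:eq4}). The off-by-two hazard you flag is real but sits in the paper's own text rather than in your argument: the right-hand side of (\ref{lemma:Thetheta:eq4}) must read ${\cal C}^0_{k+2}$ rather than the printed ${\cal C}^0_{k+4}$ (as the grading by $v_p(\mu(\cdot))$ forces), and that corrected form is exactly the one your matching uses.
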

\bigskip

The above corollary implies the main thoerem. 

\begin{theorem}\label{thm:main}
Problem $\ref{myprob}$ holds for $n=2$. Namely,
put
\eqna{
q_1 &=& -T(A),\\ 
q_2 &=& pT(B) + p(p^2+1)\ep,\\ 
q_3 &=& -p^3\ep T(A),\\ 
q_4 &=& p^6\epp,\\ 
g(X) &=& 1 - p^2 \ep X^2,
}
then we have

\begin{eqnarray}
\label{equation:main}
g^s(Y)
&=& \tP_2^{\theta^{4}}(X) 
+ q^s_1Y \tP_2^{\theta^{3}}(X) \nonumber \\
&+& q^s_2Y^2 \tP_2^{\theta^{2}}(X)
+ q^s_3Y^3 \tP_2^{\theta}(X)
+ q^s_4Y^4 \tP_2(X),
\end{eqnarray}
where $Y = p^2X$.
\end{theorem}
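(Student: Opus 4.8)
The plan is to prove (\ref{equation:main}) by expanding both sides as formal power series in $X$ and matching coefficients degree by degree. Since $Y = p^2 X$ and both $s$ and $\theta$ are ring homomorphisms acting termwise on $\tP_2(X) = \sum_{j \geq 0} \tT{p^j}\, X^j$, the $k$-th summand on the right-hand side expands as $q^s_k Y^k \tP_2^{\theta^{4-k}}(X) = p^{2k} q^s_k \sum_{j \geq 0} \tT{p^j}^{\theta^{4-k}} X^{j+k}$. Collecting these, the coefficient of $X^m$ on the right-hand side is
\[
c_m = \tT{p^m}^{\theta^4} + \sum_{k=1}^{4} p^{2k}\, q^s_k\, \tT{p^{m-k}}^{\theta^{4-k}},
\]
with the convention $\tT{p^j} = 0$ for $j < 0$. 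On the left, $g^s(Y) = 1 - p^2 \tep Y^2 = 1 - p^6 \tep X^2$, whose $X^m$-coefficient equals $1$ at $m = 0$, $-p^6 \tep$ at $m = 2$, and $0$ otherwise.

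First I would dispose of the low-degree range $0 \leq m \leq 3$. Here I would invoke the proposition asserting that Equality (\ref{equation:general:main}) holds modulo $X^{4}$: this is exactly the statement that $c_m$ agrees with the $X^m$-coefficient of $g^s(Y)$ for $m \leq 3$, and it rests on the rationality theorem for $GSp_4$ (Theorem \ref{theorem:rationality}, case $n=2$) together with the injectivity of $s$. No additional computation is needed in this range.

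The substantive range is $m \geq 4$, where I must show $c_m = 0$ in order to match the vanishing coefficient of $g^s(Y)$. Writing $k = m - 4 \geq 0$ and substituting the explicit values $q_1 = -T(A)$, $q_2 = pT(B) + p(p^2+1)\ep$, $q_3 = -p^3 \ep T(A)$, $q_4 = p^6 \epp$, the relation $c_{k+4} = 0$ becomes, after collecting the powers of $p$,
\[
p^9 \tep\, T(A)^s\, \tT{p^{k+1}}^\theta + p^2\, T(A)^s\, \tT{p^{k+3}}^{\theta^3} = p^{14} \tepp\, \tT{p^k} + p^5 \bigl(T(B)^s + (p^2+1)\tep\bigr)\tT{p^{k+2}}^{\theta^2} + \tT{p^{k+4}}^{\theta^4},
\]
which is precisely Corollary \ref{cor:maincor}. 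Hence $c_m = 0$ for every $m \geq 4$, and combined with the low-degree case the two sides of (\ref{equation:main}) agree coefficientwise, proving the theorem.

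Since Corollary \ref{cor:maincor} already absorbs all the representative-counting performed in Propositions \ref{prop:TATpnTheta} and \ref{prop:TBTpn} via the decomposition lemmas, the argument at this final stage is essentially bookkeeping. The one point that demands care is the power-of-$p$ accounting: one must verify that the factors $p^{2k}$ arising from $Y^k = p^{2k} X^k$, combined with the powers of $p$ already built into the coefficients $q_k$, reproduce exactly the exponents $2$, $5$, $9$, and $14$ appearing in Corollary \ref{cor:maincor}. I expect this matching, rather than any genuinely new computation, to be the only delicate step.
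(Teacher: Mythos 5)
Your proposal is correct and is essentially the paper's own proof: the paper likewise reduces the identity to a coefficientwise statement, handles the coefficients of $X^m$ for $0 \leq m \leq 3$ by the proposition that Equality (\ref{equation:general:main}) holds modulo $X^{2^n}$ (resting on Theorem \ref{theorem:rationality} and the injectivity of $s$), and observes that the vanishing of the coefficient of $X^{k+4}$ is exactly the identity of Corollary \ref{cor:maincor}. Your power-of-$p$ bookkeeping is also accurate: the factors $p^{2j}$ from $Y^j$ combined with the $p$-powers in $q_1,\dots,q_4$ yield precisely the exponents $2$, $5$, $9$, $14$ of the corollary.
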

\medskip

\begin{remark}
Our theorem recovers Shimura's rationality via the morphism $\phi$.
\end{remark}
}

%

\textsc{Department of Health Informatics, Faculty of Health and Welfare Services Administration, Kawasaki University of Medical Welfare, Kurashiki, 701-0193, Japan}

{\it Email address}: \texttt{fumitake.hyodo@mw.kawasaki-m.ac.jp}

\end{document}